\pgfplotsset{compat=1.14}
\definecolor{midnight}{RGB}{44, 62, 80}
\definecolor{asbestos}{RGB}{127, 140, 141}
\definecolor{clouds}{RGB}{236, 240, 241}
\definecolor{auroragreen}{RGB}{80, 184, 103}
\definecolor{electron}{RGB}{9, 132, 227}
\definecolor{cornflower}{RGB}{84, 109, 229}  
\newcommand{\cc}[1]{\mathcal{#1}}  
\newcommand{\cb}[1]{\mathbb{#1}}  
\newcommand{\csf}[1]{\textsf{#1}}  
\newcommand{\csmc}[1]{\textsc{#1}}  
\newcommand{\ctt}[1]{\texttt{#1}}  
\newcommand{\nf}[1]{{\normalfont #1}}  
\newcommand{\eg}{\emph{e.g.}\xspace}  
\newcommand{\ie}{\emph{i.e.}\xspace}  
\newcommand{\wrt}{{\emph{w.r.t.}}\xspace}  
\newcommand{\card}[1]{\vert #1 \vert}  
\newcommand{\U}{V}  
\newcommand{\pow}[1]{\mathbf{2}^{#1}}  
\renewcommand{\setminus}{\smallsetminus}
\renewcommand{\max}{{\normalfont \csf{max}_{\subseteq}}}  
\renewcommand{\min}{{\normalfont \csf{min}_{\subseteq}}}  
\renewcommand{\P}{\textbf{\csf{P}}}  
\newcommand{\NP}{\textbf{\csf{NP}}}  
\newcommand{\poly}{\csf{poly}}  
\newcommand{\A}{\cc{A}}  
\newcommand{\B}{\cc{B}}  
\newcommand{\Bp}{\B^+}  
\newcommand{\Bm}{\B^-}  
\newcommand{\cl}{\phi}  
\newcommand{\cs}{\cc{C}} 
\newcommand{\is}{\Sigma}  
\newcommand{\Ext}{{\normalfont \csf{Ext}}}  
\newcommand{\M}{\cc{M}}  
\DeclareMathOperator{\ftr}{\uparrow}  
\DeclareMathOperator{\idl}{\downarrow}  
\DeclareMathOperator{\imp}{\rightarrow}  
\theoremstyle{plain}
\newtheorem{theorem}{Theorem}  
\newtheorem{proposition}{Proposition}  
\newtheorem{corollary}{Corollary}  
\newtheorem{lemma}{Lemma}  
\newtheorem{definition}{Definition}  
\theoremstyle{definition}
\theoremstyle{remark}
\newtheorem{example}{Example}  
\newtheorem{remark}{Remark}  
\newtcolorbox{problem}[1]{
	colframe=clouds,
	titlerule style=\color{black},
	colback=white,
	fonttitle=\color{black},
	arc=1mm,
	enhanced,
	attach boxed title to top left={xshift=0.5cm, yshift=-3.7mm},
	boxed title style={colframe=white},
	title=\csmc{#1},
	colbacktitle=white,
}
\newtcolorbox{emphasize}{
	colback=clouds,
	colframe=clouds,
	arc=0mm,
	left=0mm,
	right=0mm,
	top=0mm,
	bottom=0mm,
	before skip=5mm,
	after skip=5mm	
}
\newcounter{question}
\newenvironment{question}
{\refstepcounter{question}\par\medskip\noindent	\textbf{Question~\thequestion.} \it}
{\medskip}
\newcommand{\Problem}[3]{
	\begin{problem}{#1}
		\begin{tabular}{l p{0.7\textwidth}}
			\textit{Input: \hspace{0.7em}} & #2 \\ 
			\textit{Output: \hspace{0.7em}} & #3 \\	
		\end{tabular}
	\end{problem}
}
\title{Hierarchical decompositions of implicational bases for the enumeration of 
meet-irreducible elements}
\author[1]{Lhouari Nourine}
\author[1, 2]{Simon Vilmin}
\affil[1]{Universit\'e Clermont-Auvergne, CNRS, Mines de Saint-\'Etienne, 
	Clermont-Auvergne-INP, LIMOS, 63000 Clermont-Ferrand, France.}
\affil[2]{Universit\'e de Lorraine, CNRS, LORIA, F-54000, France.}
\begin{document}
	
\maketitle

\begin{abstract}
	We are interested in the problem of translating between two representations 
	of closure systems, namely implicational bases and meet-irreducible elements.
	Albeit its importance, the problem is open.
	Motivated by this problem, we introduce \emph{splits} of an implicational 
	base.
	It is a partitioning operation of the implications which we apply recursively 
	to obtain a binary tree representing a decomposition of the implicational base.
	We show that this decomposition can be conducted in polynomial time and space 
	in the size of the input implicational base.
	In order to use our decomposition for the translation task, we focus on the 
	case of \emph{acyclic} splits.
 	In this case, we obtain a recursive characterization of the meet-irreducible elements 
 	of the associated closure system.
	We use this characterization and hypergraph dualization to derive new results 
	for the translation problem in acyclic convex geometries.
	
	\paragraph{Keywords:} closure systems, implicational bases, meet-irreducible elements, hypergraph dualization, characteristic models.
\end{abstract}

\section{Introduction}
\label{sec:trad:intro}

Finite closure systems over a (finite) ground set are set systems containing the 
ground set and closed under intersection.
When ordered by inclusion, they are also known as (closure) lattices 
\cite{davey2002introduction, gratzer2011lattice}.
These structures are well-known in mathematics and computer science.
They show up in Knowledge Space Theory (KST) \cite{doignon2012knowledge}, database 
theory \cite{demetrovics1992functional, mannila1992design}, 
propositional logic \cite{kautz1993reasoning, khardon1995translating}, Formal 
Concept Analysis (FCA) \cite{ganter2012formal}, or
argumentation frameworks \cite{dung1995acceptability, elaroussi2021lattice} for 
example.

Albeit ubiquitous, closure systems suffer from their size, which can be 
exponential in the size of their ground set.
For this reason, numerous research works have been conducted over the last decades 
to 
construct space efficient representations of lattices 
\cite{ausiello1986minimal, bertet2010multiple, ganter2012formal, 
guigues1986familles, 	
	habib2018representation, khardon1995translating, mannila1992design, 
	markowsky1975factorization, wild1994theory}.
The surveys \cite{bertet2018lattices, wild2017joy} are also recent witnesses of 
the 
importance and the relevance 
of compactly representing closure systems.

Among all possible representations, there are two prominent candidates:  
\emph{implications} and \emph{meet-irreducible elements}.
An implication is a mathematical expression $A \imp B$, where $A$ and $B$ are 
subsets of 
the ground set, modeling a causality relation between $A$ and $B$ in the closure 
system: \textit{``if a set includes $A$, it must also include $B$''}.
Every closure system $\cs$ over some ground set $\U$ can be represented by a set 
$\is$ of 
implications called an \emph{implicational base}.
Dually, every set of implications gives birth to a closure system 
\cite{wild1994theory}.
As several implicational bases can represent the same closure system, numerous 
bases 
with \textit{``good''} properties have been studied.
Among them, the Duquenne-Guigues base \cite{guigues1986familles} being minimum 
(the least 
number of implications) or the canonical direct base \cite{bertet2010multiple} 
have 
attracted much attention.
More recently, Adaricheva \textit{et al.} \cite{adaricheva2014implicational, 
	adaricheva2017discovery, adaricheva2013ordered} have proposed refinements of 
the canonical direct base such as the $D$-base and the $E$-base.
Because of their simple nature, implications have been used under different shapes 
and 
names such as functional dependencies in databases 
\cite{demetrovics1992functional, 
	mannila1992design}, Horn functions in propositional logic 
	\cite{kautz1993reasoning, 
	khardon1995translating}, queries in KST \cite{doignon2012knowledge} or 
	attribute 
implications in FCA \cite{ganter2012formal, guigues1986familles} for 
instance.
A second way to compactly represent a closure system $\cs$ is its family of 
meet-irreducible elements $\M$. 
It is the unique minimal collection of sets from which the whole closure system 
can be recovered by taking intersections.
In Horn logic, meet-irreducible elements are called \emph{characteristic 
	models} \cite{khardon1995translating, kautz1993reasoning} for they 
completely identify a given Horn function. 
Moreover, they appear in the poset of irreducibles 
\cite{habib2018representation, markowsky1975factorization}, in the 
Armstrong relations in databases \cite{mannila1992design}, in the 
base of knowledge spaces \cite{doignon2012knowledge} or in 
the reduced context of FCA \cite{ganter2012formal}.

In this paper, we study the problem of translating between these two 
representations.
This problem is twofold. 
Either it asks to list the meet-irreducible elements of a closure system given by 
an 
implicational base, or vice-versa, to construct an implicational base from a set 
of 
meet-irreducible elements.
Due to the polyvalence of closure systems and their representations, the translation task appears in disguise in the areas mentioned above.
For example in databases, it connects with the question of finding functional dependencies in data, or to the problem of designing Armstrong relations for given dependencies \cite{mannila1992design}.
Another example stems from Knowledge Space Theory \cite{doignon2012knowledge}, where enumerating meet-irreducible relates to the enumeration of the base of a knowledge space from a family of queries.
Similar connections arise from propositional logic \cite{kautz1993reasoning,khardon1995translating} with the listing of characteristic models given a Horn CNF (and vice-versa), or from FCA \cite{ganter2012formal,guigues1986familles} when one seeks to describe a context by means of a small number of attribute implications.

In fact, the choice of the representation impacts the complexity of several problems, thus making the translation a crucial task.
For example, it is \NP-complete to decide whether an element belongs to a minimal 
generator of a 
closure system if the latter is given by an implicational base 
\cite{lucchesi1978candidate}.
When the closure system is represented by its meet-irreducible elements, we can 
answer the question in polynomial time \cite{bertet2018lattices}.
The complexity of recognizing a class of closure systems also depends on the 
representation.
For instance, it takes polynomial time to recognize convex geometries and 
join-semidistributivity from meet-irreducible elements \cite{edelman1985theory, 
nation2000unbounded}, but the complexity of the task given an implicational base depends 
on the implications at hand.
If the $D$-base is given, a recent paper of Adaricheva \emph{et al.}
\cite{adaricheva2022notes} shows that recognizing both classes can be done in polynomial 
time.
However, if one considers an arbitrary implicational base, these recognition problems 
become \csf{co}\NP-complete \cite{bichoupan2022complexity}.
Another example where the representation matters comes from propositional logic 
\cite{kautz1993reasoning}, where abductive reasoning can be conducted in 
polynomial time 
from meet-irreducible elements, while it is \NP-complete with implications.

Translating is also important to enjoy the most compact representation for a given 
closure system.
Indeed, implicational bases and meet-irreducible elements are generally much 
shorter than the closure systems they represent.
However, when comparing the two representations, there are cases where an 
implicational 
base has size exponential in the number of meet-irreducible elements, or dually, 
where 
the number of meet-irreducible elements can be exponential in the size of an 
implicational base \cite{kuznetsov2004intractability, mannila1994algorithms}.

\paragraph{Known results.}
We now review the principal results on the translation task.
It has attracted much attention during the last decades 
\cite{adaricheva2017discovery, babin2013computing, beaudou2017algorithms, 
	khardon1995translating, mannila1992design, wild1995computations}.
The surveys \cite{bertet2018lattices, wild2017joy} provide a detailed account of 
all the 
progresses made on this question.
Since the size of the output can be exponential in the size of the input, we 
express the
complexity results in terms of the combined size of the input and the output.
This is \emph{output-sensitive} complexity \cite{johnson1988generating}.

For completeness we discuss four representations for a closure system: 
implications, 
meet-irreducible elements, the closure system itself or the closure operator.
The closure operator is seen as a black-box oracle returning the smallest closed 
set including a given set.
We explain each direction of Figure \ref{fig:trad:hard-trad}, which summarizes 
hardness 
results about the translation task.
Numbers in the figure refer to the following explanations.

\begin{figure}[ht!]
	\centering 
	\includegraphics[scale=0.85, page=1]{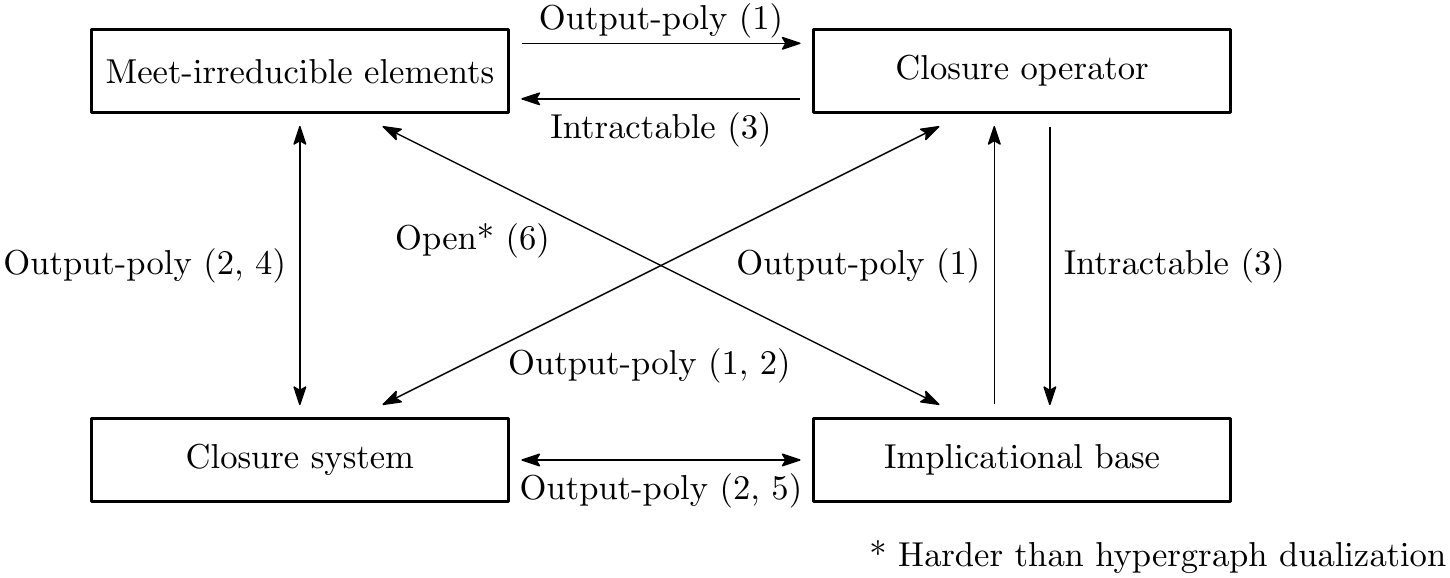}%
	\caption{The complexity of translating between the representations of a 
	closure 
		system.}
	\label{fig:trad:hard-trad}
\end{figure}

\emph{(1) From any representation to the closure operator}.
The closure operation can be simulated in polynomial-time from any other 
representation of the closure system, using intersections and the closure 
algorithm 
(or the forward chaining) \cite{ganter2012formal}.

\emph{(2) From any representation to the closure system}.
The whole closure system can be constructed in output-polynomial time from any 
other representation, with the help of well-known algorithms such as 
\ctt{NextClosure} \cite{ganter2012formal}.

\emph{(3) From the closure operator to meet-irreducible elements and 
implications}.
Lawler \textit{et al.} prove in \cite{lawler1980generating} that meet-irreducible 
elements or 
implications cannot be enumerated in output-polynomial time unless $\P = \NP$ from 
a 
closure oracle.

\emph{(4) From the closure system to its meet-irreducible elements}. 
It is sufficient to perform a traversal of the closed sets, and check for the 
meet-irreducible property.
This is done in (output)-polynomial time.

\emph{(5) From the closure system to an implicational base}.
To find a (minimum) implicational base, it is for instance possible to use the 
attribute-incremental approach of Duquenne and Obiedkov 
\cite{obiedkov2007attribute} 
in output-polynomial time.

\emph{(6) From an implicational base to meet-irreducible elements and vice-versa}.
Remark that undertaking the construction of the whole closure system as an 
intermediate 
will necessarily produce output-exponential time algorithms in the worst case.
In the landmark paper \cite{khardon1995translating}, written in the framework of 
Horn 
logic, these problems are called \csmc{CCM} for \emph{Computing Characteristic 
Models} 
and \csmc{SID} for \emph{Structure Identification}.
We keep these names for historicity.

\Problem{Meet-irreducible elements enumeration (CCM)}
{An implicational base $\is$ of a closure system $\cs$ over $\U$.}
{The meet-irreducible elements $\M$ of $\cs$.}

\Problem{Minimum implicational base identification (SID)}
{The family $\M$ of meet-irreducible elements of a closure system $\cs$ over $\U$.}
{A minimum implicational base $\is$ corresponding to $\cs$.}

In \cite{khardon1995translating}, the author considers right-optimum 
implicational bases 
(minimizing the right-hand sides of implications) 
and shows that both directions of the translation are equivalent.
Whether this equivalence also holds for minimum implicational bases is not clear 
as going 
from right-optimum to minimum is much easier than the other way around
\cite{ausiello1986minimal, shock1986computing}.
In any case, the task is already harder than enumerating the maximal independent 
sets of 
a hypergraph \cite{khardon1995translating}.
This latter problem, also known as \emph{hypergraph dualization}, is a famous open 
problem \cite{eiter1995identifying, fredman1996complexity}.
The best known algorithm for this task is the one of Fredman 
and Khachiyan \cite{fredman1996complexity}, running in output quasi-polynomial 
time.
Babin and Kuznetsov prove in \cite{babin2013computing} that it is 
\csf{co}\NP-complete to decide whether an implication belongs to a minimum 
implicational 
base from the meet-irreducible elements.
In \cite{kavvadias2000generating}, the authors state that co-atoms of a closure 
system 
cannot be enumerated in output-polynomial time unless $\P = \NP$.
In \cite{distel2011complexity}, it is shown that the minimal pseudo-closed sets of 
the 
Duquenne-Guigues basis cannot be enumerated in output-polynomial time unless $\P = 
\NP$ 
either.
More recently \cite{defrain2021translating}, it has been shown that \csmc{CCM} and 
\csmc{SID} are harder than hypergraph dualization, even in acyclic convex 
geometries.
In spite of these hardness results, the complexity of translating between 
meet-irreducible elements and implications remains unsettled.

On the positive side, finding the canonical direct base from the meet-irreducible 
elements (and vice-versa) is equivalent to hypergraph dualization
\cite{bertet2018lattices, bertet2010multiple, khardon1995translating}.
Adaricheva et al. \cite{adaricheva2017discovery} obtain similar results 
for the 
$D$-base.
More generally, exponential time algorithms have been designed, see \eg 
\cite{ganter2012formal, obiedkov2007attribute, mannila1992design, 
wild1995computations}.
In \cite{wild2000optimal}, Wild shows that \csmc{SID} can be solved in polynomial 
time in 
modular lattices.
The authors in \cite{beaudou2017algorithms} devise output-polynomial time 
algorithms for both \csmc{CCM} and \csmc{SID} in $k$-meet-semidistributive 
lattices.
Finally, it has been proved \cite{defrain2021translating} that \csmc{CCM} and 
\csmc{SID} 
are polynomially equivalent to hypergraph dualization in the class of \emph{ranked 
convex 
	geometries}.

\paragraph{Contributions and outline} 
We are mostly interested in the problem \csmc{CCM} in the class of acyclic convex 
geometries.
Convex geometries form an ubiquitous class of closure systems.
They arise from several mathematical objects such as graphs, hypergraphs, ordered sets, or points in the plane \cite{edelman1985theory, farber1986convexity, kashiwabara2010characterizations, korte2012greedoids}.
In particular, acyclic convex geometries are well-studied closure systems
\cite{adaricheva2013ordered, hammer1995quasi, wild1994theory, zanuttini2015proprietes}, lying in the 
	intersection of convex geometries and lower-bounded closure systems 
	\cite{adaricheva2014implicational, freese1995free}.
In acyclic convex geometries, \csmc{CCM} and \csmc{SID} are harder than hypergraph dualization \cite{defrain2021translating}.
However, they also contain distributive closure systems, in which the translation can be solved efficiently.
As a consequence, acyclic convex geometries are an important class of systems to study in order to better understand the complexity of the translation task.
Our contribution is a step towards this direction.
By means of implicational bases, we seek to shed the light on the structure of closure systems, particularly acyclic ones, with respect to the problem of enumerating the meet-irreducible elements.

Let $\is$ be an implicational base for some closure system $\cs$ over $\U$.
We start with some preliminary definitions in Section \ref{sec:trad:prelim}.
Then, we give the following results:
\begin{enumerate}
	\item We introduce a partitioning operation of an implicational base called a 
	\emph{split}, inspired by \cite{dasgupta2016cost, libkin1993direct}.
	We use this operation to hierarchically decompose $\is$ and its associated 
	closure 
	system $\cs$.
	This part is detailed in Section \ref{sec:trad:split}.
	
	\item Section \ref{sec:trad:acyclic} is devoted to \emph{acyclic splits}:
	\begin{itemize}
		\item[(1)] We characterize $\cs$ with respect to this partitioning 
		operation, see 
		Subsection 	\ref{subsec:trad:acyc-split-cs}.
		
		\item[(2)] We derive a recursive characterization of the set of
		meet-irreducible elements $\M$ associated to $\cs$, see Subsection 
		\ref{subsec:trad:acyc-split-meet}.
		
		\item[(3)] We devise an algorithm solving \csmc{CCM} in the presence of 
		acyclic 
		splits.
		We highlight cases where this procedure performs in output-quasipolynomial 
		time 
		using the algorithm of Fredman and Khachiyan \cite{fredman1996complexity} 
		for 
		hypergraph dualization.
		This result includes ranked convex geometries as a particular case.
		This is Subsection \ref{subsec:trad:trad}.	
	\end{itemize}
\end{enumerate}

The paper gathers results communicated at the 21st conference ICTCS (for Section 
\ref{sec:trad:split}) and the 8th workshop FCA4AI (for Section 
\ref{sec:trad:acyclic}), 
without published proceedings.

\section{Preliminaries}
\label{sec:trad:prelim}

All the objects considered in this paper are \emph{finite}.
For more definitions about closure systems and implications, we refer the reader 
to 
\cite{bertet2018lattices}.
If $\U$ is a set, we refer to $\pow{\U}$ as the family of all subsets of $\U$.
Sometimes, and mostly in examples, we shall write the subset $\{u_1, \dots, u_k\}$ 
of 
$\U$ as the concatenation of its elements, that is $u_1 \dots u_k$.
The size of a set $\U' \subseteq \U$ is denoted by $\card{\U'}$.
Let $\cc{S}$ be a family of subsets of $\U$.
We say that $\cc{S}$ is \emph{simple} or an \emph{antichain} if for every $S_1, 
S_2 \in 
\cs$, $S_1 \nsubseteq S_2$.
Let $\U' \subseteq \U$.
The \emph{trace} of $\cc{S}$ on $\U'$, denoted by $\cc{S} \colon \U'$, is obtained 
by 
intersecting each element of $\cc{S}$ with $\U'$, that is $\cc{S} \colon \U' = \{S 
\cap 
\U' \mid S \in \cc{S} \}$.

\paragraph{Closure systems, closure operators}
Let $\U$ be a set.
A \emph{closure system} over $\U$ is a family $\cs$ of subsets of $\U$ such that 
$\U \in 
\cs$ and $C_1 \cap C_2 \in \cs$ for every $C_1, C_2 \in \cs$.
The sets in $\cs$ are called \emph{closed (sets)}.
When ordered by inclusion, the pair $(\cs, \subseteq)$ is a \emph{(closure) 
lattice}.
In this paper, we always assume that a closure system is equipped with this order.
Hence, we write $\cs$ to denote the lattice $(\cs, 
\subseteq)$.
Let $C_1, C_2 \in \cs$.
We say that $C_1$ and $C_2$ are \emph{comparable} if $C_1 \subseteq C_2$ or $C_2 
\subseteq C_1$.
We write $C_1 \subset C_2$ if $C_1 \subseteq C_2$ but $C_1 \neq C_2$.
We say that $C_2$ \emph{covers} $C_1$, denoted by $C_1 \prec C_2$, if $C_1 \subset 
C_2$ 
and there is no closed set $C \in \cs$ such that $C_1 \subset C \subset C_2$.
In this case, $C_2$ is a \emph{successor} of $C_1$ and $C_1$ a \emph{predecessor} 
of 
$C_2$.
Let $C \in \cs$.
The \emph{ideal} of $C$ in $\cs$, denoted $\idl C$ contains all the closed subsets 
of 
$C$, \ie $\idl C= \{C' \in \cs \mid C' \subseteq C\}$.
The \emph{filter} $\ftr C$ of $C$ in $\cs$ is defined dually with the closed 
supersets of 
$C$.
If $\cs'$ is a subset of $\cs$, the ideal of $\cs'$ is $\idl \cs' = \bigcup_{C' 
\in \cs'} 
\idl C'$ and its filter is $\ftr \cs' = \bigcup_{C' \in \cs'} \ftr C'$. 
A closed set $M$ is \emph{meet-irreducible} if $M = C_1 \cap C_2$ with $C_1, C_2 
\in \cs$ 
implies $M = C_1$ or $M = C_2$.
The set of meet-irreducible elements of $\cs$ is denoted $\M(\cs)$ or simply $\M$ 
when 
clear from the context.
The whole closure system can be recovered by taking the intersections of every 
combinations of meet-irreducible elements.
For a given closed set $C$, we put $\M(C) = \{M \in \M \mid C \subseteq M\}$.
We have $C = \bigcap \M(C)$.

Closure systems are closely related to closure operators.
A mapping $\cl \colon \pow{\U} \to \pow{\U}$ is a \emph{closure operator} if for 
every 
$X, Y \subseteq \U$, $X \subseteq \cl(X)$ ($\cl$ is extensive), $X \subseteq Y$ 
implies 
that $\cl(X) \subseteq \cl(Y)$ ($\cl$ is monotone) and $\cl(\cl(X)) = \cl(X)$ 
($\cl$ is 
idempotent).
The family $\cs = \{\cl(X) \mid X \subseteq \U \} = \{X \subseteq \U \mid \cl(X) = 
X\}$ 
is a closure system. 
Similarly, every closure system $\cs$ induces a closure operator $\cl$ defined by 
$\cl(X) 
= \bigcap \{C \in \cs \mid X \subseteq C\}$ for every $X \subseteq \U$.
Note that since $\cs$ is closed by intersection, we also have that $\cl(X) = 
\min(\{C \in 
\cs \mid X \subseteq C \})$.
Thus, the correspondence between closure operators and closure systems is 
one-to-one.

Let $\cs$ be a closure system over $\U$ with associated closure operator $\cl$.
We say that $\cs$ is \emph{standard} if for every $u \in \U$, $\cl(u) \setminus 
\{u\}$ is 
closed.
In particular, $\emptyset$ is closed.
In this paper, all the closure systems are considered standard, a common 
assumption 
\cite{adaricheva2013ordered, wild2017joy}.

A standard closure system $\cs$ over $\U$ is \emph{Boolean} if $\cs = \pow{\U}$.
It is \emph{distributive} if $C_1 \cup C_2 \in \cs$ for every pair of closed sets 
$C_1, 
C_2$.
Let $\cs_1, \cs_2$ be two closure systems over disjoint $\U_1, \U_2$ (resp.).
The \emph{direct product} of $\cs_1$ and $\cs_2$ is defined by $\cs_1 \times \cs_ 
2 = \{ 
C_1 \cup C_2 \mid C_1 \in \cs_1, C_2 \in \cs_2\}$.

\paragraph{Implicational bases}
An \emph{implication} over $\U$ is an expression $A \imp B$ where $A$ and $B$ are 
subsets 
of $\U$.
In $A \imp B$, $A$ is the \emph{premise} and $B$ the \emph{conclusion}.
An \emph{implicational base} $\is$ over $\U$ is a family of implications (over 
$\U$).
The size $\card{\is}$ of $\is$ is the number of implications it contains.
A subset $C$ of $\U$ \emph{satisfies} or \emph{models} an implicational base $\is$ 
if for 
every $A \imp B \in \is$, $A \subseteq C$ implies that $B \subseteq C$.
It is known \cite{bertet2018lattices, wild2017joy} that the family $\cs = \{C 
\subseteq 
\U \mid C \text{ satisfies } \is \}$ is a closure system.
Its associated closure operator $\cl$ can be computed with the closure procedure 
(or the 
forward chaining) \cite{ganter2012formal}.
For a given $X \subseteq \U$, this procedure starts from $X$ and constructs a 
sequence $X 
= X_0 
\subseteq \dots \subseteq X_k = \cl(X)$ of subsets of 
$\U$ such that for every $1 \leq i \leq k$, $X_i = X_{i - 1} \cup \bigcup \{B \mid 
\exists A \imp B \in \is \text{ such that } A \subseteq X_{i - 1} \}$.
The routine stops when $X_{i - 1} = X_i$. 

Dually, every closure system $\cs$ can be represented by at least one 
implicational base $\is$ \cite{wild2017joy}.
An implication $A \imp B$ holds in a closure system $\cs$ if all the closed sets 
of $\cs$ 
are models of $A \imp B$.
Equivalently, $A \imp B$ holds in $\cs$ if $B \subseteq \cl(A)$.
Two implicational bases are \emph{equivalent} if they represent the same closure 
system.
In particular, an implicational base $\is$ is equivalent to its 
\emph{unit-expansion} 
$\is_u = \{A \imp b \mid A \imp B \in \is, b \in B\}$.
We will interchangeably use an implicational base or its unit-expansion.

\begin{remark}
	As we restrict our attention to standard closure systems, we consider that an 
	implicational base $\is$ has no implications of the form $\emptyset \imp B$ 
	for some 
	$B 
	\subseteq \U$.
\end{remark}

Let $\is$ be an implicational base over $\U$.
The \emph{restriction} of $\is$ to a subset $\U'$ of $\U$ is the implicational 
base 
$\is[\U'] = \{A \imp b \in \is \mid A \cup \{b\} \subseteq \U' \}$.
Then, $\is[\U']$ is a \emph{sub-base} of $\is$.
Let $\U_1, \U_2$ be a non-trivial (full) bipartition of $\U$, that is $\U_1 \cup 
\U_2 = 
\U$, $\U_1 \cap \U_2 = \emptyset$ and $\U_1 \neq \emptyset$, $\U_2 \neq \emptyset$.
An implicational base is \emph{bipartite} (\wrt $\U_1, \U_2$) if every implication 
$A 
\imp 
B$ satisfies $A \subseteq \U_1$ and $B \subseteq \U_2$ or vice-versa.
We write $\is[\U_1, \U_2]$ to denote a bipartite implicational base.
A \emph{path} in $\is$ is a sequence $v_1, \dots, v_k$ of elements of $\U$ such 
that for 
every $1 \leq i < k$ there exists an implication $A_i \imp B_i$ with $v_i \in A_i$ 
and 
$v_{i + 1} \in B_i$.
The path is a \emph{cycle} when $v_1 = v_k$.
An implicational base without cycles is called \emph{acyclic}.
A closure system which admits an acyclic implicational base is an \emph{acyclic 
	convex geometry} \cite{edelman1985theory}. 
Acyclic convex geometries are also known as $G$-geometries \cite{wild1994theory} 
or poset 
type convex geometries \cite{adaricheva2014implicational}.
The term acyclic comes from Horn logic and acyclic Horn formulas 
\cite{hammer1995quasi, 
	zanuttini2015proprietes}.

Directed hypergraphs \cite{ausiello1986minimal} are a convenient 
representation for (unit-expansions of) implicational bases.
A \emph{directed hypergraph} $\cc{D}$ (over $\U$) is a pair $(\U, \A)$ where $\A$ 
is a 
set of hyperarcs.
A hyperarc is a pair $(A, b)$ where $A \cup \{b\} \subseteq \U$, $A$ is the 
\emph{body} 
and $b$ the \emph{head} of the hyperarc.
A hyperarc can be used to model an implication $A \imp b$ in the unit-expansion of 
an 
implicational base $\is$.

\begin{example} \label{ex:intro}
	Let $\U = \{1, 2, 3, 4\}$ and $\is = \{12 \imp 3, 23 \imp 4, 4 \imp 1 \}$. 
	The sequence $1, 3, 4$ is a cycle in $\is$.
	We represent $\is$ and its associated closure system $\cs$ in Figure 
	\ref{fig:intro}.
	The meet-irreducible elements of $\cs$ are $2, 14, 13$ and $134$.
	
	\begin{figure}[h!]
		\centering 
		\includegraphics[scale=1.0, page=1]{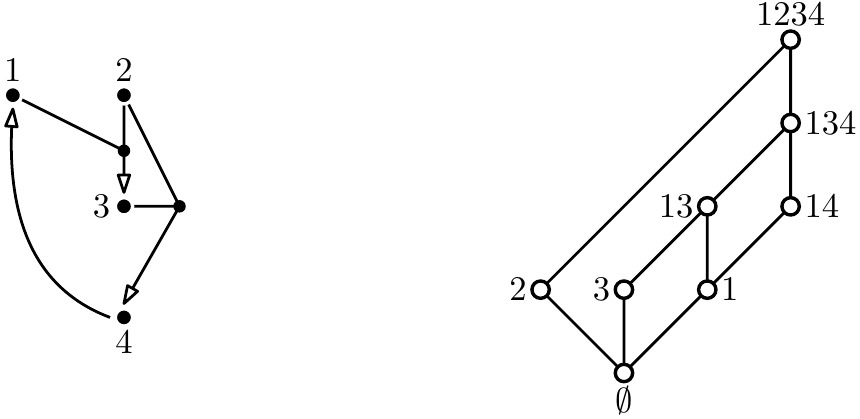}%
		\caption{On the left, the (associated directed hypergraph of the) 
		implicational 
		base 
			$\is$ in Example \ref{ex:intro}. On the right, its associated closure 
			system 
			$\cs$.}
		\label{fig:intro}
	\end{figure}
	
\end{example}

\paragraph{Enumeration complexity} We conclude with a brief reminder on 
enumeration 
algorithms \cite{johnson1988generating}.
Let $\ctt{A}$ be an algorithm with input $x$ of size $n$ and output a set of 
solutions 
$R(x)$ with $m$ elements.
In our case, each solution in $R(x)$ has size $\poly(n)$.
We say that $\ctt{A}$ is running in \emph{output-polynomial} time if its execution 
time 
is bounded by $\poly(m + n)$.
If the execution time of $\ctt{A}$ is instead bounded by $(n + m)^{\log(n + m)}$, 
\ctt{A} is said to run in \emph{output-quasipolynomial} time.

\section{Splits and hierarchical decomposition of implicational bases}
\label{sec:trad:split}

Inspired by \cite{dasgupta2016cost, libkin1993direct}, we define the \emph{split} 
operation for an implicational base $\is$ over $\U$.
A split is a bipartition ($\U_1, \U_2)$ of the groundset $\U$ which 
\emph{completely} 
partitions the implications of $\is$ in three sub-bases:
\begin{itemize}
	\item $\is[\U_1]$: the implications of $\is$ fully contained in $\U_1$,
	\item $\is[\U_2]$: the implications of $\is$ fully contained in $\U_2$,
	\item $\is[\U_1, \U_2]$: the implications of $\is$ whose premises are 
	included in 
	$\U_1$ and their conclusions in $\U_2$, or vice-versa.
\end{itemize}
This partitioning operation can be conducted recursively and leads to a 
\emph{hierarchical decomposition} (\emph{H-decomposition}) of $\is$, represented 
by a 
full rooted binary tree.
The root of the tree is labelled by $\is[\U_1, \U_2]$, its left-child corresponds 
to a 
decomposition of $\is[\U_1]$, its right-child to a decomposition of $\is[\U_2]$.
This tree is called a \emph{$\is$-tree}.
We illustrate the structure of a $\is$-tree in Figure \ref{fig:trad:over-tree}.

\begin{figure}[ht!]
	\centering
	\includegraphics[scale=1.0, page=1]{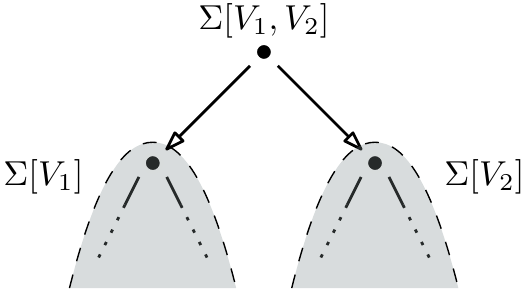}%
	\caption{A bipartition of $\is$ by a split.}
	\label{fig:trad:over-tree}
\end{figure}

We characterize the implicational bases having a hierarchical decomposition into 
trivial bases.
Moreover, we give a polynomial time and space algorithm, \ctt{BuildTree}, which 
takes an implicational base $\is$ as input, and outputs a $\is$-tree if it exists. 
Afterwards, we relax the requirement of the H-decomposition into trivial bases to 
H-factors, which are indecomposable sub-bases of $\is$.

Finally, we consider the decomposition of $\cs$, when a split $(\U_1, 
\U_2)$ of $\is$ is given. 
We show that $\cs$ is obtained by combining closed sets of $\cs_1$, the closure 
system of 
$\is[\U_1]$, with closed sets of $\cs_2$, the closure system of $\is[\U_2]$.
The way $\cs_1$ and $\cs_2$ are combined depends on the implications in $\is[\U_1, 
\U_2]$.

\subsection{Split operation}

Our first step is to define the split operation. 

\begin{definition} \label{def:trad:split}
	Let $\is$ be an implicational base over $\U$.
	A \emph{split} of $\is$ is a non-trivial bipartition $(\U_1, \U_2)$ of $\U$ 
	such that 
	for 
	every $A \imp b \in \is$, $A \subseteq \U_1$ or $A \subseteq \U_2$.
\end{definition}

A split $(\U_1, \U_2)$ induces three sub-bases $\is[\U_1]$, $\is[\U_2]$ and a 
bipartite base $\is[\U_1,\U_2]$. 
Moreover, every implication of $\is$ belongs to exactly one of $\is[\U_1]$, 
$\is[\U_2]$ 
or $\is[\U_1, \U_2]$ (recall that $\is$ has no implications $\emptyset \imp b$).
Intuitively, the split shows that $\is$ is fully described by two smaller 
distincts 
bases $\is[\U_1]$ and $\is[\U_2]$ acting on each other through the bipartite 
implicational base $\is[\U_1,\U_2]$. 

\begin{example} \label{ex:trad:split}
	Let $\U = \{1, 2, 3, 4, 5, 6, 7\}$ and consider the implicational base $\is$ 
	with 
	implications $12 \imp 3, 3 \imp 1, 56 \imp 2, 23 \imp 7, 45 \imp 6$ and $5 
	\imp 7$.
	Figure \ref{fig:trad:split-is} represents $\is$.
	
	\begin{figure}[ht!]
		\centering 
		\includegraphics[scale=1.0, page=1]{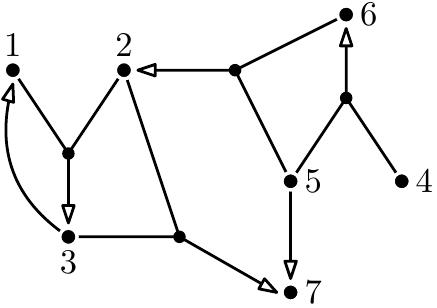}
		\caption{The implicational base of Example \ref{ex:trad:split}.}
		\label{fig:trad:split-is}
	\end{figure}
	
	In Figure \ref{fig:trad:split-bip} we consider two possible bipartitions of 
	$\U$.
	The bipartition illustrated on the left separates $\U$ in two sets $\U_1 = 
	\{1, 3\}$ 
	and 
	$\U_2 = \{2, 4, 5, 6, 7\}$. 
	It is not a split since the premises of $12 \imp 3$ and $23 \imp 7$ intersect 
	both 
	$\U_1$ 
	and $\U_2$.
	The bipartition on the right puts $\U_1=\{1, 2, 3\}$ and $\U_2=\{4, 5, 6, 7\}$.
	It is a split with $\is[\U_1] = \{12 \imp 3, 3 \imp 1\}$, $\is[\U_2] = \{45 
	\imp 6, 5 
	\imp 7\}$, and $\is[\U_1,\U_2] = \{56 \imp 2, 23 \imp 7\}$. 
	\begin{figure}[ht!]
		\centering 
		\includegraphics[scale=0.9, page=2]{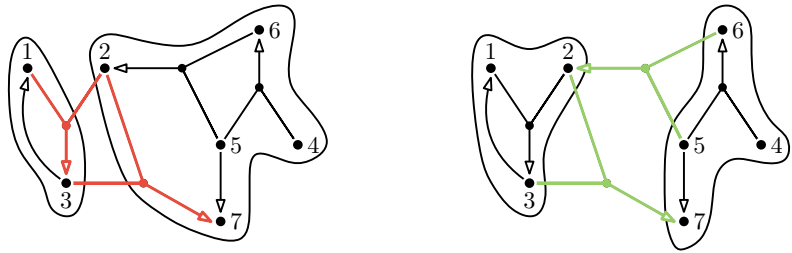}
		\caption{Two bipartitions of $\U$, the left one is not a split of $\is$, 
		the 
			right 
			one is.}
		\label{fig:trad:split-bip}
	\end{figure}
\end{example}

Before giving a characterization of implicational bases having a split, we make 
two 
observations.
First, $\is$ is empty or contains only implications of the form $a \imp b$. 
In this case, every non-trivial bipartition of $\U$---every cut of the associated 
directed (hyper)graph---is a split. 
In fact, an implication of the form $a \imp b$ always satisfies the condition of 
Definition \ref{def:trad:split}.
Thus, these implications have no impact on the existence of a split.
Second, there may be implicational bases where no bipartition corresponds to 
a split, as shown by the next example.

\begin{example} \label{ex:trad:fail-split}
	Consider $\U = \{1, 2, 3\}$ and the implicational base $\is = \{12 \imp 3, 
	\allowbreak 13 \imp 2\}$.
	Here, none of the three possible bipartitions is a split:
	\begin{itemize}
		\item $\U_1 = \{1, 2\}$ and $\U_2 = \{3\}$ fails to separate the 
		implication $13 
		\imp 
		2$;
		\item $\U_1 = \{1, 3\}$, $\U_2 = \{2\}$ omits the implication $12 \imp 3$; 
		and
		\item $\U_1 = \{2, 3\}$, $\U_2 = \{1\}$ breaks the two implications of 
		$\is$.
	\end{itemize}
\end{example}

In the following, we show that the implicational base's connectivity is important 
for 
the notion of a split. 
Let $\is$ be an implicational base over $\U$.
A \emph{premise-path} in $\is$ is a sequence $v_1, \dots, v_k$ of (distinct) 
elements of 
$\U$ such that for every $1 \leq i < k$ there exists an implication $A_i \imp b_i$ 
in 
$\is$ such that $\{v_i, v_{i+1}\} \subseteq A_i$.
Two vertices $u, v\in \U$ are said to be 
\emph{premise-connected} in $\is$ if there exists a premise-path from $u$ to $v$.
We say that $\is$ is \emph{premise-connected} when every pair of vertices in $\U$ 
is 
premise-connected. 
A subset $C$ of $\U$ is a \emph{premise-connected component} of $\is$ if there 
exists a 
premise-path between each pair of vertices of $C$, and if $C$ is inclusion-wise 
maximal 
for this property.
A singleton premise-connected component of $\is$ is \emph{trivial}.

\begin{example} 
	Consider the implicational base $\is$ given in Example 
	\ref{ex:trad:split}.
	For instance, $6, 5, 4$ is a premise-path and hence $4$ and 
	$6$ are premise-connected.
	Here $\is$ is not premise-connected as there is no premise-path 
	between $2$ and $6$.
	The premise-connected components of $\is$ are $\{1, 2, 3\}$, $\{4, 5, 6\}$ and 
	$\{7\}$ being trivial.
\end{example}

Using premise-connectivity, we are now in position to identify whether a given 
implicational base admits a split or not.

\begin{proposition} \label{prop:trad:premise-connectivity}
	An implicational base $\is$ over $\U$ has a split if and only if it is not 
	premise-connected.
\end{proposition}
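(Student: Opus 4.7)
The plan is to prove both implications of the equivalence, and the argument in each direction is essentially a one-step translation between the bipartition condition of Definition \ref{def:trad:split} and the length-two segments that make up a premise-path.

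For the forward direction, I would assume that $(\U_1, \U_2)$ is a split of $\is$ and show that any $u \in \U_1$ and $v \in \U_2$ (both sets being non-empty because the bipartition is non-trivial) are not premise-connected. Suppose, for contradiction, that there were a premise-path $u = v_1, \dots, v_k = v$. Since $v_1 \in \U_1$ and $v_k \in \U_2$, some index $i$ must satisfy $v_i \in \U_1$ and $v_{i+1} \in \U_2$ (or vice versa). By the definition of a premise-path there exists an implication $A_i \imp b_i \in \is$ with $\{v_i, v_{i+1}\} \subseteq A_i$, so $A_i$ is contained in neither $\U_1$ nor $\U_2$, contradicting the split property. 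Hence $u$ and $v$ lie in different premise-connected components and $\is$ is not premise-connected.

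For the backward direction, I would assume $\is$ is not premise-connected. Since premise-connectivity is an equivalence relation on $\U$ (each vertex is premise-connected to itself via the trivial path, and symmetry and transitivity follow by path reversal and concatenation), the premise-connected components partition $\U$, and not being premise-connected means there are at least two such components. Pick any such component $C$ and set $\U_1 := C$, $\U_2 := \U \setminus C$; both are non-empty, so this is a non-trivial bipartition. To verify the split property, I would argue by contradiction: if some $A \imp b \in \is$ had $A \cap \U_1 \neq \emptyset$ and $A \cap \U_2 \neq \emptyset$, then in particular $|A| \geq 2$ and I could choose distinct $u \in A \cap \U_1$ and $v \in A \cap \U_2$. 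But then $u, v$ is a premise-path of length two (witnessed by the implication $A \imp b$), so $u$ and $v$ are premise-connected, contradicting $u \in C$ and $v \notin C$. Therefore $(\U_1, \U_2)$ is a split.

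No step looks like a genuine obstacle: the argument is symmetric between the two directions, and both reduce to the observation that a pair of elements sharing a premise yields a premise-path of length two while a premise-path of length two forces its endpoints to share a premise. The only minor point worth stating carefully is that when $A$ is a singleton the split condition is trivially satisfied, so in the backward direction the problematic case is exactly $|A| \geq 2$, which is what lets me pick two distinct elements of $A$ to build the path.
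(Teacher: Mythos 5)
Your proposal is correct and follows essentially the same route as the paper's proof: the forward direction extracts a crossing implication from a hypothetical premise-path between the two sides of the split, and the backward direction takes a premise-connected component as one side of the bipartition and derives a contradiction with maximality (which you phrase via the equivalence-relation structure of premise-connectivity, but it is the same argument). No gap.
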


\begin{proof}
	We begin with the only if part. 
	Suppose that $\is$ has a split $(\U_1, \U_2)$, and let $u \in \U_1$ and $v \in 
	\U_2$.
	Since a split is a non-trivial bipartition of $\U$, such $u$ and $v$ must 
	exist.
	Now let us assume for contradiction there exists a premise-path $u = v_1, 
	\dots, v_k 
	= v$ 
	for some $k \in \cb{N}$.
	Such a premise-path exists if there is some $j$ with $1 \leq j \leq k$ such 
	that $A_j 
	\imp b_j$ is an implication of $\is$, $A_j \cap \U_1 \neq \emptyset$ and $A_j 
	\cap 
	\U_2 \neq \emptyset$.
	However, the implication $A_j \imp b_j$ does not satisfy Definition 
	\ref{def:trad:split}.
	This contradicts the assumption that $(\U_1, \U_2)$ is a split of $\is$. 
	Hence, $u, v$ cannot be premise-connected and $\is$ is not premise-connected 
	either.
	
	We move to the if part.
	Suppose that $\is$ is not premise-connected and let $C$ be a premise-connected 
	component 
	of $\is$. 
	We show that $(C, \U \setminus C)$ is a split of $\is$. 
	Let $A \imp b$ be an implication in $\is$.
	If $A \subseteq C$ or $A$ is a singleton element, it is clear that it 
	satisfies 
	Definition \ref{def:trad:split}.
	Assume that $A \nsubseteq C$ and that $A$ is not a singleton element.
	Recall that no implication of the form $\emptyset \imp b$ lies in $\is$.
	Let $u, v$ be distinct elements in $A$ and assume for contradiction $u \in C$ 
	and $v 
	\notin C$.
	Clearly, $u, v$ is a premise path between $u$ and $v$.
	Let $w$ be any element of $C$.
	Since $u \in C$, $u$ and $w$ are premise connected.
	Consider any premise-path from $w$ to $u$ and append $v$ to its end.
	The new path is a premise-path connecting $w$ and $v$.
	Hence, $C \cup \{v\}$ is premise-connected, a contradiction with the fact that 
	$C$ is 
	maximal.
	We deduce that $A \nsubseteq C$ implies that $A \cap C = \emptyset$.
	So $(C,\U\setminus C)$ is indeed a split of $\is$.
\end{proof} 

It is important to note that premise-connectivity is not inherited. 
That is, a sub-base induced by a premise-connected component needs not be 
premise-connected in general. 

\begin{example}
	Consider the implicational base of Example \ref{ex:trad:split} with the split 
	$ \U_1 
	= \{1, 2, 3\}$, $\U_2 = \{4, 5, 6, 7\}$.
	The elements $5$ and $6$ are premise-connected in $\is$ but not in $\is[\U_2] 
	= \{5 
	\imp 
	7, \allowbreak 45 \imp 6\}$.
	This happens because the implication $56 \imp 2$ is in $\is[\U_1, \U_2]$.
\end{example}

Henceforth, premise-connected components of an implicational base may be further 
decomposed.
Consequently, the split operation can be conducted in a recursive manner, leading 
to a 
hierarchical decomposition of implicational bases, up to trivial cases.

\subsection{The decomposition tree of an implicational base}

Based on the split operation, we define a hierarchical decomposition of an 
implicational 
base $\is$.
We call it a \emph{H-decomposition} of $\is$.
The strategy is to recursively split $\is$ into smaller implicational bases until 
we 
reach trivial cases.
This recursive decomposition can be conveniently represented by a full rooted 
binary 
tree $T$ (full means that each node has precisely two children). 
An interior node of the tree corresponds to a split $(\U_1,\U_2)$ of\ $\is$ whose 
children 
are H-decompositions of $\is[\U_1]$ and $\is[\U_2]$.
The leaves of the tree represent the ground set $\U$.
Since the splits $(\U_1,\U_2)$ and $(\U_2,\U_1)$ are equivalent, the children of a 
node 
are unordered.

\begin{definition}[$\is$-tree and H-decomposition] \label{def:trad:is-tree}
	Let $\is$ be an implicational base over $\U$ and $T$ be a full rooted binary 
	tree. 
	Then $(T, \lambda)$ is a $\is$-tree of $\is$ if there exists a labelling map 
	$\lambda 
	\colon T \rightarrow \U \cup 2^{\is}$ 
	satisfying the following conditions:
	\begin{enumerate}
		\item $\lambda(t)$ equals $v$ for some $v \in \U$ if $t$ is a 
		leaf of $T$;
		
		\item $\lambda(t) \subseteq \is$ if $t$ is an interior node (possibly 
		$\lambda(t) = \emptyset$);
		
		\item for every $A \imp b \in \lambda(t)$, elements of $A$ are labels of 
		leaves 
		in the subtree of one child of $t$ and $b$ is the label of a leaf in the 
		subtree 
		of the other child. 
		
		\item the set $\{\lambda(t) \mid t \in T\}$ is a full partition of \ $\U 
		\cup 
		\is$ and may contain the empty set. 
	\end{enumerate}
	If such labelling exists, we say that $\is$ is \emph{hierarchically 
	decomposable}
	(\emph{H-decomposable} for short), and \emph{H-indecomposable} otherwise.
\end{definition}

In the particular case where $\U = \emptyset$, we must have that $\is = \emptyset$.
If it happens, we say for convenience that $\is$ is trivially H-decomposable and 
that its 
$\is$-tree is empty.

\begin{example}
	The implicational base $\is$ from Example \ref{ex:trad:split} is 
	H-decomposable. 
	In Figure \ref{fig:trad:is-tree}, we represent a possible $\is$-tree for 
	$\is$. 
	
	\begin{figure}[ht!]	
		\centering 
		\includegraphics[scale=1.0, page=1]{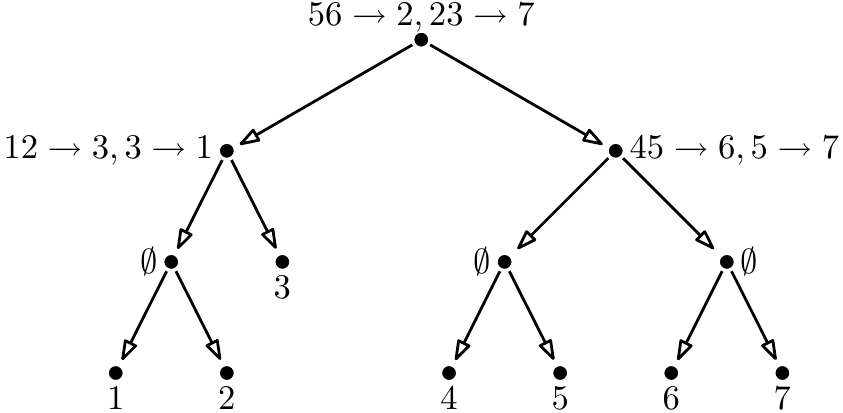}%
		\caption{An $\is$-tree for the implicational base of Example 
		\ref{ex:trad:split}.}
		\label{fig:trad:is-tree}
	\end{figure}
	
\end{example}

There are cases where a H-decomposition can be computed easily.
For instance, if $\is$ is empty, every full rooted binary tree whose 
leaves are labelled by a permutation of $\U$ and every interior node by 
$\emptyset$ is 
a $\is$-tree.
The case where $\is$ only contains implications of the form $a \imp b$ for some 
$a, b \in 
\U$ behaves similarly, except that the interior nodes of the tree contain the 
implications of $\is$.
However, there are also some implicational bases that cannot be H-decomposed, for 
example when they admit no split at all.

Thus, our objective is to characterize H-decomposable implicational bases and 
devise a polynomial-time algorithm to build decomposition trees whenever possible.
We first need two preparatory propositions.

\begin{proposition} \label{prop:trad:H-decomposition}
	A H-decomposable implicational base $\is$ is not premise-connected.
\end{proposition}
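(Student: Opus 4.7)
The plan is to reduce Proposition~2 to Proposition~1 by showing that any H-decomposable implicational base admits a split; since Proposition~1 already equates the existence of a split to the failure of premise-connectivity, this will give the conclusion. The natural candidate for such a split is the one induced by the root of a $\is$-tree, so the crux will be verifying that the partition of $\U$ read off the two subtrees of the root really satisfies Definition~\ref{def:trad:split}.

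First I would unpack the setup. Assume $\is$ is H-decomposable and fix a $\is$-tree $(T,\lambda)$. The trivial case where $\card{\U} \leq 1$ can be dispatched immediately: with at most one element there are no implications in $\is$ to speak of, and the statement reduces to showing that a split exists, which we may handle by remark. Otherwise, since the leaves of $T$ are labelled bijectively by elements of $\U$ (condition~4) and $T$ is full, the root $r$ has two children $r_1,r_2$, each of whose subtree contains at least one leaf. Let $\U_1$ (resp.\ $\U_2$) be the set of labels of leaves in the subtree rooted at $r_1$ (resp.\ $r_2$). Condition~4 ensures $(\U_1,\U_2)$ is a non-trivial bipartition of $\U$.

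The key step is then to check that $(\U_1,\U_2)$ is a split of $\is$, i.e., that for every $A \imp b \in \is$ the premise $A$ lies entirely in $\U_1$ or entirely in $\U_2$. By the full-partition condition~4, the implication $A \imp b$ appears in $\lambda(t)$ for exactly one interior node $t$. I would split on the location of $t$. If $t = r$, condition~3 says that $A$ is contained in the leaves of one child's subtree and $b$ in those of the other, so $A \subseteq \U_1$ or $A \subseteq \U_2$ directly. If $t$ lies strictly inside the subtree of $r_1$ (the $r_2$ case is symmetric), then the entire subtree of $t$ is contained in the subtree of $r_1$, so all leaves reachable from $t$ are labelled in $\U_1$; applying condition~3 at $t$ to the elements of $A$ (and to $b$) then places $A \subseteq \U_1$. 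Thus $(\U_1,\U_2)$ is a split.

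Finally, invoking Proposition~\ref{prop:trad:premise-connectivity} on this split immediately yields that $\is$ is not premise-connected. I do not expect any substantial obstacle in this argument; the only mild subtlety is being careful to use condition~3 both at $t = r$ and at deeper nodes, so that premises sitting at internal labels far below the root are still correctly placed on one side of the root-level bipartition. Everything else is bookkeeping on the binary tree.
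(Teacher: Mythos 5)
Your proposal is correct and follows essentially the same route as the paper: both read the bipartition $(\U_1,\U_2)$ off the two subtrees of the root of a $\is$-tree and then invoke Proposition~\ref{prop:trad:premise-connectivity}. The only difference is that you explicitly verify (via conditions~3 and~4 of Definition~\ref{def:trad:is-tree}) that this bipartition is a split, a step the paper's proof leaves implicit.
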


\begin{proof}
Suppose that $\is$ is H-decomposable, and let $(T, \lambda)$ be a $\is$-tree with root $r$. 
Let $(\U_1, \U_2)$ be the split of $\U$ corresponding to $r$, \ie $\U_1$ 
corresponds 
to the leaves of the left subtree of $r$ and $\U_2$ to those of the right 
subtree. 
Then, according to Proposition \ref{prop:trad:premise-connectivity}, $\is$ is 
not 
premise-connected.
\end{proof} 

Remark that the converse of Proposition \ref{prop:trad:H-decomposition} does not 
hold in 
general.
We exhibit a counter-example.
The main idea is to hide a premise-connected implicational base into a sub-base of 
a non 
premise-connected one.

\begin{example}
	Let $\U = \{1, 2, 3, 4\}$ and $\is = \{12 \imp 3, 13 \imp 2, 23 \imp 4 \}$.
	The implicational base $\is$ has a unique split, $\U_1 = \{1, 2, 3\}$ and 
	$\U_2 = 
	\{4\}$.
	Thus it is not premise-connected and any possible $\is$-tree must have the 
	split 
	$(\U_1, 
	\U_2)$ in the label of its root.
	After splitting, we are left with the sub-bases $\is[\U_2] = \emptyset$, 
	$\is[\U_1, 
	\U_2] 
	= \{23 \imp 4\}$ and $\is[\U_1] = \{12 \imp 3, 13 \imp 2\}$.
	Observe that $\is[\U_1]$ is exactly the implicational base of Example 
	\ref{ex:trad:fail-split}.
	Hence, it is premise-connected and using Proposition 
	\ref{prop:trad:H-decomposition}, 
	it cannot be H-decomposed.
	It follows that $\is$ admits no H-decomposition either.
\end{example}

Inspired by the previous example, we show that H-decomposability is hereditary, 
\ie if an 
implicational base $\is$ has a $\is$-tree then each of its sub-bases has a 
H-decomposition too.

\begin{proposition}\label{prop:trad:hereditary}
	Let $\is$ be an implicational base over $\U$ and let $X \subseteq \U$.
	Then $\is$ has a H-decomposition only if $\is[X]$ is H-decomposable.
\end{proposition}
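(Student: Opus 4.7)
The plan is to prove the statement by induction on the number of leaves of the $\is$-tree $(T,\lambda)$ witnessing that $\is$ is H-decomposable (equivalently, on $|\U|$). The intuitive picture is that the tree for $\is[X]$ is obtained from $(T,\lambda)$ by ``restricting'' it to $X$: keep only leaves labelled by elements of $X$, keep in each interior label only the implications that actually lie in $\is[X]$, and contract away any interior node that ends up with a single surviving subtree so that the result is still a full rooted binary tree.

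For the base case, $T$ has a single leaf, so $\U=\{v\}$ and $\is=\emptyset$. Then $\is[X]$ is either empty (if $v\notin X$, so $X=\emptyset$, trivially H-decomposable by the convention stated after Definition \ref{def:trad:is-tree}) or equal to $\is$ with $X=\{v\}$, in which case the single-leaf tree labelled by $v$ works.

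For the inductive step, let $r$ be the root of $T$ and let $(\U_1,\U_2)$ be the split of $\U$ at $r$, so that the two subtrees of $r$ are, up to the restriction imposed by $\lambda$, a $\is[\U_1]$-tree and a $\is[\U_2]$-tree. Set $X_1=X\cap\U_1$ and $X_2=X\cap\U_2$. Applying the induction hypothesis to these two smaller $\is$-trees, $\is[\U_1][X_1]=\is[X_1]$ and $\is[\U_2][X_2]=\is[X_2]$ are H-decomposable; let $T_1$ and $T_2$ be the corresponding trees. If both $X_1$ and $X_2$ are non-empty, I combine $T_1$ and $T_2$ under a new root whose label is $\lambda(r)\cap\is[X]$. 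Every implication $A\imp b$ in this new root label lies in $\lambda(r)\subseteq\is[\U_1,\U_2]$ and satisfies $A\cup\{b\}\subseteq X$, hence $A\subseteq X_1$ and $b\in X_2$ (or vice versa), so condition 3 of Definition \ref{def:trad:is-tree} is respected. If instead one of $X_1,X_2$ is empty, say $X_2=\emptyset$, then no implication of $\lambda(r)$ (or of $\is[\U_2]$) belongs to $\is[X]$, because each such implication has endpoints on both sides of $(\U_1,\U_2)$; therefore $\is[X]=\is[X_1]$ and $T_1$ already certifies H-decomposability. The case $X=\emptyset$ is trivial.

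The only delicate point is the ``full partition'' condition (item 4 of Definition \ref{def:trad:is-tree}): I need to check that in the constructed tree the set of labels exactly covers $X\cup\is[X]$ without overlap. This follows by writing $\is[X]$ as the disjoint union $(\is[\U_1,\U_2]\cap\is[X])\sqcup\is[X_1]\sqcup\is[X_2]$, matching the partition induced by $r$ in $T$; the three pieces are placed respectively at the new root and (by induction) in $T_1$ and $T_2$. The leaf labels partition $X=X_1\sqcup X_2$ by induction. I expect this bookkeeping — and the need to allow empty labels at the root when $\lambda(r)\cap\is[X]=\emptyset$, which is explicitly permitted by the definition — to be the main technical obstacle rather than any genuine structural difficulty.
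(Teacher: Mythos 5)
Your proposal is correct and follows essentially the same route as the paper: the paper prunes the given $\is$-tree top-down by intersecting each interior label with $\is[X]$ and contracting any node one of whose subtrees misses $X$, which is exactly the construction you carry out recursively by induction on the tree. Your version merely adds the explicit verification of conditions 3 and 4, which the paper leaves implicit.
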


\begin{proof}
	Let $\is$ be an implicational base over $\U$, $X \subseteq \U$, and 
	let $(T, \lambda)$ be a $\is$-tree.
	If $X = \emptyset$, then the result trivially holds.
	We construct a subtree not necessarily induced by $T$ which corresponds to a 
	$\is[X]$-tree. 
	We start from the root $r$ of $T$ and apply the following operation for each 
	interior 
	node $t$: if the sets of leaves of the left child and those of the right one 
	both 
	intersect $X$, keep $t$ with label $\lambda(t)= \lambda(t) \cap \is[X]$. 
	Otherwise, there is a child of $t$ whose set of leaves do not intersect $X$.
	In this case replace $t$ by the child whose set of leaves intersects $X$.
	In the resulting subtree, the leaves are labelled by the elements of $X$, and the internal nodes by the implications of $\is[X]$.
\end{proof}

The following theorem characterizes H-decomposability and gives the strategy of an 
algorithm computing a H-decomposition.

\begin{theorem} \label{thm:trad:H-decomposability}
	Let $\is$ be a non premise-connected implicational base and let $C$ be a 
	premise-connected component of $\is$.
	Then $\is$ is H-decomposable if and only if $\is[C]$ and $\is[\U \setminus C]$ 
	are 
	H-decomposable.
\end{theorem}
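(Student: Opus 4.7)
The plan is to prove both directions separately, with the forward direction following immediately from an earlier result and the backward direction being a constructive combination.

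For the forward direction, I would simply invoke Proposition \ref{prop:trad:hereditary} twice, once with $X = C$ and once with $X = \U \setminus C$. Since H-decomposability passes to sub-bases induced by any subset of $\U$, both $\is[C]$ and $\is[\U \setminus C]$ inherit a H-decomposition from the $\is$-tree of $\is$.

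For the backward direction, the plan is to glue the two given sub-trees together under a fresh root. Since $\is$ is not premise-connected and $C$ is a premise-connected component, the proof of Proposition \ref{prop:trad:premise-connectivity} shows that $(C, \U \setminus C)$ is a split of $\is$, so the implications of $\is$ partition into $\is[C]$, $\is[\U \setminus C]$ and the bipartite part $\is[C, \U \setminus C]$. Let $(T_1, \lambda_1)$ and $(T_2, \lambda_2)$ be a $\is[C]$-tree and $\is[\U \setminus C]$-tree respectively, guaranteed by assumption. I would build $T$ by taking a new root $r$ having $T_1$ and $T_2$ as its two subtrees, and define $\lambda$ to agree with $\lambda_1$ and $\lambda_2$ on their respective nodes, while setting $\lambda(r) = \is[C, \U \setminus C]$.

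The verification of the four conditions of Definition \ref{def:trad:is-tree} is then routine. Conditions 1 and 2 are inherited from the two sub-trees together with the fact that $\is[C, \U \setminus C] \subseteq \is$. Condition 3 holds at every interior node of $T_1$ or $T_2$ by the induction hypothesis, and at the new root it follows from the definition of a split: every implication in $\is[C, \U \setminus C]$ has its premise entirely on one side of $(C, \U \setminus C)$ and its head on the other, matching the leaf sets of the two subtrees of $r$. Condition 4 holds because the leaves of $T_1$ and $T_2$ together form a partition of $C \cup (\U \setminus C) = \U$, and the labels of all interior nodes together realize the partition $\is[C] \uplus \is[\U \setminus C] \uplus \is[C, \U \setminus C] = \is$.

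The only mildly delicate step is ensuring that $(C, \U \setminus C)$ really is a split in the sense of Definition \ref{def:trad:split}, so that the partition of implications used at the root is exhaustive. This is exactly what the if-part of Proposition \ref{prop:trad:premise-connectivity} provides when applied to the premise-connected component $C$; no further work is needed. Beyond this, the argument is a straightforward bookkeeping exercise, and I would not expect any real obstacle.
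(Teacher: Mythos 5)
Your proposal is correct and follows essentially the same route as the paper: the forward direction by applying Proposition \ref{prop:trad:hereditary}, and the backward direction by grafting the two given trees under a fresh root labelled with the bipartite implications $\is \setminus (\is[C] \cup \is[\U \setminus C])$, with condition \emph{3} at the root justified by the fact that $(C, \U \setminus C)$ is a split arising from a premise-connected component. The only cosmetic difference is that the paper verifies condition \emph{3} by arguing directly that $A \cap C \neq \emptyset$ forces $A \subseteq C$ and $b \in \U \setminus C$ (and dually), which is the same reasoning you compress into the appeal to Proposition \ref{prop:trad:premise-connectivity}.
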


\begin{proof}
The only if part directly follows from Proposition \ref{prop:trad:hereditary}.
Let us show the if part.
Let $C$ be a premise-connected component of $\is$, $(T_1, \lambda_1)$ be a 
$\is[C]$-tree and $(T_2, \lambda_2)$ be a $\is[\U \setminus C]$-tree. 
We consider a new tree $(T, \lambda)$ such that $T$ has root $r$ with left 
subtree 
$T_1$ and right subtree $T_2$.
As for $\lambda$, we put $\lambda(t_1) = \lambda_1(t_1)$ if $t_1 \in T_1$, 
$\lambda(t_2) 
= \lambda_2(t_2)$ if $t_2 \in T_2$ and $\lambda(r) = \is \setminus (\is[C] 
\cup 
\is[\U 
\setminus C])$. 
In other words, $\lambda(r)$ contains each implication whose premise is not fully 
contained 
in 
$C$ or $\U \setminus C$.
It is clear that conditions \emph{1, 2, 4} of Definition 
\ref{def:trad:is-tree} are fulfilled for $(T, \lambda)$ as they are for 
$(T_1, \lambda_1)$, $(T_2, \lambda_2)$ and $C \cup \U \setminus C = \U$. 
Hence, we have to check \emph{3}. 
Let $A \imp b$ be an implication in $\lambda(v)$.
If $A \cap C \neq \emptyset$, then $A \subseteq C$ since $C$ is a 
premise-connected 
component of $\is$.
As $A \imp b$ is not an implication of $\is[C]$, it follows that $b \in \U 
\setminus 
C$.
Dually, if $A \cap C = \emptyset$, then $b \in C$ since $A \imp b$ is not in 
$\is[\U 
\setminus C]$.
Consequently, condition \emph{3} is satisfied and $(T, \lambda)$ is a 
$\is$-tree 
as 
required.
\end{proof}

Theorem \ref{thm:trad:H-decomposability} suggests a recursive algorithm which 
returns a 
$\is$-tree for an implicational base $\is$ if it is H-decomposable. 
If $\U = \emptyset$, we simply output $\emptyset$.
If $\U$ is a singleton element $v$, we output a leaf with label $v$.
Otherwise, we compute a premise-connected component $C$ of $\is$ if $\is$ is not 
premise-connected.
We label the corresponding node by the implications of $\is[C, \U \setminus C]$, 
and we recursively call the algorithm on $\is[C]$ and $\is[\U \setminus C]$. 
This strategy is formalized in Algorithm \ref{alg:trad:build-tree}, whose 
correctness and 
complexity are studied in Theorem \ref{thm:trad:build-tree}. 

\begin{algorithm}
	\KwIn{An implicational base $\is$ over $\U$}
	\KwOut{A $\is$-tree, if it exists, \csf{FAIL} otherwise}
	
	\If{$\U = \emptyset$}{
		return $\emptyset$ \;
	}
	
	\If{$\U$ has one vertex $v$}{
		create a new leaf $r$ with appropriated $\lambda(r)$\; 
		return $r$ \;
	}\Else{
		compute a premise-connected component $C$ of $\is$ \;
		\If{$\card{C} =\card{\U}$}{
			stop and return \csf{FAIL} \;
		}
		\Else{
			let $r$ be a new node with $\lambda(r) =  \is \setminus (\is[C] \cup 
			\is[\U 
			\setminus C])$ \;
			$\csf{left}(r)= $ \ctt{BuildTree}$(\is[C])$ \;
			$\csf{right}(r)  = $ \ctt{BuildTree}$(\is[\U \setminus C])$ 
			\;			
			return $r$ \;
		}
	}
	
	\caption{\ctt{BuildTree}.}
	\label{alg:trad:build-tree}
\end{algorithm}

\begin{theorem} \label{thm:trad:build-tree}
Given an implicational base $\is$ over $\U$, the algorithm \nf{\ctt{BuildTree}} computes 
a $\is$-tree if it exists, in $O(\card{\U}^2 \times \card{\is} \times 
\alpha(\card{\is} \times \card{\U}, \card{\U}))$ time and $O(\card{\is} \times 
\card{\U})$ space, where $\alpha(\cdot, \cdot)$ is the inverse of the Ackermann function.
\end{theorem}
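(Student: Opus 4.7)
The plan is to establish correctness by induction on $\card{\U}$ invoking Theorem~\ref{thm:trad:H-decomposability}, and then to bound the time and space separately. The base cases $\card{\U} \leq 1$ are immediate from Definition~\ref{def:trad:is-tree}. For the inductive step, \ctt{BuildTree} extracts a premise-connected component $C$ of $\is$. If $C = \U$, then $\is$ is premise-connected; by Proposition~\ref{prop:trad:premise-connectivity} it has no split, and by Proposition~\ref{prop:trad:H-decomposition} it is not H-decomposable, so returning \csf{FAIL} is correct. Otherwise $(C, \U \setminus C)$ is a split, and combining Theorem~\ref{thm:trad:H-decomposability} with the inductive hypothesis applied to the strictly smaller instances $\is[C]$ and $\is[\U \setminus C]$, the root construction at $r$ with $\lambda(r) = \is \setminus (\is[C] \cup \is[\U \setminus C])$ reproduces the assembly from the if-direction of Theorem~\ref{thm:trad:H-decomposability} and yields a valid $\is$-tree; a \csf{FAIL} returned by either recursive call is propagated upwards.

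For the time bound, I would account two primitives at each recursive call. First, computing a premise-connected component via a Union-Find structure over the current ground set: for each $A \imp b \in \is$, fix some $a \in A$ and union $a$ with every other element of $A$, performing $\sum_{A \imp b \in \is} (\card{A} - 1) \leq \card{\is} \cdot \card{\U}$ operations in total, at amortized cost $O(\alpha(\card{\is} \cdot \card{\U}, \card{\U}))$ per operation. Second, partitioning $\is$ against a characteristic vector of $C$ costs $O(\card{\is} \cdot \card{\U})$. Each recursive call therefore costs $O(\card{\is} \cdot \card{\U} \cdot \alpha(\card{\is} \cdot \card{\U}, \card{\U}))$, and the recurrence reads
\[
T(\card{\U}, \card{\is}) \leq T(\card{\U_1}, \card{\is_1}) + T(\card{\U_2}, \card{\is_2}) + c \cdot \card{\is} \cdot \card{\U} \cdot \alpha(\card{\is} \cdot \card{\U}, \card{\U}),
\]
with $\card{\U_1} + \card{\U_2} = \card{\U}$, both $\card{\U_i} \geq 1$, and $\card{\is_1} + \card{\is_2} \leq \card{\is}$.

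I would then verify $T(\card{\U}, \card{\is}) \leq c \cdot \card{\U}^2 \cdot \card{\is} \cdot \alpha(\card{\is} \cdot \card{\U}, \card{\U})$ by induction on $\card{\U}$. Since $\max(\card{\U_1}, \card{\U_2}) \leq \card{\U} - 1$, the inductive hypothesis gives $\card{\U_1}^2 \card{\is_1} + \card{\U_2}^2 \card{\is_2} \leq (\card{\U} - 1)^2 \card{\is}$, and $(\card{\U} - 1)^2 + \card{\U} \leq \card{\U}^2$ closes the induction. For space, each implication is a subset of $\U$, so storing $\is$ takes $\Theta(\card{\is} \cdot \card{\U})$ memory; the labels over the output tree partition $\is \cup \U$ and thus fit within this same budget; and the recursion stack carries at most $\card{\U}$ frames whose local Union-Find and characteristic-vector allocations are dwarfed by the input storage.

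The main obstacle is verifying that the recurrence closes cleanly into $\card{\U}^2$, which relies crucially on the non-triviality of splits ($\card{\U_i} \geq 1$, hence $\max(\card{\U_1}, \card{\U_2}) \leq \card{\U} - 1$); the rest reduces to a standard Union-Find argument for the premise-connectivity primitive and a routine linear partitioning step.
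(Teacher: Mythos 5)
Your proposal is correct and follows essentially the same route as the paper: correctness by induction on $\card{\U}$ via Proposition~\ref{prop:trad:premise-connectivity}, Proposition~\ref{prop:trad:H-decomposition} and Theorem~\ref{thm:trad:H-decomposability}, and a complexity analysis dominated by the union-find computation of premise-connected components over $O(\card{\U})$ recursive calls. Your explicit recurrence for the time bound is a slightly more detailed account of what the paper states as ``number of calls times cost per call,'' but the substance is identical.
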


\begin{proof}
	First, we show by induction on $\card{\U}$ that the algorithm returns a 
	$\is$-tree if 
	and 
	only if $\is$ is H-decomposable. 
	Clearly if $\U = \emptyset$, the algorithms returns $\emptyset$.
	In the case where $\U$ is reduced to a vertex $v$, the algorithm returns a 
	$\is$-tree corresponding to a leaf with label $v$.
	
	Now, assume that the algorithm is correct for implicational bases with 
	$\card{\U} < 
	n$, 
	$n \in \cb{N}$, and consider a base $\is$ over $\U$ with $\card{\U} = n$.
	Suppose $\is$ is H-decomposable. 
	By Proposition \ref{prop:trad:premise-connectivity}, $\is$ is not 
	premise-connected.
	Let $C$ be a premise-connected component of $\is$.
	Inductively, the algorithm is correct for $\is[C]$ and $\is[\U \setminus C]$ 
	since $ 
	1 \leq \card{C} < n$.
	From Theorem \ref{thm:trad:H-decomposability}, we have that both $\is[C]$ and 
	$\is[\U 
	\setminus C]$ are H-decomposable.
	By induction, the algorithm computes a $\is[C]$-tree $(T_1, \lambda_1)$ and a 
	$\is[\U \setminus C]$-tree $(T_2, \lambda_2)$. 
	Hence, the algorithm returns a labelled tree $(T, \lambda)$ with root $r$ 
	whose label 
	is $\lambda(r) = \is \setminus (\is[C] \cup \is[\U \setminus C])$ and children 
	$T_1$ and $T_2$.
	This tree satisfies all conditions to be a $\is$-tree.
	Thus, the algorithm computes a $\is$-tree for every H-decomposable 
	implicational 
	base. 
	
	Now suppose $\is$ is not H-decomposable. We have two cases:
	\begin{enumerate}
		\item $\is$ is premise-connected and the algorithm returns \csf{FAIL} in 
		Line \textbf{9}.
		\item $\is$ is not premise-connected.
		The algorithm chooses a premise-connected component $C$ with $1 \leq 
		\card{C} < n$. 
		By Theorem \ref{thm:trad:H-decomposability}, either $\is[C]$ or $\is[\U 
		\setminus C]$ 
		is H-indecomposable. 
		Thus, by induction, the algorithm will return \csf{FAIL} for the input 
		$\is[C]$ 
		or 
		$\is[\U \setminus C]$ in lines \textbf{11}-\textbf{14}. 
		Since the algorithm stops, the output of the algorithm is \csf{FAIL}.
	\end{enumerate}
	Hence, the algorithm fails if the input $\is$ is H-indecomposable.
	We conclude that the algorithm returns a $\is$-tree if and only if the input 
	$\is$ is H-decomposable.
	
	Finally, we show that the total time and space complexity of the algorithm are 
	polynomial. 
	The space required for the algorithm is bounded by the size of the 
	implicational base 
	$\is$, the ground set $\U$ and the size of the $\is$-tree.
	As the size of the $\is$-tree is bounded by $O(\card{\is} \times \card{\U})$, 
	the 
	overall 
	space is bounded by $O(\card{\is} \times \card{\U})$.
	
	The time complexity is bounded by the sum of the costs of all nodes (or calls) 
	of 
	the search tree. 
	The number of calls is bounded by $O(\card{\U})$, the size of the search tree.
	The cost of a call is dominated by the computation of a premise-connected 
	component of the input $\is$.
	For this, we use union-find data structure of \cite{tarjan1984worst}, which 
	runs in 
	almost 
	linear time, \ie $O(\card{\is} \times \card{\U} \times \alpha(\card{\is} 
	\times 
	\card{\U}, \card{\U}))$ where $\alpha(.,.)$ is the inverse Ackermann function. 
	The almost linear comes from the fact that $\alpha(\card{\U}) \leq 4$ for every
	practical implicational base (see \cite{tarjan1984worst}). 
	Thus, the total time complexity is $O(\card{\U}^2 \times \card{\is} \times 
	\alpha(\card{\is} \times \card{\U}, \card{\U}))$.
\end{proof}

It is worth noticing, that the $\is$-tree we obtain by the end of Algorithm 
\ref{alg:trad:build-tree} depends on the choice of a premise-connected component 
in line \textbf{7}.
As shown by the following example, the structure of the resulting $\is$-tree is 
impacted by this choice.

\begin{example} \label{ex:trad:multiple-tree}
Let $\U = \{1, 2, 3, 4, 5, 6, 7, 8\}$ and let $\is$ be the implicational base 
$\{12 \imp 3, 23 \imp 4, \allowbreak 34 \imp 5, 56 \imp 7, 67 \imp 
8\}$.
For convenience, we represent $\is$ in Figure \ref{fig:trad:multiple-tree-DH}.

\begin{figure}[ht!]
	\centering 
	\includegraphics[scale=1.0, page=1]{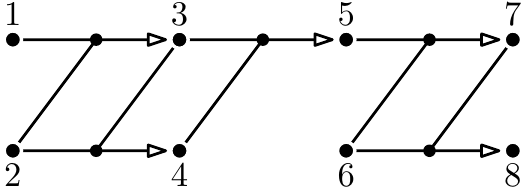}%
	\caption{The implicational base of Example \ref{ex:trad:multiple-tree}.}
	\label{fig:trad:multiple-tree-DH}
\end{figure}

The premise-connected components of $\is$ are $\{1, 2, 3, 4\}$, $\{5, 6, 7\}$ 
and 
$\{8\}$.
Thus, we can devise at least three distinct $\is$-trees for $\is$.
In Figure \ref{fig:trad:multiple-tree-trees}, we give two of them.
On the one hand, the $\is$-tree on the left balances the size of labels of its 
interior nodes.
On the other hand, the $\is$-tree on the right is balanced.

\begin{figure}[ht!]
	\centering 
	\includegraphics[scale=0.8, page=2]{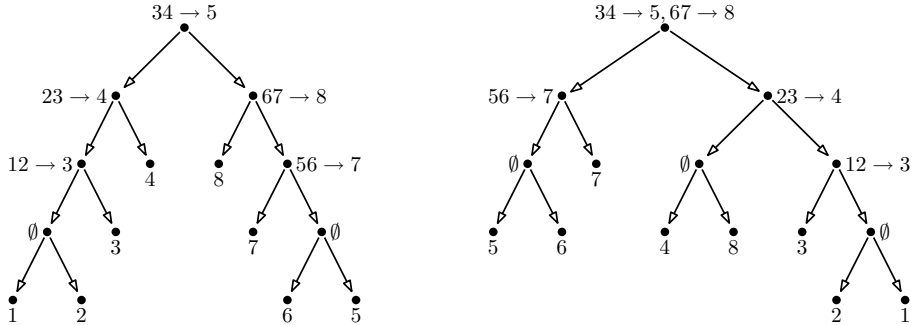}%
	\caption{Two $\is$-trees for the implicational base of Example 
		\ref{ex:trad:multiple-tree}.}
	\label{fig:trad:multiple-tree-trees}
\end{figure}
	
\end{example}

Following the previous example, a natural question arises: are all 
\textit{$\is$-trees 
	equivalently interesting?}
In particular, a balanced $\is$-tree is a good candidate as the 
balancing is a common desirable property for decomposition trees to obtain 
efficient 
algorithms.
This question, which uniquely depends on the syntax of the implicational base, is 
left 
open for further research.

\subsection{Extension of the H-decomposition}

As seen before, there are implicational bases that cannot have a split and thus that cannot have a H-decomposition into trivial sub-bases.
Such implicational bases are premise-connected, and will be called 
\textit{irreducible 
	H-factors} (H-factors for short).  
Now  we describe a slight modification of Algorithm \ref{alg:trad:build-tree} to 
obtain a 
H-decomposition of implicational bases into H-factors. 
Instead of returning \csf{FAIL} at line \textbf{9} in Algorithm \ctt{BuildTree}, 
we replace it by the following:
\begin{center}
	\textbf{9'} create a new leaf $r$ with $\lambda(r) = \is$ and return $r$;
\end{center}
Algorithm \ref{alg:trad:build-tree-2}, called \ctt{H-BuildTree}, is the updated version of \ctt{BuildTree} with this modification.

\begin{algorithm}
	\KwIn{An implicational base $\is$ over $\U$}
	\KwOut{A $\is$-tree with $H$-factors}
	
	\If{$\U = \emptyset$}{
		return $\emptyset$ \;
	}
	
	\If{$\U$ has one vertex $v$}{
		create a new leaf $r$ with appropriated $\lambda(r)$\; 
		return $r$ \;
	}\Else{
		compute a premise-connected component $C$ of $\is$ \;
		\If{$\card{C} =\card{\U}$}{
			create a new leaf $r$ with $\lambda(r) = \is$ and return $r$\;
		}
		\Else{
			let $r$ be a new node with $\lambda(r) =  \is \setminus (\is[C] \cup 
			\is[\U \setminus C])$\;
			$\csf{left}(r)= $ \ctt{BuildTree}$(\is[C])$\;
			$\csf{right}(r)  = $ \ctt{BuildTree}$(\is[\U \setminus C])$ 
			\;			
			return $r$ \;
		}
	}
	
	\caption{\ctt{H-BuildTree}}.
	\label{alg:trad:build-tree-2}
\end{algorithm}

\begin{example} \label{ex:trad:h-factors}
	Consider $\U = \{1, 2, 3, 4, 5, 6\}$ and let $\is = \{45 \imp 1, 12 \imp 3, 23 
	\imp 1, 13 \imp 2, 3 \imp 6, \allowbreak 1 \imp 4 \}$.
	We represent $\is$ on the left of Figure \ref{fig:trad:h-factors}.
	Clearly, $\is$ is not premise-connected and its premise-connected components 
	are 
	$\{4, 5\}$, $\{1, 2, 3\}$ and $\{3\}$.
	On the right of Figure \ref{fig:trad:h-factors}, we present a H-decomposition 
	of 
	$\is$ 
	into H-factors.
	
	\begin{figure}[h!]
		\centering 
		\includegraphics[scale=1.0]{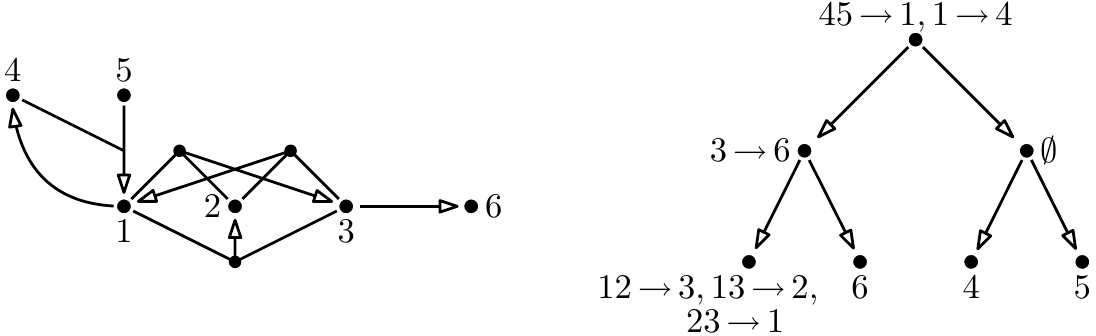}
		\caption{H-decomposition into H-factors.}
		\label{fig:trad:h-factors}
	\end{figure}
\end{example}

With this modification, each possible implicational base 
has now a H-decomposition  where leaves can be H-factors.
To conclude this subsection, we show that H-factors are independent of the choice 
of the 
$\is$-tree.

\begin{proposition} \label{prop:trad:leaves}
	Let $\is$ be an implicational base over $\U$ and let $(T_1, \lambda_1)$ and 
	$(T_2, 
	\lambda_2)$ be two $\is$-trees.
	Then, $T_1$ and $T_2$ have the same number of leaves and $\{\lambda_1(t_1) 
	\mid 
	t_1 \text{ is a leaf of } T_1\} = \{\lambda_2(t_2) \mid t_2 \text{ is a leaf 
	of } 
	T_2\}$.
\end{proposition}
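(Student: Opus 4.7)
The plan is to proceed by strong induction on $|\U|$. The base cases $|\U| \le 1$ are immediate since any $\is$-tree has a single leaf whose label is forced. In the inductive step, if $\is$ is premise-connected and $|\U| > 1$, Proposition \ref{prop:trad:premise-connectivity} gives that $\is$ admits no split, so \ctt{H-BuildTree} must stop at the root (via the replacement line \textbf{9'}) and produce a single leaf labelled $\is$; hence $T_1$ and $T_2$ each consist of one leaf with the same label.

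The heart of the argument is the case where $\is$ is not premise-connected. Let $C_1, \ldots, C_k$ be its premise-connected components. I would first establish a preparatory claim: for every $i$, the premise-connected components of $\is[\U \setminus C_i]$ are precisely $\{C_j : j \neq i\}$. Indeed, any premise-path in $\is[\U \setminus C_i]$ is inherited from $\is$ and therefore cannot cross different $C_j$'s; conversely, any premise-path lying inside some $C_j$ with $j \neq i$ uses only implications whose premises are contained in $C_j \subseteq \U \setminus C_i$, and such implications still belong to $\is[\U \setminus C_i]$.

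Given this claim, the root of $T_1$ (respectively $T_2$) corresponds to extracting some $C_{i_1}$ (respectively $C_{i_2}$), and we recurse on $\is[C_{i_1}]$ and $\is[\U \setminus C_{i_1}]$. Applying the induction hypothesis to each $\is[C_j]$ (valid because $|C_j| < |\U|$) and, using the preparatory claim, also to $\is[\U \setminus C_{i_1}]$, one shows that the set of leaf labels of any $\is$-tree of $\is$ equals $\bigcup_{j=1}^k \mathcal{L}(\is[C_j])$, where $\mathcal{L}(\is')$ denotes the (inductively well-defined) set of leaf labels of a $\is'$-tree. This expression does not depend on the order in which the components are extracted, so $T_1$ and $T_2$ yield the same leaf labels; since the labels of distinct leaves carry ground sets that partition $\U$, the number of leaves coincides as well.

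The main obstacle is the nested use of induction when handling $\is[\U \setminus C_{i_1}]$: one must verify that the recursion on this sub-base really does distribute along its premise-connected components $\{C_j : j \neq i_1\}$, and that the resulting H-factors match those from a direct recursion on each $\is[C_j]$. The preparatory claim is what makes this commutation clean, together with the elementary observation that the sub-base $\is[C_j]$ is the same object whether viewed inside $\is$ or inside $\is[\U \setminus C_{i_1}]$, because $C_j \subseteq \U \setminus C_{i_1}$.
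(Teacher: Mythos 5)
Your preparatory claim is false, and it is the load-bearing step of your induction. You assert that the premise-connected components of $\is[\U \setminus C_i]$ are precisely $\{C_j : j \neq i\}$, on the grounds that an implication whose premise lies in some $C_j \subseteq \U \setminus C_i$ ``still belongs to $\is[\U \setminus C_i]$''. It does not: the restriction $\is[X]$ retains $A \imp b$ only when $A \cup \{b\} \subseteq X$, so an implication with premise in $C_j$ but head in $C_i$ is discarded, and with it may disappear the only premise-path joining two elements of $C_j$. The paper flags exactly this phenomenon (``premise-connectivity is not inherited'') using Example~\ref{ex:trad:split}: there $\{4,5,6\}$ is a premise-connected component of $\is$, yet in $\is[\U \setminus \{1,2,3\}] = \{45 \imp 6,\ 5 \imp 7\}$ the elements $5$ and $6$ are no longer premise-connected because $56 \imp 2$ has been dropped. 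Consequently the components of $\is[\U \setminus C_{i_1}]$ only \emph{refine} $\{C_j : j \neq i_1\}$, and your induction hypothesis applied to that sub-base yields $\bigcup_{D} \mathcal{L}(\is[D])$ over these finer components $D$, not $\bigcup_{j \neq i_1} \mathcal{L}(\is[C_j])$; identifying the two requires a further argument (it can be made, e.g.\ by exhibiting an $\is[C_j]$-tree that peels off the pieces $D \subseteq C_j$ one by one and invoking the induction hypothesis on $\is[C_j]$), which is precisely the ``commutation'' you declared clean. A second, smaller gap: the proposition concerns arbitrary $\is$-trees, whose root split $(\U_1, \U_2)$ may place several components on each side; it need not extract a single component $C_{i_1}$ the way \ctt{H-BuildTree} does, so your case analysis at the root does not cover all admissible trees.

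For comparison, the paper sidesteps the recursion entirely: given a leaf of $T_1$ labelled by an H-factor $\is_H$ spanning $\U_H$, it takes the lowest node $t_2$ of $T_2$ lying above all of $\U_H$ and argues that $t_2$ must be a leaf, since an interior $t_2$ would supply a split separating $\U_H$, contradicting the premise-connectivity of $\is_H$; symmetry then forces the two leaf labels to coincide. That argument never needs to track how components behave under restriction, which is where your proof breaks.
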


\begin{proof}
	If $\is$ is H-decomposable or $(T_1, \lambda_1) = (T_2, \lambda_2)$, the 
	result is 
	clear 
	due to Theorem \ref{thm:trad:build-tree}.
	Assume that $\is$ is not H-decomposable and that the trees are different.
	Let $t_1$ be a leaf of $T_1$ such that $\lambda_1(t_1) = \is_H$ is a H-factor of 
	$\is$.
	Let $\U_H$ be the set of elements spanned by $\is_H$ and let $t_2$ be the 
	lowest node 
	of 
	$T_2$ such that $\is_{H} \subseteq \bigcup \{\lambda_2(t_2') \mid t_2 \text{ is 
	an ancestor of } t_2' \text{ in } T_2\}$.
	In other words, $t_2$ is the ancestor of all the elements in $\U_H$.
	If $t_2$ is not a leaf, there exists a split in the sub-base induced by $t_2$ 
	which 
	separates the elements of $\U_H$, a contradiction with $\is_H$ being a 
	H-factor of 
	$\is$ 
	in $(T_1, \lambda_1)$.
	Hence, $t_2$ is also a leaf, and $\lambda_2(t_2) = \is_H$ follows by applying 
	the 
	same 
	reasoning in $T_1$, which concludes the proof.
\end{proof}

\subsection{Splits and decomposition of a closure system}

Naturally, the H-decomposition of an implicational base $\is$ induces a 
decomposition of the closure system $\cs$ defined by $\is$.
We also call the decomposition of $\cs$ a H-decomposition. 
The H-decomposition of $\cs$ is obtained from the H-decomposition of $\is$, 
where the label of a node of its $\is$-tree is replaced by the closure system 
associated 
to the implicational base induced by its subtree. 
The closure systems in leaves are the irreducible H-factors of the input 
closure system. 
Figure \ref{fig:trad:h-factors-cs} illustrates the H-decomposition of the closure 
system 
associated to the H-decomposition of Example \ref{ex:trad:h-factors}. 
Recall that for a set system $\cc{S}$ over $\U$ and a subset $\U'$ of $\U$, 
$\cc{S} \colon \U'$ is the trace of $\cc{S}$ on $\U'$, that is $\cc{S} \colon \U' = \{S 
\cap \U' \mid S \in \cc{S}\}$.

\begin{figure}[h!]
	\centering 
	\includegraphics[scale=0.85, page=2]{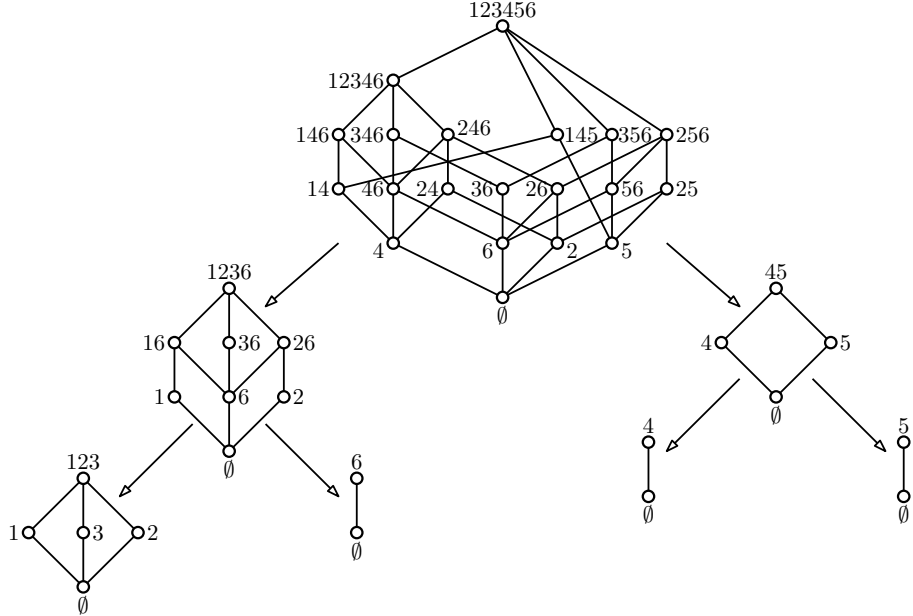}
	\caption{H-decomposition of the closure system corresponding to Example 
		\ref{ex:trad:h-factors}.}
	\label{fig:trad:h-factors-cs}
\end{figure}

\begin{theorem}  \label{thm:trad:H-decomposition-cs}
	Let $\is$ be an implicational base over $\U$ with closure system $\cs$, and 
	let 
	$(\U_1, 
	\U_2)$ be a split of $\is$.
	Let $\cs_1$ and $\cs_2$ be the closure systems associated to $\is[\U_1]$ and 
	$\is[\U_2]$ 
	(resp.).
	Then:
	\begin{enumerate}
		\item $C \in \cs$ implies that $C \cap \U_1 \in \cs_1$ and $C \cap \U_2 
		\in 
		\cs_2$.
		Hence, $\cs \subseteq \cs_1 \times \cs_2$;
		
		\item $\cs = \cs_1 \times \cs_2$ holds whenever $\is[\U_1, \U_2] = 
		\emptyset$ 
		(\ie 
		$\cs$ is the direct product of $\cs_1$ and $\cs_2$);
		
		\item if for every implication $A \imp b$ in $\is[\U_1, \U_2]$, we have $A 
		\subseteq 
		\U_1$, then $\cs \colon \U_1 = \cs_1$ and $\cs \colon \U_2 = \cs_2$; and
		
		\item dually, if $A \subseteq \U_2$ for every $A \imp b$ in 
		$\is[\U_1,\U_2]$, we 
		have
		$\cs \colon \U_1 = \cs_1$ and $\cs \colon \U_2 = \cs_2$.
		
	\end{enumerate}
\end{theorem}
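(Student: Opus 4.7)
The plan is to prove each of the four items in turn, relying essentially on the defining property of a split, namely that the premise of every implication lies entirely in $\U_1$ or entirely in $\U_2$.

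For item (1), I would observe that if $C \in \cs$ then $C$ models every $A \imp b \in \is$. Pick any $A \imp b \in \is[\U_1]$ and suppose $A \subseteq C \cap \U_1$. Then $A \subseteq C$ forces $b \in C$, and since $b \in \U_1$ we get $b \in C \cap \U_1$. Hence $C \cap \U_1$ models $\is[\U_1]$, so $C \cap \U_1 \in \cs_1$; symmetrically $C \cap \U_2 \in \cs_2$. Since $\U_1, \U_2$ is a bipartition, $C = (C \cap \U_1) \cup (C \cap \U_2)$ lies in $\cs_1 \times \cs_2$.

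For item (2), the inclusion $\cs \subseteq \cs_1 \times \cs_2$ is given by (1). For the reverse, take $C_1 \in \cs_1$, $C_2 \in \cs_2$ and verify that $C_1 \cup C_2$ models $\is = \is[\U_1] \cup \is[\U_2]$. For $A \imp b \in \is[\U_1]$, if $A \subseteq C_1 \cup C_2$ then since $A \subseteq \U_1$ and $C_2 \subseteq \U_2$ (with $\U_1 \cap \U_2 = \emptyset$), we get $A \subseteq C_1$, so $b \in C_1 \subseteq C_1 \cup C_2$; the case $A \imp b \in \is[\U_2]$ is symmetric.

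For item (3), the assumption is that every crossing implication $A \imp b \in \is[\U_1, \U_2]$ has $A \subseteq \U_1$ (hence $b \in \U_2$). The inclusions $\cs \colon \U_1 \subseteq \cs_1$ and $\cs \colon \U_2 \subseteq \cs_2$ are already given by (1). For $\cs_1 \subseteq \cs \colon \U_1$, pick $C_1 \in \cs_1$ and show $C := C_1 \cup \U_2 \in \cs$: implications in $\is[\U_1]$ are satisfied because $C \cap \U_1 = C_1 \in \cs_1$; those in $\is[\U_2]$ trivially because $\U_2 \subseteq C$; and crossing implications $A \imp b$ are satisfied because $b \in \U_2 \subseteq C$. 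Then $C \cap \U_1 = C_1$. For $\cs_2 \subseteq \cs \colon \U_2$, pick $C_2 \in \cs_2$ and show $C_2$ itself lies in $\cs$: implications in $\is[\U_1]$ do not fire since their (nonempty) premises lie in $\U_1$ while $C_2 \subseteq \U_2$; crossing implications do not fire for the same reason; and implications of $\is[\U_2]$ are satisfied by assumption. Hence $C_2 \in \cs$ and $C_2 \cap \U_2 = C_2$.

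Item (4) is the mirror of (3), obtained by swapping the roles of $\U_1$ and $\U_2$, so no new work is needed. The only subtle point I anticipate is in (3): the asymmetry between the two traces. Lifting a $C_1 \in \cs_1$ requires saturating $\U_2$ in order to pre-emptively satisfy the crossing implications, whereas lifting a $C_2 \in \cs_2$ costs nothing because no implication has a premise inside $\U_2$ that could force an element of $\U_1$. This asymmetry is exactly what makes the trace equalities hold while neither the direct-product equality of (2) nor its symmetric counterpart is available in general.
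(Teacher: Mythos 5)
Your proposal is correct, and items (1), (2) and (4) follow the paper's argument essentially verbatim (direct verification that traces are models, and that unions of closed sets model a crossing-free base). The one genuine divergence is in item (3): to show $\cs_1 \subseteq \cs \colon \U_1$, the paper takes an arbitrary $C_1 \in \cs_1$ and analyses the forward-chaining sequence $C_1 = X_0 \subseteq \dots \subseteq X_k = \cl(C_1)$, proving by induction that each step only adds elements of $\U_2$, so that $\cl(C_1) \cap \U_1 = C_1$; you instead exhibit the explicit witness $C_1 \cup \U_2$ and check directly that it models all three sub-bases. Your witness argument is shorter and avoids the induction, at the cost of producing the $\U_2$-maximal extension rather than the canonical closure $\cl(C_1)$; the paper's version has the side benefit of describing what the closure operator actually does to sets of $\cs_1$ (it never leaves $\U_1$ untouched elements behind), which is the picture reused later in the acyclic-split section. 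Both arguments rely on the same two facts --- that premises are nonempty (the standardness remark) and that crossing implications point from $\U_1$ into $\U_2$ --- and your treatment of $\cs_2 \subseteq \cs \colon \U_2$ (showing $C_2$ itself is closed in $\cs$) is exactly the paper's closing observation. So the proposal is sound; only the proof technique for one inclusion differs.
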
	

\begin{proof} 
	Consider a split $(\U_1, \U_2)$ of  $\is$, $\cs_1$ and $\cs_2$ the 
	closure systems corresponding to $\is[\U_1]$ and $\is[\U_2]$. 
	Their respective closure operators are $\cl_1$, $\cl_2$.
	We prove items \emph{1}, \emph{2} and \emph{3}.
	Statements \emph{3} and \emph{4} are similar.
	
	
	\emph{Item 1.}
	Let $C \in \cs$, $C_1 = C \cap \U_1$ and let $A \imp b$ be an implication of 
	$\is[\U_1]$.
	Suppose $A \subseteq C_1$ and $b \notin C_1$. 
	Then we also have $A \subseteq C$ and $b \notin C$ which contradicts $C \in 
	\cs$ as 
	$A 
	\imp b \in \is$.
	Thus $C_1\in \cs_1$.
	A similar reasoning applies to $\cs_2$, and $\cs \subseteq \cs_1 \times \cs_2$ 
	holds. 
	
	\emph{Item 2.}
	We readily have that $\cs \subseteq \cs_1 \times \cs_2$ by item \emph{(i)}. 
	For the other inclusion, let $C_1 \in \cs_1$ and $C_2 \in \cs_2$. 
	We show that $C_1 \cup C_2 \in \cs$. 
	Let $A \imp b$ be an implication of $\is$ with $A \subseteq C_1 \cup C_2$. 
	As $\is[\U_1,\U_2]$ is empty, $A \imp b$ is either an implication of 
	$\is[\U_1]$ or 
	$\is[\U_2]$.
	As $C_1, C_2$ are closed for $\is[\U_1]$, $\is[\U_2]$ (resp.), it follows that 
	$C_1 \cup C_2 \in \cs$.
	
	\emph{Item 3.}
	Let $C_1 \in \cs_1$.
	We show that $\cl(C_1)$ satisfies $\cl(C_1) \cap \U_1 = C_1$.
	We readily have that $C_1 \subseteq \cl(C_1) \cap \U_1$.
	Let $C_1 = X_0 \subset X_1 \subset \dots \subset X_k = \cl(C_1)$ be the 
	sequence of 
	sets 
	obtained by applying the forward chaining algorithm on $C_1$ with $\is$.
	We show by induction on $0 \leq i \leq k$ that $X_i \cap \U_1 = C_1$.
	For the initial case $X_0 = C_1$, the result is clear.
	Now assume that the results holds true for any $0 \leq i < k$ and consider 
	$X_{i + 
		1}$.
	Let $A \imp b$ be an implication such that $A \subseteq X_i$.
	Since $(\U_1, \U_2)$ is a split of $\is$, either $A \subseteq \U_1$ or $A 
	\subseteq 
	\U_2$.
	We have three cases
	\begin{enumerate}[label=(\arabic*)]
		\item $A \subseteq \U_2$.
		Then $A \imp b \in \is[\U_2]$ and $b \in \U_2$ so that $b \notin X_{i + 1} 
		\cap 
		\U_1$.
		
		\item $A \imp b$ is in $\is[\U_1]$.
		Then, $A \subseteq X_i \cap \U_1$ which equals $C_1$ by inductive 
		hypothesis.
		Since $C_1$ models $\is[\U_1]$ we have that $b \in X_i \cap \U_1 = C_1$.
		
		\item $A \imp b$ is an implication of $\is[\U_1,\U_2]$.
		Then $A \subseteq \U_1$ and $b \in \U_2$ since we assumed that every 
		implication 
		of 
		$\is[\U_1, \U_2]$ has its premise in $\U_1$ and its conclusion in $\U_2$ . 
		Therefore, $b \notin X_{i + 1} \cap \U_1$. 
	\end{enumerate}
	Consequently $X_{i + 1} \setminus X_i \subseteq \U_2$, from which we deduce 
	that 
	$X_{i + 
		1} \cap \U_1 = C_1$, finishing the induction.
	Applying the result on $X_k = \cl(C_1)$, $\cl(C_1) \cap \U_1 = C_1$ follows.
	So $C_1 \in \cs \colon \U_1$ and $\cs_1 \subseteq  \cs \colon \U_1$. 
	The reverse inclusion holds by item \emph{1}.
	As for $\cs_2$, we have $\cs_2 \subseteq \cs$ as $A \subseteq \U_1$ for every 
	implication 
	$A \imp b$ of $\is[\U_1, \U_2]$.
\end{proof}

According to Theorem \ref{thm:trad:H-decomposition-cs} item \emph{1}, every 
closure 
system is a subset of the product of its H-factors closure systems. 
So it is possible to compute $\cs_1$ and $\cs_2$ in parallel for every split $(\U_1, \U_2)$ in the $\is$-tree, and then use the bipartite implicational base 
$\is[\U_1,\U_2]$ 
to compute $\cs$. 
But this strategy is expensive, since the size of $\cs_1$ and $\cs_2$ may be 
exponential in the size of $\cs$. 

\begin{example}
	Let $\U = \{u_1, \dots, u_k, x, y \}$ for some $k \in \cb{N}$ and let 
	$\is = \bigcup \{\{ \allowbreak u_i u_j \imp x, u_i u_j \imp y\} \mid 1 \leq 
	i, j, 
	\leq k, i \neq 
	j 
	\} \cup \{xy \imp u_i \mid 1 \leq i \leq k\}$.
	Clearly, the unique possible split is $(\U \setminus \{x, y\}, \{x, y\})$.
	Since $\is[\U \setminus \{x, y\}]$ is empty, its associated closure system is 
	Boolean 
	and 
	has $2^k$ elements.
	However, $\cs = \{v \mid v \in  \U\} \cup \{\{u, v\} \mid \{u, v\} \in (\U 
	\setminus 
	\{x, 
	y \}) \times \{x, y\}\} \cup \{\emptyset, \U\}$ so that $\card{\cs} = 3k + 4$.
\end{example}

However, this exponential reduction cannot occur when the sub-closure systems 
$\cs_1$ and $\cs_2$ appear as traces of $\cs$.

To conclude this section, we relate H-decomposition to the subdirect product 
decomposition \cite{ganter2012formal, gratzer2011lattice}.
In some cases, irreducible factors for splits are also subdirectly irreducible.
For instance, the closure systems depicted in Figure 
\ref{fig:trad:subdirect-2}(a), (b) and (c) are both subdirectly irreducible and 
irreducible H-factors.
\begin{figure}[h!]	
	\centering
	\includegraphics[scale=0.9, page=2]{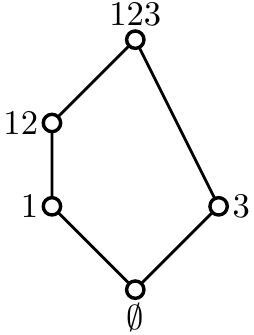}%
	\caption{Subdirectly irreducible H-indecomposable factors.}
	\label{fig:trad:subdirect-2}
\end{figure}
However, there are also closure systems that are subdirectly irreducible, and still admit 
a split.
Consider the closure system $\cs$ over $\U = \{1, 2, 3\}$ in Figure 
\ref{fig:trad:subdirect}
encoded by the implicational base $\{ 2 \imp 1, 13 \imp 2\}$. 
\begin{figure}[h!]	
	\centering
	\includegraphics[scale=0.9, page=1]{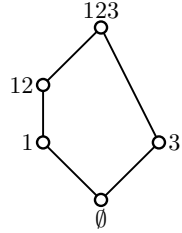}%
	\caption{A subdirectly irreducible closure system with a split.}
	\label{fig:trad:subdirect}
\end{figure}
It is known that it cannot be decomposed using the subdirect product.  
Clearly $\is$ is not premise-connected and $\U_1=\{1,3\}$ and $\U_2=\{2\}$ is 
the 
unique split where $\cs_1=\{\emptyset, 1, 3, 13\}$ and $\cs_2=\{\emptyset, 2\}$ are 
traces.
In this case though, $\cs$ is not a sublattice of $\cs_1 \times \cs_2$, since 
$\{1, 3\}$, the 
upper bound of $1$ and $3$ in $\cs_1 \times \cs_2$ is not preserved in $\cs$. 
Hence, we end the section with the following.

\begin{corollary} \label{cor:trad:subdirect}
	The closure system associated to an implicational base $\is$ is included in 
	the direct product of its H-factors. 
\end{corollary}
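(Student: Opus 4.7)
The plan is to derive the corollary from Theorem~\ref{thm:trad:H-decomposition-cs} item~\emph{1} by induction on the size of a $\is$-tree produced by Algorithm~\ref{alg:trad:build-tree-2}. Fix such a tree $(T,\lambda)$ for $\is$, and let $\is_{H_1},\dots,\is_{H_k}$ denote the H-factors appearing at its leaves, with ground sets $\U_{H_1},\dots,\U_{H_k}$; by Proposition~\ref{prop:trad:leaves} this collection does not depend on the chosen tree. Let $\cs_{H_i}$ be the closure system associated to $\is_{H_i}$ over $\U_{H_i}$. What we need to show is $\cs \subseteq \cs_{H_1}\times\cdots\times\cs_{H_k}$, which makes sense because the $\U_{H_i}$'s are pairwise disjoint and partition $\U$ (condition~\emph{4} of Definition~\ref{def:trad:is-tree}).

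The base case is when $T$ is a single leaf, so $\is$ itself is an H-factor; here the statement is trivial, as $\cs = \cs_{H_1}$. For the inductive step, let $r$ be the root of $T$ with children $T_1$ and $T_2$, inducing a split $(\U_1,\U_2)$ of $\is$. By Theorem~\ref{thm:trad:H-decomposition-cs} item~\emph{1}, $\cs \subseteq \cs_1 \times \cs_2$ where $\cs_i$ is the closure system of $\is[\U_i]$. The subtree $T_i$ together with the restriction of $\lambda$ to its nodes is a $\is[\U_i]$-tree whose leaves are exactly the H-factors of $\is_{H_j}$ with $\U_{H_j}\subseteq \U_i$. Applying the inductive hypothesis to each $\is[\U_i]$ yields $\cs_i \subseteq \prod_{\U_{H_j}\subseteq \U_i} \cs_{H_j}$.

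Because $\U_1$ and $\U_2$ are disjoint and the direct product of closure systems over disjoint ground sets is associative in the obvious way ($\cs_1\times \cs_2 = \{C_1\cup C_2 \mid C_1\in\cs_1,C_2\in\cs_2\}$ only unions sets over disjoint supports), combining the two inclusions gives
\[
\cs \;\subseteq\; \cs_1\times \cs_2 \;\subseteq\; \prod_{j=1}^{k} \cs_{H_j},
\]
which is the desired containment. The one point that needs a small verification, rather than an obstacle, is the associativity remark used in the last display: a closed set $C\in\cs$ writes uniquely as a disjoint union $\bigcup_j (C\cap \U_{H_j})$, and Theorem~\ref{thm:trad:H-decomposition-cs} item~\emph{1} applied inductively guarantees that each $C\cap \U_{H_j}$ lies in $\cs_{H_j}$; so no genuine obstacle arises, and the corollary follows.
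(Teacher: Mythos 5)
Your proof is correct and takes essentially the same route as the paper: the paper's one-line argument (``follows from Theorem~\ref{thm:trad:H-decomposition-cs}, item \emph{1}'') is precisely the recursive application of that item down the $\is$-tree, which you spell out as an induction on the tree. The only cosmetic difference is that the paper additionally invokes closure under intersection, which is not needed for the bare containment you establish.
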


\begin{proof} 
	This follows from Theorem \ref{thm:trad:H-decomposition-cs}, item \emph{1} 
	and 
	the 
	fact that a closure system is closed under intersection. 
\end{proof}

In the next section, we pay more attention to particular splits called  
\emph{acyclic}.
We show how they can be applied to the problem of translating between the 
representations 
of a closure system.

\section{Closure systems with acyclic splits}
\label{sec:trad:acyclic}

In this section, we give a characterization of closure systems with acyclic splits.
Then, we derive a recursive expression of their meet-irreducible elements.
Finally, we devise an algorithm solving \csmc{CCM} in the case of acyclic 
splits.
To illustrate our results, we will use the following running example all along the 
section.

\begin{example}[Running example]
	Let $\U = \{1, 2, 3, 4, 5, 6\}$ and $\is = \{12 \imp 3, \allowbreak 13 \imp 4, 
	23 
	\imp 5, \allowbreak 2 \imp 4, \allowbreak 1 \imp 5, 5 \imp 6, 4 \imp 6\}$.
	We represent $\is$ and its associated closure system $\cs$ in Figure 
	\ref{fig:trad:over-ex-sig}.
	
	\begin{figure}[h!]
		\centering 
		\includegraphics[scale=0.9, page=1]{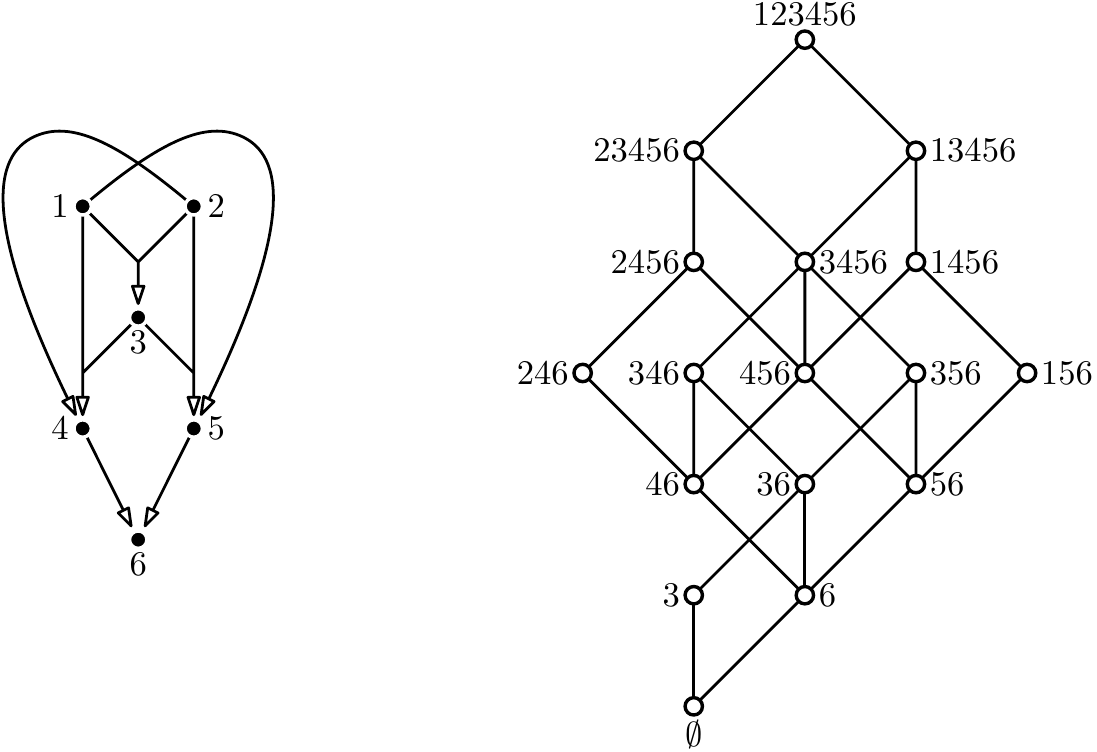}%
		\caption{An implicational base and its associated closure system.}
		\label{fig:trad:over-ex-sig}
	\end{figure}
	
	The bipartition $\U_1 = \{1, 2, 3\}$ and $\U_2 = \{4, 5, 6\}$ is an 
	\emph{acyclic} 
	split of $\is$ and $\cs$: every implications has its premise included in 
	$\U_1$ 
	and its conclusion in $\U_2$.
	We have $\is[\U_1] = \{12 \imp 3\}$, $\is[\U_2] = \{4 \imp 6, 5 \imp 6 \}$ 
	and $\is[\U_1, \U_2] = \{13 \imp 4, 2 \imp 4, 23 \imp 5, 1 \imp 5\}$.
	
\end{example}

We formally introduce \emph{acyclic split} of an implicational base $\is$.
They are a restriction of a split $(\U_1, \U_2)$ where all implications of 
$\is[\U_1, 
\U_2]$ have to go from $\U_1$ to $\U_2$, \ie they satisfy condition 
\emph{3} or \emph{4} of Theorem \ref{thm:trad:H-decomposition-cs}.
The definition of acyclic split for implicational bases extends to closure systems.

\begin{definition}[Acyclic split]
	Let $\is$ be an implicational base over $\U$ and $(\U_1, \U_2)$ a split of 
	$\is$.
	The split $(\U_1, \U_2)$ is \emph{acyclic} if for every $A \imp b \in 
	\is[\U_1, 
	\U_2]$, 
	$A \subseteq \U_1$.
\end{definition}

\begin{definition}[Acyclic split of a closure system]
	Let $\cs$ be a closure system over $\U$ and let $(\U_1, \U_2)$ be a 
	non-trivial 
	bipartition of $\U$ such that $\U_2 \in \cs$.
	Then, $(\U_1, \U_2)$ is an \emph{acyclic split} of $\cs$ if there exists an 
	implicational 
	base $\is$ for $\cs$ with acyclic split $(\U_1, \U_2)$.
\end{definition}

\subsection{Acyclic split of a closure system}
\label{subsec:trad:acyc-split-cs}

Let $\is$ be an implicational base over $\U$ with acyclic split $(\U_1, \U_2)$.
Let $\cs$ be its corresponding closure system.
We first show how to construct $\cs$ from $\cs_1$, the closure system associated 
to 
$\is[\U_1]$, $\cs_2$, the closure system of $\is[\U_2]$ and the implications in
$\is[\U_1, \U_2]$.

We draw intuition from the particular case where $\is[\U_1, \U_2] = \emptyset$.
According to Theorem \ref{thm:trad:H-decomposition-cs}, $\cs$ is the direct 
product 
of $\cs_1$ and $\cs_2$, that is $\cs = \{C_1 \cup C_2 \mid C_1 \in \cs_1, C_2 \in 
\cs_2\}$.
Intuitively, $\cs$ is obtained by \textit{``extending''} each closed set of 
$\cs_2$ with 
a copy of $\cs_1$ (see the left part of Figure \ref{fig:trad:over-construct}).
This point of view will be particularly well-suited for us, and naturally leads to 
the 
following definition.

\begin{definition}
	Let $\cs$ be a closure system over $\U$, $(\U_1, \U_2)$ be a non-trivial 
	bipartition 
	of 
	$\U$ such that $\U_2 \in \cs$.
	Let $C_2 \in\cs$, $C_2 \subseteq \U_2$ and $C \in \cs$.
	We say that $C$ is an \emph{extension} of $C_2$ with respect to $\U_2$ if $C 
	\cap 
	\U_2 = 
	C_2$.
	We denote by $\csf{Ext}(C_2)$ the extensions of $C_2$ in $\cs$.
	The trace $\csf{Ext}(C_2)$ on $\U_1$ is written $\csf{Ext}(C_2) \colon \U_1$.
\end{definition}

In our definition, $\U_2$ is closed.
Therefore, for every $C \in \cs$, $C \cap \U_2$ is also closed.
We deduce that $C$ belongs to the extension of a unique closed set $C_2$ included 
in 
$\U_2$.
As a consequence, we can write $\cs$ as the (disjoint) union of its extensions 
with 
respect to $\U_2$, \ie 
\[ \cs = \bigcup_{C_2 \in \cs, C_2 \subseteq \U_2} \Ext(C_2) \]
This definition of extensions allows to formally express the intuition that the 
direct 
product of $\cs_1$ and $\cs_2$ (when $\is[\U_1, \U_2] = \emptyset$) is obtained by 
extending each closed set of $\cs_2$ with a copy of $\cs_1$.
Indeed, we have $\cs = \bigcup_{C_2 \in \cs_2} \Ext(C_2)$ with the particularity 
that the trace of $\Ext(C_2)$ on $\U_1$ is exactly $\cs_1$ for every $C_2 \in 
\cs_2$.
This construction is illustrated on the left of Figure 
\ref{fig:trad:over-construct}.

\begin{figure}[h!]
	\centering 
	\includegraphics[scale=0.8, page=2]{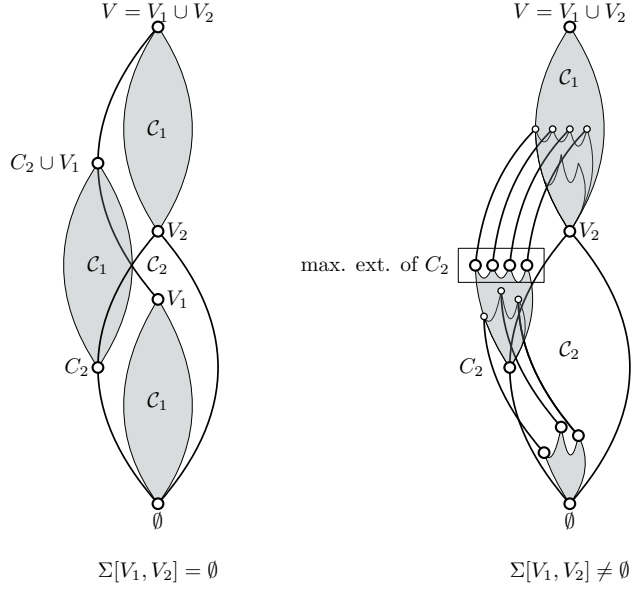}%
	\caption{Building of $\cs$ with an acyclic split: on the left, the case where 
		$\is[\U_1, \U_2] = \emptyset$ (direct product). On the right, the more 
		general 
		case where 
		$\is[\U_1, \U_2] \neq \emptyset$ (increasing extensions).}
	\label{fig:trad:over-construct}
\end{figure}

In the more general case where $\is[\U_1, \U_2]$ is nonempty, we show that 
the extensions of $\cs_2$ are no longer full copies of $\cs_1$, but increasing 
copies of 
ideals of $\cs_1$, as illustrated on the right side of Figure 
\ref{fig:trad:over-construct}.
We begin with the following proposition, which characterizes extensions with the 
bipartite 
set of implications $\is[\U_1, \U_2]$.

\begin{proposition} \label{prop:trad:charac-ext}
Let $\is$ be an implicational base over $\U$ with acyclic split $(\U_1, \U_2)$.
	Let $C_2 \in \cs_2$ and $C_1 \subseteq \U_1$.
	Then, $C = C_1 \cup C_2$ is an extension of $C_2$ if and only if $C_1 \in 
	\cs_1$ and for each implication $A \imp b$ in $\is[\U_1, \U_2]$, $A \subseteq C_1$ 
	implies $b \in C_2$. 
\end{proposition}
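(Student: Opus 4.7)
The plan is to prove both directions by unfolding the definitions and performing a case analysis on the three sub-bases $\is[\U_1]$, $\is[\U_2]$, and $\is[\U_1,\U_2]$, exploiting crucially the fact that the split is acyclic so premises of bipartite implications lie entirely in $\U_1$.

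For the forward direction, I would assume $C = C_1 \cup C_2$ is an extension of $C_2$, so $C \in \cs$ and $C \cap \U_2 = C_2$. Since $C_1 \subseteq \U_1$, $C_2 \subseteq \U_2$, and $\U_1 \cap \U_2 = \emptyset$, we have $C \cap \U_1 = C_1$. Applying Theorem~\ref{thm:trad:H-decomposition-cs} item~\emph{1} gives $C_1 = C \cap \U_1 \in \cs_1$. For an implication $A \imp b \in \is[\U_1, \U_2]$, acyclicity yields $A \subseteq \U_1$ and $b \in \U_2$; if $A \subseteq C_1$, then $A \subseteq C$, so $b \in C$ because $C$ models $\is$, and $b \in C \cap \U_2 = C_2$.

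For the backward direction, assume $C_1 \in \cs_1$ and that every bipartite implication whose premise is included in $C_1$ has its conclusion in $C_2$. Set $C = C_1 \cup C_2$. The equality $C \cap \U_2 = C_2$ is immediate from the disjointness of the bipartition. It remains to prove $C \in \cs$, which I would do by checking every implication $A \imp b \in \is$ in three cases according to the partition induced by the split:
\begin{itemize}
    \item If $A \imp b \in \is[\U_1]$, then $A \cup \{b\} \subseteq \U_1$; if $A \subseteq C$, then $A \subseteq C \cap \U_1 = C_1$, and $C_1 \in \cs_1$ forces $b \in C_1 \subseteq C$.
    \item If $A \imp b \in \is[\U_2]$, a symmetric argument using $C_2 \in \cs_2$ yields $b \in C_2 \subseteq C$.
    \item If $A \imp b \in \is[\U_1, \U_2]$, acyclicity gives $A \subseteq \U_1$, so $A \subseteq C$ implies $A \subseteq C_1$, and the hypothesis directly yields $b \in C_2 \subseteq C$.
\end{itemize}

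There is no real obstacle here; the argument is essentially a bookkeeping check once one notices that the acyclicity assumption eliminates the only problematic possibility (a bipartite implication with premise straddling $\U_1$ and $\U_2$, or going from $\U_2$ to $\U_1$), thereby reducing the verification of $C \in \cs$ to three clean cases.
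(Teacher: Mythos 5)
Your proof is correct and follows essentially the same route as the paper's: the forward direction extracts $C_1 \in \cs_1$ from the inclusion $\cs \subseteq \cs_1 \times \cs_2$ (Theorem~\ref{thm:trad:H-decomposition-cs}, item \emph{1}) and reads off the bipartite condition from $C$ being a model of $\is$, while the backward direction is the same three-case check over $\is[\U_1]$, $\is[\U_2]$, and $\is[\U_1,\U_2]$ using acyclicity to place premises in $\U_1$. No issues.
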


\begin{proof}
	We begin with the only if part.
	Let $C_1$ be a subset of $\U_1$ such that let $C_1$ be a closed set of $\cs_1$ 
	such 
	that 
	$C_1\cup C_2$ is an extension of $C_2$.
	By Theorem \ref{thm:trad:H-decomposition-cs}, $\cs \subseteq \cs_1 \times 
	\cs_2$ so 
	that 
	for every $C_1 \subseteq \U_1$ such that $C_1 \cup C_2 \in \cs$, $C_1 \in 
	\cs_1$ 
	holds.
	Now let $A \imp b \in \is[\U_1, \U_2]$.
	If $A \subseteq C_1$, it must be that $b \in C_2$ since we would contradict 
	$C_1\cup 
	C_2 
	\in \cs$ otherwise.
	
	We move to the if part.
	Let $C_1$ be a closed set of $\cs_1$ and $C_2$ a closed set of $\cs_2$ such 
	that for 
	each implication $A \imp b$ in $\is[\U_1, \U_2]$, $A \subseteq C_1$ implies $b 
	\in 
	C_2$.
	We have to show that $C_1 \cup C_2$ is closed.
	Let $A \imp b$ be an implication of $\is$ with $A \subseteq C_1 \cup C_2$.
	As $(\U_1, \U_2)$ is an acyclic split of $\U$, we have two cases: either $A 
	\imp b$ 
	is in 
	$\is[\U_1, \U_2]$ or it is not.
	In the second case, assume $A \imp b$ belongs to $\is[\U_1]$.
	As $A \subseteq C_1 \cup C_2$, we have $A \subseteq C_1$.
	Furthermore, $C_1$ is closed for $\is[\U_1]$.
	Hence, $b \in C_1 \subseteq C_1\cup C_2$.
	The same reasoning can be applied if $A \imp b$ is in $\is[\U_2]$.
	Now assume $A \imp b$ is in $\is[\U_1, \U_2]$.
	We have that $A \subseteq \U_1$ by definition of an acyclic split.
	In particular, we have $A \subseteq C_1$ which entails $b \in C_2$ by 
	assumption.
	In any case, $C_1\cup C_2$ already contains $b$ for every implication $A \imp 
	b$ in 
	$\is$ such that $A \subseteq C_1\cup C_2$.
	Hence, $C_1\cup C_2$ is closed.
\end{proof}

We readily deduce from Proposition \ref{prop:trad:charac-ext} that $\Ext(\U_2) 
\colon 
\U_1$ is equal to $\cs_1$.
Proposition \ref{prop:trad:charac-ext} is also a step towards the next proposition.
It settles the fact that in an acyclic split, extensions coincide with ideals of 
$\cs_1$.

\begin{proposition} \label{prop:trad:ideals}
Let $\is$ be an implicational base over $\U$ with acyclic split $(\U_1, \U_2)$.
	Let $C_1\in \cs_1$, $C_2 \in \cs_2$. 
	If $C_1\cup C_2$ is an extension of $C_2$, then for every $C_1' \in \cs_1$ 
	such that 
	$C_1' \subseteq C_1$, $C_1' \cup C_2$ is also an extension of $C_2$.
\end{proposition}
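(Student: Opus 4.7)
The plan is to derive this proposition as a direct corollary of the characterization of extensions given in Proposition~\ref{prop:trad:charac-ext}. The heart of the argument is monotonicity in the premise: since premises of implications in $\is[\U_1, \U_2]$ live entirely in $\U_1$ (acyclicity of the split), shrinking the $\U_1$-part of an extension cannot create new premises to satisfy.

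More precisely, I would proceed as follows. Fix $C_1, C_1' \in \cs_1$ and $C_2 \in \cs_2$ with $C_1' \subseteq C_1$, and assume $C_1 \cup C_2 \in \Ext(C_2)$. Applying Proposition~\ref{prop:trad:charac-ext} to $C_1 \cup C_2$, I get that for every implication $A \imp b \in \is[\U_1, \U_2]$ with $A \subseteq C_1$, one has $b \in C_2$. Now to show $C_1' \cup C_2 \in \Ext(C_2)$, I verify the two conditions of Proposition~\ref{prop:trad:charac-ext} for the pair $(C_1', C_2)$: the first, $C_1' \in \cs_1$, holds by hypothesis; for the second, take any $A \imp b \in \is[\U_1, \U_2]$ with $A \subseteq C_1'$. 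Since $C_1' \subseteq C_1$, also $A \subseteq C_1$, and the previous step gives $b \in C_2$. Thus Proposition~\ref{prop:trad:charac-ext} yields that $C_1' \cup C_2$ is an extension of $C_2$, as required.

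There is no real obstacle here: the acyclicity assumption was already exploited once, at the level of Proposition~\ref{prop:trad:charac-ext}, to guarantee that the only ``coupling'' implications between $\U_1$ and $\U_2$ have their premises in $\U_1$. Once that characterization is available, the present proposition is just the observation that the condition ``$A \subseteq C_1 \Rightarrow b \in C_2$'' is preserved when $C_1$ is replaced by any subset $C_1'$ that remains in $\cs_1$. Note also that $(C_1' \cup C_2) \cap \U_2 = C_2$ comes for free from $C_1' \subseteq \U_1$ and $\U_1 \cap \U_2 = \emptyset$, so the extension terminology is consistent.
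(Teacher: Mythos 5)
Your proof is correct and follows essentially the same route as the paper's: both apply Proposition~\ref{prop:trad:charac-ext} to $C_1 \cup C_2$, observe that the condition ``$A \subseteq C_1 \Rightarrow b \in C_2$'' restricts to any $C_1' \subseteq C_1$ in $\cs_1$, and then apply Proposition~\ref{prop:trad:charac-ext} again in the converse direction. No issues to report.
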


\begin{proof}
	Let $C_1 \in \cs_1, C_2 \in \cs_2$ such that $C_1 \cup C_2 \in \cs$.
	Let $C_1' \in \cs_1$ such that $C_1' \subseteq C_1$.
	As $C_1\cup C_2$ is an extension of $C_2$, for each $A \imp b$ in $\is[\U_1, 
	\U_2]$ 
	such 
	that $A \subseteq C_1$, we have $b \in C_2$ by Proposition 
	\ref{prop:trad:charac-ext}. 
	Since $C_1' \subseteq C_1$, this condition holds in particular if $A \subseteq 
	C_1'$.
	Applying Proposition \ref{prop:trad:charac-ext}, we deduce that $C_1' \cup 
	C_2$ is 
	closed.
\end{proof}

In fact, the preceding proposition can be further strengthened.
Not only extensions of $\cs_2$ correspond to ideals of $\cs_1$, but they are 
increasing.
That is, if $C_1$ contributes to an extension of $C_2$, it will also contribute to 
an 
extension of any closed set $C_2' \in \cs_2$ including $C_2$.

\begin{lemma} \label{lem:trad:hereditary}
Let $\is$ be an implicational base over $\U$ with acyclic split $(\U_1, \U_2)$.
	Let $C_2, C_2' \in \cs_2$ such that $C_2 \subseteq C_2'$.
	Then $\Ext(C_2) \colon \U_1 \subseteq \Ext(C_2') \colon \U_1$.
\end{lemma}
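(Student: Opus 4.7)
The plan is to reduce the statement directly to the characterization of extensions given by Proposition \ref{prop:trad:charac-ext}. Concretely, I would take an arbitrary $C_1 \in \Ext(C_2) \colon \U_1$ and aim to show $C_1 \cup C_2'$ is a closed set of $\cs$; since $C_1 \cup C_2'$ would then intersect $\U_1$ in $C_1$, this gives $C_1 \in \Ext(C_2') \colon \U_1$.

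First, by definition of $\Ext(C_2) \colon \U_1$, there exists an extension $C = C_1 \cup C_2 \in \cs$ of $C_2$. Applying Proposition \ref{prop:trad:charac-ext} to this extension gives both $C_1 \in \cs_1$ and the implication-transfer condition: for every $A \imp b \in \is[\U_1, \U_2]$, if $A \subseteq C_1$ then $b \in C_2$.

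Next, I would verify the hypotheses of Proposition \ref{prop:trad:charac-ext} for the pair $(C_1, C_2')$. We already have $C_1 \in \cs_1$ and $C_2' \in \cs_2$ by assumption. For the implication-transfer condition, let $A \imp b \in \is[\U_1, \U_2]$ with $A \subseteq C_1$. From the previous paragraph we obtain $b \in C_2$, and because $C_2 \subseteq C_2'$ this yields $b \in C_2'$. Applying Proposition \ref{prop:trad:charac-ext} in the other direction, $C_1 \cup C_2'$ is closed, hence an extension of $C_2'$, so $C_1 \in \Ext(C_2') \colon \U_1$.

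There is essentially no obstacle here: the proof is a one-line monotonicity argument on top of Proposition \ref{prop:trad:charac-ext}. The only subtlety worth emphasizing is that the characterization uses an upward condition (``$b$ must lie in the $\U_2$-part''), which is preserved when the $\U_2$-part grows; this is precisely what makes the extensions increase along the order of $\cs_2$.
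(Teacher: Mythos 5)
Your proof is correct and follows exactly the same route as the paper's: both arguments apply Proposition \ref{prop:trad:charac-ext} in one direction to extract the transfer condition for $C_1 \cup C_2$, observe that $b \in C_2 \subseteq C_2'$ preserves it, and apply the proposition in the other direction to conclude that $C_1 \cup C_2'$ is closed. Nothing is missing.
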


\begin{proof}
	We need to show that for every $C_2, C_2' \in \cs_2$ such that $C_2 \subseteq 
	C_2'$, 
	if 
	$C_1 \cup C_2 \in \cs$ for some $C_1 \subseteq \U_1$, we also have $C_1 \cup 
	C_2' \in 
	\cs$.
	Observe that due to Proposition \ref{prop:trad:charac-ext}, $C_1 \in \cs_1$.
	As $C_1 \cup C_2$ is an extension of $C_2$, for every implication $A \imp b$ 
	of 
	$\is[\U_1, \U_2]$ such that $A \subseteq C_1$, we have $b \in C_2 \subseteq 
	C_2'$ by 
	Proposition \ref{prop:trad:charac-ext}.
	Therefore, $C_1\cup C_2'$ is indeed an extension of $C_2'$.
\end{proof}

\begin{corollary} \label{cor:trad:prec-hered}
Let $\is$ be an implicational base over $\U$ with acyclic split $(\U_1, \U_2)$.
	Let $C_2, C_2' \in \cs_2$ such that $C_2 \prec C_2'$ and let $C_1 \in \cs_1$ 
	such 
	that 
	$C_1 \cup C_2 \in \cs$.
	Then $C_1 \cup C_2' \in \cs$ and $C_1 \cup C_2 \prec C_1 \cup C_2'$.
\end{corollary}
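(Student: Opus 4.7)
The plan is to verify the two claims in turn. The membership $C_1 \cup C_2' \in \cs$ is immediate from \autoref{lem:trad:hereditary}: since $C_2 \prec C_2'$ in $\cs_2$ we have $C_2 \subseteq C_2'$, so $\Ext(C_2) \colon \U_1 \subseteq \Ext(C_2') \colon \U_1$, and the hypothesis $C_1 \cup C_2 \in \cs$ places $C_1$ in the left-hand trace, hence also in the right-hand one.

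The bulk of the argument is the covering relation $C_1 \cup C_2 \prec C_1 \cup C_2'$. I will proceed by contradiction: assume there exists $C \in \cs$ with
\[
C_1 \cup C_2 \;\subsetneq\; C \;\subsetneq\; C_1 \cup C_2'.
\]
Taking the trace on $\U_1$ (and using $C_2, C_2' \subseteq \U_2$) we get $C_1 \subseteq C \cap \U_1 \subseteq C_1$, so $C \cap \U_1 = C_1$. Hence $C = C_1 \cup (C \cap \U_2)$. Intersecting the displayed inclusions with $\U_2$ yields $C_2 \subseteq C \cap \U_2 \subseteq C_2'$, and the two inclusions must be strict: if $C \cap \U_2 = C_2$ then $C = C_1 \cup C_2$, contradicting $C_1 \cup C_2 \subsetneq C$; if $C \cap \U_2 = C_2'$ then $C = C_1 \cup C_2'$, contradicting $C \subsetneq C_1 \cup C_2'$.

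The final step is to observe that $C \cap \U_2 \in \cs_2$, which is exactly item \emph{1} of \autoref{thm:trad:H-decomposition-cs}. Thus $C \cap \U_2$ would be a closed set of $\cs_2$ strictly between $C_2$ and $C_2'$, contradicting $C_2 \prec C_2'$. No such $C$ exists, so $C_1 \cup C_2 \prec C_1 \cup C_2'$ in $\cs$, finishing the proof.

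I do not foresee a genuine obstacle here: the whole argument is a routine unpacking of covers through the bipartition $(\U_1, \U_2)$, with \autoref{lem:trad:hereditary} and \autoref{thm:trad:H-decomposition-cs}(1) doing all the real work. The only point requiring a small amount of care is to remember that $\U_2$ is assumed closed (part of the definition of an acyclic split of a closure system), which is what legitimizes the decomposition $C = (C \cap \U_1) \cup (C \cap \U_2)$ with both pieces closed in their respective sub-systems.
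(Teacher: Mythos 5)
Your proposal is correct and follows essentially the same route as the paper: membership via Lemma \ref{lem:trad:hereditary}, and the covering relation by noting that any closed set between $C_1 \cup C_2$ and $C_1 \cup C_2'$ has its trace on $\U_2$ closed in $\cs_2$ (Theorem \ref{thm:trad:H-decomposition-cs}, item \emph{1}), which forces it to equal $C_1 \cup C_2'$ since $C_2 \prec C_2'$. Your write-up merely spells out the intermediate steps (that $C \cap \U_1 = C_1$ and that both inclusions on the $\U_2$-trace would be strict) that the paper leaves implicit.
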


\begin{proof}
	The fact that $C_1 \cup C_2'$ is closed follows from Lemma 
	\ref{prop:trad:hereditary}.
	By Theorem \ref{thm:trad:H-decomposition-cs}, $\cs \subseteq \cs_1 \times 
	\cs_2$ so 
	that 
	any closed set $C$ such that $C_1 \cup C_2 \subset C \subseteq C_1 \cup C_2' $ 
	satisfies 
	$C \cap \U_2 \in \cs_2$.
	Since $C_2 \prec C_2'$ in $\cs_2$, $C = C_1 \cup C_2'$ follows.
\end{proof}

Thus, we have shown that if $(\U_1, \U_2)$ is an acyclic split of $\is$, $\cs$ can 
be 
constructed by extending each closed set $C_2$ of $\cs_2$, with an ideal of 
$\cs_1$, in 
an increasing fashion.
This construction is illustrated in Figure \ref{fig:trad:over-construct} and in 
Figure 
\ref{fig:trad:over-ex-ext} on an example.
In the next theorem, we demonstrate that this construction by increasing 
extensions is in 
fact a characterization of acyclic splits.

\begin{theorem} \label{thm:trad:charac-acyclic-split}
	Let $\cs$ be a closure system over $\U$ and $(\U_1, \U_2)$ be a non-trivial 
	bipartition 
	of $\U$ such that $\U_2 \in \cs$.
	Let $\cs_1 = \ftr \U_2 \colon \U_1$ and $\cs_2 = \idl \U_2$.
	Then, $(\U_1, \U_2)$ is an acyclic split for $\cs$ if and only if for every 
	$C_2, 
	C_2' 
	\in \cs_2$ such that $C_2 \subseteq C_2'$, we have $\Ext(C_2) \colon \U_1 
	\subseteq 
	\Ext(C_2') \colon \U_1$.
\end{theorem}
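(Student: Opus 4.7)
The plan is to prove both directions separately. The forward direction follows essentially from Lemma \ref{lem:trad:hereditary}: if $(\U_1, \U_2)$ is an acyclic split of $\cs$, by definition there is an implicational base $\is$ for $\cs$ with acyclic split $(\U_1, \U_2)$; items 3 and 4 of Theorem \ref{thm:trad:H-decomposition-cs} imply that the closure system of $\is[\U_1]$ coincides with $\ftr \U_2 \colon \U_1 = \cs_1$ and that of $\is[\U_2]$ with $\cs \colon \U_2 = \idl \U_2 = \cs_2$. Lemma \ref{lem:trad:hereditary} then delivers the increasing extension property.

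For the backward direction, I would build an implicational base for $\cs$ whose only bipartite implications go from $\U_1$ to $\U_2$. First, I check that $\cs_1$ is indeed a closure system on $\U_1$: it contains $\U_1$ (take $\U \in \cs$) and is closed under intersection (intersecting two closed supersets of $\U_2$ gives a closed superset of $\U_2$), while $\cs_2 = \idl \U_2$ is a closure system on $\U_2$ since $\U_2 \in \cs$. Fix implicational bases $\is_1$ for $\cs_1$ over $\U_1$ and $\is_2$ for $\cs_2$ over $\U_2$. The key step is to define, for each $C_1 \in \cs_1$, the minimum subset $\mu(C_1) \in \cs_2$ forced to accompany $C_1$ in $\cs$. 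I would show that $\mathcal{F}(C_1) = \{C_2 \in \cs_2 : C_1 \cup C_2 \in \cs\}$ is nonempty (it contains $\U_2$, because $C_1 \in \ftr \U_2 \colon \U_1$ yields a closed $C \supseteq \U_2$ with $C \cap \U_1 = C_1$, so $C_1 \cup \U_2 = C \in \cs$), upward closed in $\cs_2$ (by the increasing extension hypothesis), and closed under intersection (since $\cs$ is). Hence $\mathcal{F}(C_1)$ admits a $\subseteq$-minimum $\mu(C_1) = \cl(C_1) \cap \U_2$.

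I then define $\is_{12} = \{C_1 \imp b : C_1 \in \cs_1, b \in \mu(C_1)\}$ and $\is = \is_1 \cup \is_2 \cup \is_{12}$. By construction $(\U_1, \U_2)$ is an acyclic split of $\is$. It remains to verify that $\is$ represents $\cs$: soundness is immediate because the three kinds of implications hold in $\cs$. For completeness, let $C \subseteq \U$ satisfy $\is$ and set $C_1 = C \cap \U_1$, $C_2 = C \cap \U_2$; then $C_1 \in \cs_1$ and $C_2 \in \cs_2$, and the bipartite implications force $\mu(C_1) \subseteq C_2$. Applying the increasing extension hypothesis to $\mu(C_1) \subseteq C_2$ together with $C_1 \cup \mu(C_1) \in \cs$ yields $C = C_1 \cup C_2 \in \cs$. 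The main obstacle is the lattice-theoretic construction of $\mu(C_1)$ from the hypothesis on $\cs$ alone, without a prior implicational base to guide it; once this minimum is in hand, the remaining verifications are straightforward.
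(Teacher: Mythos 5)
Your proof is correct and follows essentially the same route as the paper's: the forward direction via Lemma \ref{lem:trad:hereditary}, and the converse by constructing a base $\is_1 \cup \is_2 \cup \is_{12}$ whose bipartite part records, for each $C_1 \in \cs_1$, the forced conclusion $\mu(C_1) = \cl(C_1) \cap \U_2$, then checking soundness and completeness. The only cosmetic difference is that the paper keeps only the implications for $C_1$ minimal among the closed sets of $\cs_1$ excluded from some $\Ext(C_2) \colon \U_1$, an optimization that does not affect correctness.
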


\begin{proof}
	The only if part follows from Lemma \ref{lem:trad:hereditary}.
	To show the if part, we build an implicational base $\is$ with the acyclic 
	split 
	$(\U_1, 
	\U_2)$.
	Beforehand, we outline the main ideas:
	\begin{itemize}
		\item $\is$ should contain an implicational base for $\cs_2$ as it is an 
		ideal of 
		$\cs$;
		\item $\is$ should also include an implicational base for $\cs_1$ since it 
		is a 
		filter of $\cs$ and $\is$ must respect the split $(\U_1, \U_2)$;
		\item $\is$ must describe, for each $C_2 \in \cs_2$, which closed sets of 
		$\cs_1$ contribute to extensions of $C_2$ or not.
		The most direct way to express this relationship is to explicitly write it 
		in 
		$\is$ 
		by putting implications $C_1 \imp \cl(C_1) \cap \U_2$, if $C_1$ does not 
		participate in an extension of $C_2$.
	\end{itemize}
	Actually, we can readily optimize the last item.
	Indeed, since the property of not contributing to an extension is monotone, it 
	is 
	sufficient to put an implication $C_1 \imp \cl(C_1) \cap \U_2$ if $C_1$ is a 
	minimal 
	closed set of $\cs_1$ which does not yield an extension of $C_2$.
	
	With these ideas in mind, we proceed now to the proof.
	Let $\cs_1 = \ftr \U_2 \colon \U_1$ and $\cs_2 = \idl \U_2$.
	Observe that both $\cs_1$ and $\cs_2$ are closure systems.
	We aim to construct an implicational base $\is$ representing $\cs$ with 
	acyclic split 
	$(\U_1, \U_2)$.
	
	First, we prove that $\cs \subseteq \cs_1 \times \cs_2$.
	Let $C \in \cs$ and let $C_1 = C \cap \U_1$ and $C_2 = C \cap \U_2$.
	As $C$ and $\U_2$ are closed in $\cs$ we deduce that $C_2 \in \cs_2$ and hence 
	that 
	$C 
	\in \Ext(C_2)$.
	As $C_2 \subseteq \U_2$, we have $\Ext(C_2) \colon \U_1 \subseteq \Ext(\U_2) 
	\colon 
	\U_1$ with $\Ext(\U_2) \colon \U_1 = \cs_1$ by assumption.
	Hence $C_1 \in \cs_1$.
	We deduce that $\cs \subseteq \cs_1 \times \cs_2$.
	
	Now, let $\is[\U_1]$ be an implicational base for $\cs_1$, $\is[\U_2]$ an 
	implicational 
	base for $\cs_2$ and let 
	\[ \is[\U_1, \U_2] = \{C_1 \imp \cl(C_1) \cap \U_2 \mid C_1 \in \min(\cs_1 
	\setminus 
	\Ext(C_2) \colon \U_1) \text{ for some } C_2 \in \cs_2\}\]
	Finally we put $\is = \is[\U_1, \U_2] \cup \is[\U_1] \cup \is[\U_2]$.
	Clearly $(\U_1, \U_2)$ is an acyclic split for $\is$.
	We prove that $\is$ is an implicational base for $\cs$.
	Let $\cs_{\is}$ be the closure system associated to $\is$.
	
	To show that $\cs_{\is} \subseteq \cs$, we prove that $C \notin \cs$ entails 
	$C 
	\notin 
	\cs_{\is}$, for every $C \subseteq \U$.
	Let $C \subseteq \U$ such that $C \notin \cs$ and put $C_1 = C \cap \U_1$ and 
	$C_2 = 
	C 
	\cap \U_2$.
	First, assume that $C \notin \cs_1 \times \cs_2$. 
	Since $\cs \subseteq \cs_1 \times \cs_2$, $C \notin \cs$ readily holds.
	Then, $C_1 \notin \cs_1$ or $C_2 \notin \cs_2$ so that $C$ fails $\is[\U_1]$ 
	or 
	$\is[\U_2]$ and $C \notin \cs_{\is}$ holds.
	Now assume that $C \in \cs_1 \times \cs_2$ but $C \notin \cs$.
	By construction of $\cs$, we have that $C \notin \Ext(C_2)$, or equivalently, 
	$C_1 
	\notin 
	\Ext(C_2) \colon \U_1$.
	Let $C_1' \in \cs_1$ with $C_1' \subseteq C_1$ and  $C_1' \in \min(\cs_1 
	\setminus 
	\Ext(C_2) \colon \U_1))$.
	We show that $C$ fails the implication $C_1' \imp \cl(C_1') \cap \U_2$ of 
	$\is[\U_1, 
	\U_2]$.
	We have $\cl(C_1') \in \cs$ so that $\cl(C_1') \cap \U_2 \in \cs_2$ and $C_1' 
	\in 
	\Ext(\cl(C_1') \cap \U_2) \colon \U_1$.
	By assumption, for every closed set $C_2'' \in \cs_2$ such that $\cl(C_1') 
	\cap \U_2 
	\subseteq C_2''$, $\Ext(\cl(C_1) \cap \U_2) \colon \U_1 \subseteq \Ext(C_2'') 
	\colon 
	\U_1$.
	Therefore, $C_1' \notin \Ext(C_2) \colon \U_1$ implies that $\cl(C_1') \cap 
	\U_2 
	\nsubseteq C_2$.
	Consequently, $C_1' \subseteq C_1 \subseteq C$ but $\cl(C_1') \cap \U_2 
	\nsubseteq C 
	\cap 
	\U_2 = C_2$.
	We deduce that $C \notin \cs_{\is}$, and hence that $\cs_{\is} \subseteq \cs$.
	
	Now we demonstrate that $\cs \subseteq \cs_{\is}$.
	Let $C \in \cs$ and put $C_1 = C \cap \U_1$, $C_2 = C \cap \U_2$.
	Recall that $\cs_2 = \idl \U_2$ and that $\is[\U_2]$ is an implicational base 
	for 
	$\cs_2$.
	Therefore, $C_2 \in \cs_2$ and $C$ is a model of $\is[\U_2]$ since $C_2 
	\subseteq C$.
	Now, because $C_2 \subseteq \U_2$, we have $\Ext(C_2) \colon \U_1 \subseteq 
	\Ext(\U_2) 
	\colon \U_1 = \cs_1$ by assumption.
	Moreover, $\is[\U_1]$ is an implicational base for $\cs_1$.
	Consequently, we obtain that $C_1 \in \cs_1$ and hence that $C$ is a model for 
	$\is[\U_1]$.
	It remains to show that $C$ also models $\is[\U_1, \U_2]$.
	But this is clear as $C = \cl(C)$ and each implication $C_1 \imp \cl(C_1') 
	\cap \U_2$ 
	of 
	$\is[\U_1, \U_2]$ satisfies $\cl(C_1') \cap \U_2 \subseteq \cl(C_1')$ .
	Hence, $C_1' \subseteq C$ implies that $\cl(C_1') \subseteq C$.
	Consequently, $\cs \subseteq \cs_{\is}$ and $\cs = \cs_{\is}$ holds, 
	concluding the 
	proof.
\end{proof}

\begin{example}[Running example]
	The closure system $\cs_1$ associated to $\is[\U_1] = \{12 \imp 3\}$ is given 
	on the 
	left 
	of Figure \ref{fig:trad:over-ex-subcs}.
	On the right, we give $\cs_2$, the closure system of $\is[\U_2] =  \{4 \imp 6, 
	5 \imp 
	6\}$.
	
	\begin{figure}[h!]
		\centering
		\includegraphics[scale=0.9, page=3]{Figures/coeur.pdf}%
		\caption{The closure systems $\cs_1$ and $\cs_2$.}
		\label{fig:trad:over-ex-subcs}
	\end{figure}
	
	The construction of $\cs$ using extensions with respect to $\cs_1$ and $\cs_2$ 
	suggested 
	by Theorem \ref{thm:trad:charac-acyclic-split} is highlighted in Figure 
	\ref{fig:trad:over-ex-ext}.
	For instance, the extensions of $6$ are $\emptyset$ and $36$.
	Remark that $\emptyset$ and $3$ also contribute to the extensions $46$, $346$ 
	of $46$.
	Moreover, $346$ is a maximal extension of $46$, along with $246$.
	Finally, the extensions of $456$ (that is, $\U_2$) coincide with $\cs_1$.
	
	\begin{figure}[h!]
		\centering
		\includegraphics[scale=0.9, page=4]{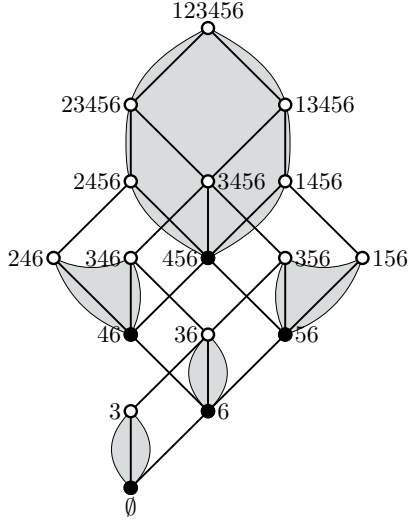}%
		\caption{The closure $\cs$ constructed from $\cs_1$ and $\cs_2$ (black 
		dots are 
			closed set of $\cs_2$).}
		\label{fig:trad:over-ex-ext}
	\end{figure}
	
\end{example}

In the particular case where $\cs$ is a direct product of $\cs_1, \cs_2$, the pair 
$(\U_1, \U_2)$ becomes a strong decomposition pair of \cite{libkin1993direct}.
It is worth noticing that Theorem \ref{thm:trad:charac-acyclic-split} hints a 
strategy to 
recursively compute the meet-irreducible elements of $\cs$.
This is the aim of the next subsection.

\subsection{The meet-irreducible elements of a closure system with acyclic split}
\label{subsec:trad:acyc-split-meet}

Now we use Theorem \ref{thm:trad:charac-acyclic-split} to obtain a recursive
expression of $\M$, the meet-irreducible elements of $\cs$ in terms of $\M_1$ and 
$\M_2$, 
the meet-irreducible elements of $\cs_1$ and $\cs_2$ respectively.
We prove that the decomposition of $\cs$ with extensions captures the 
structure of $\M$.
Again, we start from the case of the direct product.
This result has already been formulated in lattice theory, for instance in 
\cite{davey2002introduction}. 
For convenience, we rewrite it in our terms.

\begin{proposition}[\cite{davey2002introduction}, p. 119] \label{prop:trad:meet-direct}
	Let $\cs_1$ and $\cs_2$ be two closure systems over $\U_1$ and $\U_2$ (resp.) 
	where 
	$\U_1$ and $\U_2$ are disjoint.
	Let $\cs = \cs_1 \times \cs_2$.
	Then $\M = \{M_1 \cup \U_2 \mid M_1 \in \M_1\} \cup \{M_2 \cup \U_1 \mid M_2 
	\in \M_2\}$. 
\end{proposition}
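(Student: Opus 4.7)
The plan is to exploit the product structure of $\cs = \cs_1 \times \cs_2$ together with the key decomposition identity
\[ C_1 \cup C_2 = (C_1 \cup \U_2) \cap (\U_1 \cup C_2), \]
which is valid since $\U_1, \U_2$ are disjoint. Every closed set of $\cs$ has a unique representation as $C_1 \cup C_2$ with $C_1 \in \cs_1$, $C_2 \in \cs_2$, and intersections factor componentwise: $(D_1 \cup D_2) \cap (E_1 \cup E_2) = (D_1 \cap E_1) \cup (D_2 \cap E_2)$.

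For the inclusion $\supseteq$, I fix $M_1 \in \M_1$ and suppose $M_1 \cup \U_2 = D \cap E$ with $D, E \in \cs$. Writing $D = D_1 \cup D_2$ and $E = E_1 \cup E_2$, componentwise factoring gives $D_1 \cap E_1 = M_1$ in $\cs_1$ and $D_2 \cap E_2 = \U_2$ in $\cs_2$. Since $D_2, E_2 \subseteq \U_2$, the second equation forces $D_2 = E_2 = \U_2$; the first, together with $M_1 \in \M_1$, gives $D_1 = M_1$ or $E_1 = M_1$, hence $D = M_1 \cup \U_2$ or $E = M_1 \cup \U_2$. The case of $M_2 \cup \U_1$ is handled symmetrically.

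For the inclusion $\subseteq$, I take $C \in \M$ and write $C = C_1 \cup C_2$. The identity above expresses $C$ as the intersection of the two closed sets $C_1 \cup \U_2$ and $\U_1 \cup C_2$, so meet-irreducibility of $C$ forces $C_2 = \U_2$ or $C_1 = \U_1$. Assuming the former, if $C_1$ were not meet-irreducible in $\cs_1$, writing $C_1 = D_1 \cap E_1$ with $D_1, E_1 \neq C_1$ would produce $C = (D_1 \cup \U_2) \cap (E_1 \cup \U_2)$ with both factors in $\cs$ and distinct from $C$, contradicting $C \in \M$. Thus $C_1 \in \M_1$, so $C = M_1 \cup \U_2$ with $M_1 = C_1$; the case $C_1 = \U_1$ is symmetric.

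The whole argument is essentially mechanical once one has the identity above in hand; the only real point of attention is recognising that this identity lets any non-maximal pair $(C_1, C_2)$ be split non-trivially as an intersection, which is what drives both directions of the proof.
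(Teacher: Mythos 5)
Your proof is correct. Note that the paper itself gives no proof of this proposition --- it is simply quoted from Davey and Priestley \cite{davey2002introduction} --- so there is nothing internal to compare against; your argument, built on the identity $C_1 \cup C_2 = (C_1 \cup \U_2) \cap (\U_1 \cup C_2)$ and the componentwise factoring of intersections in a direct product, is the standard self-contained way to establish the result and all the steps (uniqueness of the decomposition $C = (C\cap\U_1)\cup(C\cap\U_2)$, forcing $D_2 = E_2 = \U_2$ in the $\supseteq$ direction, and the splitting of a reducible component in the $\subseteq$ direction) are sound.
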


\begin{figure}[h!]
	\centering 
	\includegraphics[scale=0.8, page=3]{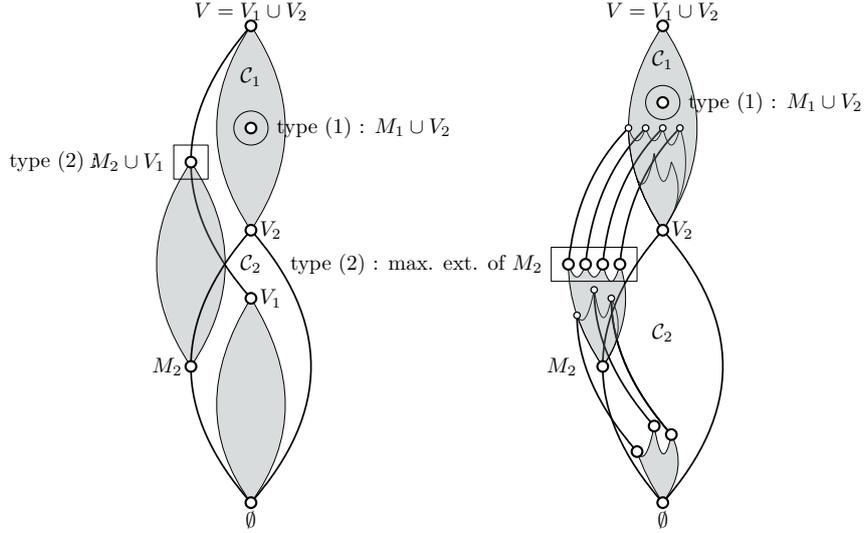}%
	\caption{Meet-irreducible elements of $\cs$ with an acyclic split: on the 
	left, the 
		direct product. On the right, the case of acyclic splits in general.}
	\label{fig:trad:over-meet}
\end{figure}

If we adopt the point of view of extensions with respect to $\cs_2$, as in the 
previous 
subsection, the meet-irreducible elements of $\cs_1 \times \cs_2$ can be 
partitioned into 
two classes:
\begin{itemize}
	\item[(1)] those belonging to extensions of $\U_2$, that is $\{M_1 \cup \U_2 
	\mid M_1 
	\in 
	\M_1\}$;
	\item[(2)] meet-irreducible elements of $\M_2$ which we extended with $\U_1$, 
	that is 
	$\{ M_2\cup \U_1 \mid M_2 \in \M_2 \}$.
	Observe that $M_2 \cup \U_1$ is the unique inclusion-wise maximal extension of 
	$M_2$, 
	for each 
	$M_2 \in \M_2$.
\end{itemize}
This construction is illustrated on the left part of Figure 
\ref{fig:trad:over-meet}.

We show next that when $\cs$ has an acyclic split $(\U_1, \U_2)$ but it is not the 
direct 
product of $\cs_1$ and $\cs_2$, the structure of $\M$ preserves this partitioning: 
\begin{itemize}
	\item[(1)] $\{M_1 \cup \U_2 \mid M_1 \in \M_1\}$ 
	remains unchanged;
	\item[(2)] $\{ M_2\cup \U_1 \mid M_2 \in \M_2 \}$ is adapted to replace 
	$M_2 \cup \U_1$ by the possible maximal extensions of elements of $\M_2$.
\end{itemize}
This construction is represented on the right of Figure \ref{fig:trad:over-meet}.
Let $\cs$ be a closure system with acyclic split $(\U_1, \U_2)$.
Again, let $\cs_1 = \ftr \U_2 \colon \U_1$ and $\cs_2 = \idl \U_2$.
We begin with the following two lemmas.

\begin{lemma} \label{lem:trad:meet-1}
	Let $\cs$ be a closure system over $\U$ with acyclic split $(\U_1, \U_2)$.
	Let $C_2 \in \cs_2, C_2 \neq \U_2$ and $C_1\in \cs_1$ such that $C_1\cup C_2$ 
	is a 
	non-maximal extension of $C_2$. Then $C_1\cup C_2 \notin \M$.
\end{lemma}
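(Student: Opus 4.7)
The plan is to show that $C = C_1 \cup C_2$ can be written as the intersection of two closed sets that both strictly contain $C$, thereby contradicting meet-irreducibility. To produce these two witnesses, I will exploit the two ``directions'' in which $C$ fails to be maximal: the $\U_1$-direction (from the non-maximality assumption) and the $\U_2$-direction (from $C_2 \neq \U_2$).

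First, by the hypothesis that $C_1 \cup C_2$ is a non-maximal extension of $C_2$, I pick $C_1' \in \cs_1$ with $C_1 \subsetneq C_1'$ and $C_1' \cup C_2 \in \cs$. Second, since $\cs_2 = \idl \U_2$ is a finite closure system with top element $\U_2$ and $C_2 \neq \U_2$, I pick some $C_2' \in \cs_2$ with $C_2 \subsetneq C_2'$ (for instance a cover of $C_2$ in $\cs_2$). Then I invoke Lemma~\ref{lem:trad:hereditary}: because $C_2 \subseteq C_2'$, we have $\Ext(C_2) \colon \U_1 \subseteq \Ext(C_2') \colon \U_1$, and since $C_1 \in \Ext(C_2) \colon \U_1$, this yields $C_1 \cup C_2' \in \cs$.

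The final step is to compute the intersection of the two closed sets $C_1' \cup C_2$ and $C_1 \cup C_2'$. Using $\U_1 \cap \U_2 = \emptyset$, the cross-terms vanish, and the intersection reduces to $(C_1 \cap C_1') \cup (C_2 \cap C_2') = C_1 \cup C_2 = C$. Since $C_1 \subsetneq C_1'$ and $C_2 \subsetneq C_2'$, both $C_1' \cup C_2$ and $C_1 \cup C_2'$ strictly contain $C$, so $C$ is not meet-irreducible.

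I do not expect a significant obstacle: the only delicate point is ensuring that $C_1 \cup C_2' \in \cs$, which is exactly what Lemma~\ref{lem:trad:hereditary} guarantees in the acyclic-split setting; without this hereditary property of extensions one could not produce the second witness, but here it is at hand.
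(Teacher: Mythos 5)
Your proof is correct and follows essentially the same route as the paper's: both arguments produce the two witnesses $C_1'\cup C_2$ (from non-maximality of the extension) and $C_1\cup C_2'$ (from $C_2\neq \U_2$ together with the hereditary property of extensions, Lemma~\ref{lem:trad:hereditary}). The only cosmetic difference is that you conclude by writing $C_1\cup C_2$ as the intersection of these two strictly larger closed sets, whereas the paper concludes by observing that $C_1\cup C_2$ has two distinct upper covers; these are equivalent ways of denying meet-irreducibility.
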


\begin{proof}
	Let $C_2 \in \cs_2, C_2 \neq \U_2$ and $C_1\in \cs_1$ such that $C_1\cup C_2$ 
	is a 
	non-maximal extension of $C_2$.
	As $C_2 \neq \U_2$, there exists at least one closed set $C_2' \in \cs_2$ such 
	that 
	$C_2 \prec C_2'$.
	By Corollary \ref{cor:trad:prec-hered} we have that $C_1\cup C_2 \prec C_1\cup 
	C_2'$ 
	in $\cs$.
	Furthermore, $C_1\cup C_2$ is not a maximal extension of $C_2$. 
	Therefore, there exists a closed set $C_1'$ in $\cs_1$ such that $C_1 \prec 
	C_1'$ 
	and $C_1' \cup C_2 \in \cs$.
	As $\cs \subseteq \cs_1 \times \cs_2$ by Theorem 
	\ref{thm:trad:charac-acyclic-split} 
	and extensions are increasing by Lemma \ref{lem:trad:hereditary}, it follows 
	that 
	$C_1 
	\cup 
	C_2 \prec C_1' \cup C_2$ in $\cs$ with $C_1\cup C_2' \neq C_1' \cup C_2$.
	Therefore, $C_1\cup C_2$ is not a meet-irreducible element of $\cs$.
\end{proof}

\begin{lemma} \label{lem:trad:meet-2}
	Let $\cs$ be a closure system over $\U$ with acyclic split $(\U_1, \U_2)$.
	Let $C_2 \in \cs_2$ such that $C_2 \neq \U_2$ and $C_2 \notin \M_2$. 
	Then $C \notin \M$ for every $C \in \Ext(C_2)$.
\end{lemma}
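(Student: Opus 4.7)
The plan is to show that every $C \in \Ext(C_2)$ admits at least two distinct upper covers in $\cs$, which directly contradicts meet-irreducibility.

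First, I would exploit the hypothesis $C_2 \notin \M_2$ together with $C_2 \neq \U_2$ to extract two distinct upper covers of $C_2$ inside $\cs_2$. Indeed, since $C_2$ is not meet-irreducible, there exist $D_1, D_2 \in \cs_2$ with $C_2 = D_1 \cap D_2$ and $C_2 \subsetneq D_i$ for $i \in \{1,2\}$. Pick $C_2', C_2'' \in \cs_2$ with $C_2 \prec C_2' \subseteq D_1$ and $C_2 \prec C_2'' \subseteq D_2$; if $C_2' = C_2''$ then $C_2' \subseteq D_1 \cap D_2 = C_2$, contradicting $C_2 \prec C_2'$. Hence $C_2'$ and $C_2''$ are two distinct covers of $C_2$ in $\cs_2$.

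Next, fix $C \in \Ext(C_2)$ and write $C = C_1 \cup C_2$ with $C_1 = C \cap \U_1$. By Theorem~\ref{thm:trad:charac-acyclic-split} we have $\cs \subseteq \cs_1 \times \cs_2$, so $C_1 \in \cs_1$. Applying Corollary~\ref{cor:trad:prec-hered} twice---once to $C_2 \prec C_2'$ and once to $C_2 \prec C_2''$---yields two cover relations in $\cs$:
\[
C_1 \cup C_2 \; \prec \; C_1 \cup C_2' \qquad \text{and} \qquad C_1 \cup C_2 \; \prec \; C_1 \cup C_2''.
\]
Finally, since $\U_1 \cap \U_2 = \emptyset$, intersecting with $\U_2$ gives $(C_1 \cup C_2') \cap \U_2 = C_2'$ and $(C_1 \cup C_2'') \cap \U_2 = C_2''$; as $C_2' \neq C_2''$, the two covers $C_1 \cup C_2'$ and $C_1 \cup C_2''$ are distinct. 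Therefore $C$ has at least two distinct upper covers and is not meet-irreducible.

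There is no real obstacle in this argument: once the correct upper covers of $C_2$ in $\cs_2$ are produced, Corollary~\ref{cor:trad:prec-hered} does the lifting work for us, and the acyclic-split assumption guarantees that covers in $\cs$ projected onto $\U_2$ remain distinct. The only subtle point is being careful with the convention on meet-irreducibility---this is precisely why the hypothesis $C_2 \neq \U_2$ is needed to secure the existence of (at least two) upper covers in $\cs_2$.
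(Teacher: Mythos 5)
Your proof is correct and follows essentially the same route as the paper: extract two distinct upper covers of $C_2$ in $\cs_2$ from $C_2 \notin \M_2$, lift them to covers of $C$ via Corollary~\ref{cor:trad:prec-hered}, and conclude. You merely spell out two details the paper leaves implicit (why non-meet-irreducibility yields two distinct covers, and why the lifted covers stay distinct), which is harmless.
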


\begin{proof}
	Let $C_2 \in \cs_2$ such that $C_2 \neq \U_2$ and $C_2 \notin \M_2$.
	Let $C \in \Ext(C_2)$ and $C_1 = C \cap \U_1$.
	As $C_2 \notin \M_2$, it has at least two covers $C_2', C_2''$ in $\cs_2$.
	By Corollary \ref{cor:trad:prec-hered}, it follows that both $C_2' \cup C_1$ 
	and 
	$C_2'' 
	\cup C_1$ are covers of $C$ in $\cs$.
	Hence $C \notin \M$.
\end{proof}

These lemmas suggest that meet-irreducible elements of $\cs$ arise from maximal 
extensions of meet-irreducible elements of $\cs_2$. 
They might also come from meet-irreducible extensions of $\U_2$ since $\Ext(\U_2) 
\colon 
\U_1 = \cs_1$.
These ideas are proved in the following theorem, which characterize the 
meet-irreducible 
elements $\M$ of $\cs$ according to the two types we described.

\begin{theorem} \label{thm:trad:split-meet}
	Let $\cs$ be a closure system over $\U$ with acyclic split $(\U_1, \U_2)$. 
	Let $\cs_1 = \ftr \U_2 \colon \U_1$ and $\cs_2 = \idl \U_2$.
	The meet-irreducible elements $\M$ of $\cs$ satisfy $\card{\M} \geq \card{\M_1} + 
	\card{\M_2}$ and are subject to the following equality:
	\[ \M = \{M_1 \cup \U_2  \mid M_1 \in \M_1 \} \cup \{C \in \max(\Ext(M_2)) 
	\mid M_2 
	\in 
	\M_2\} \]
\end{theorem}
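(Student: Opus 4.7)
My plan is to prove the two inclusions of the equality separately and then deduce the cardinality bound from the disjointness of the two families on the right-hand side. A preparatory observation, which I would establish first, describes the covers of an arbitrary closed set $C = C_1 \cup C_2 \in \cs$ with $C_1 \in \cs_1$ and $C_2 \in \cs_2$. Because $\cs \subseteq \cs_1 \times \cs_2$ by Theorem \ref{thm:trad:charac-acyclic-split}, any cover $C'$ of $C$ in $\cs$ decomposes as $C' = C_1' \cup C_2'$ with $C_1 \subseteq C_1'$ and $C_2 \subseteq C_2'$. Lemma \ref{lem:trad:hereditary} then yields $C_1 \cup C_2' \in \cs$, and this set is sandwiched between $C$ and $C'$, forcing $C_1' = C_1$ or $C_2' = C_2$. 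Combined with Corollary \ref{cor:trad:prec-hered}, the covers of $C$ in $\cs$ therefore split into two disjoint families: the \emph{horizontal} covers $C_1 \cup C_2'$ with $C_2 \prec C_2'$ in $\cs_2$, and the \emph{vertical} covers $C_1' \cup C_2$ with $C_1 \prec C_1'$ in $\cs_1$ and $C_1' \cup C_2 \in \cs$.

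For the $\supseteq$ inclusion I would treat the two families separately. If $M_1 \in \M_1$, then $M_1 \cup \U_2 \in \cs$ because $\Ext(\U_2) \colon \U_1 = \cs_1$; since $\U_2$ is the top of $\cs_2$ there are no horizontal covers, while the vertical covers are exactly $\{M_1' \cup \U_2 : M_1 \prec M_1' \text{ in } \cs_1\}$, which is a singleton since $M_1 \in \M_1$; hence $M_1 \cup \U_2 \in \M$. For $C = C_1 \cup M_2 \in \max(\Ext(M_2))$ with $M_2 \in \M_2$, maximality rules out every vertical cover, while Corollary \ref{cor:trad:prec-hered} puts the horizontal covers of $C$ in bijection with the covers of $M_2$ in $\cs_2$, of which there is precisely one. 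Thus $C \in \M$.

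For the reverse inclusion, take $M \in \M$ and write $M = C_1 \cup C_2$ with $C_1 \in \cs_1$ and $C_2 \in \cs_2$. If $C_2 \neq \U_2$, Lemma \ref{lem:trad:meet-2} forces $C_2 \in \M_2$, and Lemma \ref{lem:trad:meet-1} forces $M \in \max(\Ext(C_2))$, placing $M$ in the second family. If $C_2 = \U_2$, the cover analysis shows that the covers of $M$ in $\cs$ correspond bijectively to the covers of $C_1$ in $\cs_1$, so uniqueness of a cover of $M$ entails $C_1 \in \M_1$, placing $M$ in the first family. The two families are disjoint since elements of the first contain $\U_2$, whereas any $M_2 \in \M_2$ satisfies $M_2 \neq \U_2$ ($\U_2$ being the top of $\cs_2$ is not meet-irreducible under the paper's convention); combined with $M_2 \in \Ext(M_2)$, which guarantees $\max(\Ext(M_2)) \neq \emptyset$, this gives $\card{\M} \geq \card{\M_1} + \card{\M_2}$. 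The main technical step I anticipate is the cover analysis, which is needed to exclude covers that simultaneously increase both coordinates; once this is in place, the two lemmas of the subsection and Corollary \ref{cor:trad:prec-hered} do the rest.
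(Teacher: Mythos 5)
Your proposal is correct and follows essentially the same route as the paper: the reverse inclusion is obtained from the contrapositives of Lemma \ref{lem:trad:meet-1} and Lemma \ref{lem:trad:meet-2} with the same case split on whether $M \cap \U_2 = \U_2$, and the forward inclusion rests on Corollary \ref{cor:trad:prec-hered} together with the uniqueness of the upper cover of a meet-irreducible element. The only difference is presentational: you isolate the horizontal/vertical classification of covers as an explicit preliminary step, whereas the paper carries out that analysis inline (and is terser about the family $\{M_1 \cup \U_2 \mid M_1 \in \M_1\}$ and about the disjointness yielding the cardinality bound).
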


\begin{proof}
	First, $\{M_1 \cup \U_2 \mid M_1 \in \M_1 \} \subseteq \M$ follows from the 
	fact that 
	$\cs_1 = \ftr \U_2 \colon \U_1$.
	We prove that $\max(\Ext(M_2)) \subseteq \M$ for every $M_2 \in \M_2$.
	Let $M_2 \in \M_2$ and let $C$ be a maximal extension of $M_2$ with $C = C_1 
	\cup 
	M_2$.
	Since $M_2 \in \cs_2$, it has a unique cover $M_2'$ in $\cs_2$.
	By Corollary \ref{cor:trad:prec-hered}, we get $C \prec M_2' \cup C_1$ in 
	$\cs$.
	Let $C' \in \cs$ such that $C \subset C'$.
	Recall that $\cs \subseteq \cs_1 \times \cs_2$ follows from Theorem 
	\ref{thm:trad:charac-acyclic-split}, so that $C' \cap \U_1 \in \cs_1$ and $C' 
	\cap 
	\U_2 
	\in \cs_2$.
	Furthermore, $C \in \max(\Ext(M_2))$, therefore $C \subset C'$ implies that 
	$M_2 
	\subset 
	C' \cap \U_2$ and hence that $M_2' \subseteq C' \cap \U_2$ as $M_2 \in \cs_2$.
	Since $C_1\subseteq C' \cap \U_1$, we get $C \prec M_2' \cup C_1\subseteq C'$ 
	and 
	$C \in \M$ as it has a unique cover.
	
	Now we prove the other side of the equation.
	Let $M \in \M$.
	As $\cs \subseteq \cs_1 \times \cs_2$ since $(\U_1, \U_2)$ is an acyclic split 
	of 
	$\cs$, 
	$M \cap \U_2 \in \cs_2$ and we can distinguish two cases.
	Either $M \cap \U_2 = \U_2$ or $M \cap \U_2 \subset \U_2$.
	If $M \cap \U_2 = \U_2$ then $M$ is a meet-irreducible element of the closure 
	system 
	$\ftr \U_2$.
	Since $\ftr \U_2 \colon \U_1 = \cs_1$, we obtain that $M \cap \U_1 = M_1 \in 
	\M_1$.
	Now assume that $M \cap \U_2 \subset \U_2$.
	Let $M_1 = M \cap \U_1$ and $M_2 = M \cap \U_2$.
	Then by contrapositive of Lemma \ref{lem:trad:meet-1} we have that $M \in 
	\max(\Ext(M_2))$ as $M_2 \neq \U_2$.
	Similarly, we get $M_2 \in \M_2$ by Lemma \ref{lem:trad:meet-2}.
	The inequality $\card{\M} \geq \card{\M_1} + \card{\M_2}$ follows from the 
	description of 
	$\M$.
\end{proof}

\begin{example}[Running example]
	The meet-irreducible elements $\M_1$ of $\cs_1$ are $1$, $13$, $2$ and $23$.
	The meet-irreducible elements of $\cs_2$ are $\emptyset$, $46$ and 
	$56$.
	In Figure \ref{fig:trad:over-ex-meet} we highlight the two types of 
	meet-irreducible 
	elements of $\cs$, based on Theorem \ref{thm:trad:split-meet}.
	For instance $23456$ is of type (1) as it is obtained from the 
	meet-irreducible 
	element $23$ of $\cs_1$ and $\U_2$.
	Dually, $356$ is of type (2) because it is a maximal extension of the 
	meet-irreducible 
	element $56$ of $\cs_2$.
	
	\begin{figure}[h!]
		\centering
		\includegraphics[scale=0.8, page=5]{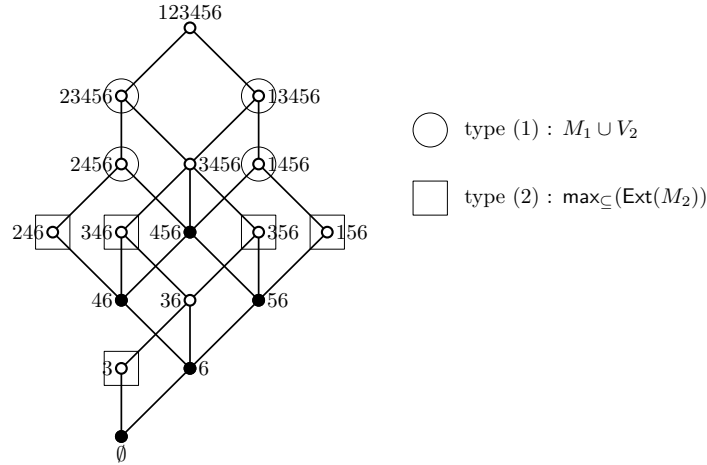}
		\caption{The two types of meet-irreducible elements in $\cs$ (black dots 
		are 
			closed 
			sets of $\cs_2$).}
		\label{fig:trad:over-ex-meet}
	\end{figure}
	
\end{example}

To conclude this section, we briefly discuss another characterization of acyclic 
splits based on Theorem \ref{thm:trad:charac-acyclic-split} and Theorem 
\ref{thm:trad:split-meet}.
Because extensions are hereditary, the extensions of $\M_2$ completely capture 
extensions of $\cs_2$.
In other words, if $C_2 \in \cs_2$ and $C_1$ contributes to an extension of $C_2$, 
then 
$C_1 \cup M_2$ is also an extension of $M_2$, for every $M_2 \in \M_2(C_2)$.
Therefore, $C_1 \cup C_2$ results from the intersection of the closed sets $M_2 
\cup C_1$, $M_2 \in \M_2(C_2)$.
We illustrate this idea in Figure \ref{fig:trad:cor-meet}. 


\begin{figure}[h!]
	\centering 
	\includegraphics[scale=0.9, page=1]{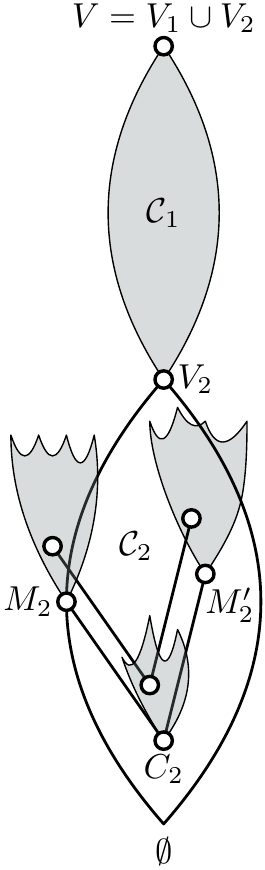}%
	\caption{Computing extensions of a closed set using extensions of 
	meet-irreducible 
		elements of $\cs_2$.}
	\label{fig:trad:cor-meet}
\end{figure}

\begin{corollary} \label{cor:trad:charac-acyclic-from-meet}
	Let $\cs$ be a closure system over $\U$ and $(\U_1, \U_2)$ a non-trivial 
	bipartition 
	of 
	$\U$ with $\U_2 \in \cs$.
	Let $\cs_1 = \ftr \U_2 \colon \U_1$ and $\cs_2 = \idl \U_2$.
	The pair $(\U_1, \U_2)$ is an acyclic split for $\cs$ if and only if for every 
	$C_2 
	\in 
	\cs_2$ and $C_2' \in \M_2(C_2) \cup \{\U_2\}$, $\Ext(C_2) \colon \U_1 
	\subseteq 
	\Ext(C_2') \colon \U_1$.
\end{corollary}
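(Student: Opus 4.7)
The plan is to reduce the corollary to Theorem~\ref{thm:trad:charac-acyclic-split} by showing that the apparently weaker monotonicity condition---restricted to $\M_2(C_2) \cup \{\U_2\}$---already implies the stronger one that holds for every pair $C_2 \subseteq C_2'$ in $\cs_2$. The only if direction is immediate: if $(\U_1, \U_2)$ is an acyclic split of $\cs$, Theorem~\ref{thm:trad:charac-acyclic-split} yields $\Ext(C_2) \colon \U_1 \subseteq \Ext(C_2') \colon \U_1$ for every $C_2 \subseteq C_2'$ in $\cs_2$, hence in particular for $C_2' \in \M_2(C_2) \cup \{\U_2\}$.

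For the if direction, I would fix arbitrary $C_2 \subseteq C_2'$ in $\cs_2$ and $C_1 \in \Ext(C_2) \colon \U_1$, aiming to show $C_1 \cup C_2' \in \cs$ so that the hypothesis of Theorem~\ref{thm:trad:charac-acyclic-split} is satisfied. The key fact I would invoke is the standard identity $C = \bigcap \M(C)$ recalled in the preliminaries, applied in $\cs_2$: for every non-top closed set, $C_2' = \bigcap \M_2(C_2')$. Each $M \in \M_2(C_2')$ then satisfies $C_2 \subseteq C_2' \subseteq M$, hence $M \in \M_2(C_2)$, and the hypothesis gives $C_1 \cup M \in \cs$. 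Since $C_1 \subseteq \U_1$ and each $M \subseteq \U_2$ are disjoint, an elementwise check yields the simplification
\[
\bigcap_{M \in \M_2(C_2')} (C_1 \cup M) \;=\; C_1 \cup \bigcap_{M \in \M_2(C_2')} M \;=\; C_1 \cup C_2',
\]
which belongs to $\cs$ by closure under intersection. Theorem~\ref{thm:trad:charac-acyclic-split} then concludes that $(\U_1, \U_2)$ is an acyclic split.

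The only subtlety is the edge case $C_2' = \U_2$, where $\M_2(C_2')$ is empty and the intersection argument degenerates. This is precisely why $\U_2$ is included explicitly alongside $\M_2(C_2)$ in the statement: in that case the hypothesis directly gives $C_1 \in \Ext(\U_2) \colon \U_1$, and no extra work is needed. I do not foresee a genuine obstacle; the proof is essentially an application of Theorem~\ref{thm:trad:charac-acyclic-split} together with the fact that meet-irreducible elements meet-generate the closure system.
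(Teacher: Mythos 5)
Your proposal is correct and follows essentially the same route as the paper: both directions reduce to Theorem~\ref{thm:trad:charac-acyclic-split}, and the if direction uses $\M_2(C_2') \subseteq \M_2(C_2)$ together with the identity $C_2' = \bigcap \M_2(C_2')$ and closure under intersection to recover $C_1 \cup C_2' \in \cs$, with the $C_2' = \U_2$ edge case handled by the explicit inclusion of $\U_2$ in the statement. No gaps.
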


\begin{proof}
	The only if part follows from Theorem \ref{thm:trad:charac-acyclic-split}.
	Let $C_2, C_2' \in \cs_2$ with $C_2 \subseteq C_2'$.
	If $C_2 = \U_2$ or $C_2' = \U_2$, the fact that $\Ext(C_2) \colon \U_1 
	\subseteq 
	\Ext(C_2') \colon \U_1$ is clear.
	Assume that $C_2 \subseteq C_2' \subset \U_2$ so that $\M_2(C_2)$ and 
	$\M_2(C_2')$ 
	are 
	not empty.
	From $C_2 \subseteq C_2'$, we deduce $\M_2(C_2') \subseteq \M_2(C_2)$.
	Let $C \in \Ext(C_2)$ with $C_1 = C \cap \U_1$.
	Remark that $C_1 \in \cs_1$ holds by assumption.
	Moreover, for every $M_2 \in \M_2(C_2)$, we have $C_1 \cup M_2 \in \Ext(M_2)$.
	This holds in particular for every $M_2 \in \M_2(C_2')$ so that $\bigcap_{M_2 
	\in 
		\M_2(C_2')} (M_2 \cup C_1) = (\bigcap_{M_2 \in \M_2(C_2')} M_2) \cup C_1 = 
		C_2' 
	\cup C_1 
	\in \cs$.
	Consequently, $C_1 \cup C_2' \in \Ext(C_2')$ holds, concluding the proof.
\end{proof}

\subsection{Acyclic splits and \csmc{CCM}}
\label{subsec:trad:trad}

We apply Theorem \ref{thm:trad:split-meet} to the problem \csmc{CCM}.
Let $\cs$ be a closure system over $\U$ and $\is$ be an implicational base for 
$\cs$.
We assume that $\is$ has an acyclic split $(\U_1, \U_2)$.
According to Theorem \ref{thm:trad:split-meet}, computing $\M$ from $\M_1$ and 
$\M_2$ 
requires finding maximal extensions of every meet-irreducible element $M_2 \in 
\M_2$.

\Problem{Find Maximal Extensions (MaxExt)}
{A triple $\is[\U_1]$, $\is[\U_2]$, $\is[\U_1, \U_2]$ given by an acyclic split of 
an 
	implicational base $\is$, meet-irreducible elements $\M_1, \M_2$, and a closed 
	set 
	$C_2$ 
	of $\is[\U_2]$.}
{The maximal extensions of $C_2$ in $\cs$, \ie $\max(\Ext(C_2))$.}

This problem relates to the \emph{dualization in closure systems}.
Let $\cs$ be a closure system over $\U$ and $\Bm, \Bp$ two antichains of $\cs$.
We say that $\Bm$ and $\Bp$ are \emph{dual} in $\cs$ if $\idl \Bp \cup \ftr \Bm = 
\cs$ 
and $\idl \Bp \cap \ftr \Bm = \emptyset$.
The antichain $\Bp$ is referred to as the \emph{positive border}, while $\Bm$ is the 
\emph{negative border}.
Observe that $\Bp = \max(\{C \in \cs \mid C \notin \ftr \Bm\})$ and similarly $\Bm 
= 
\min(\{C \in \cs \mid C \notin \idl \Bp\})$ so that $\Bm$ is the unique negative 
border 
associated to $\Bp$, and vice-versa for $\Bp$.

We connect maximal extensions of a closed set with dualization.
Consider a closure system $\cs$ with acyclic split $(\U_1, \U_2)$.
Let $C_2 \in \cs_2$. 
Since $\Ext(C_2) \colon \U_1$ is an ideal of $\cs_1$, the antichain 
$\max(\Ext(C_2) 
\colon \U_1)$, we call it $\Bp$, has a dual antichain $\Bm$ 
in $\cs_1$.
We have $\Bm = \min(\cs_1 \setminus \Ext(C_2) \colon \U_1)$.
In words, $\Bm$ is the family of minimal closed sets of $C_1$ that are not 
participating 
in extensions of $C_2$.

\begin{proposition} \label{prop:trad:dual-ext}
	Let $\cs$ be a closure system over $\U$ with acyclic split $(\U_1, \U_2)$.
	Let $C_2 \in \cs_2$, and $C_1\in \cs_1$. Then, $C_1\in \Bm$ if and only if 
	$C_1 \in \min\{\cl_1(A) \mid A \imp b \in \is[\U_1, \U_2], b \notin C_2\}$.
\end{proposition}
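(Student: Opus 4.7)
The plan is to unwind the definition of $\Bm$ using the characterization of extensions from Proposition \ref{prop:trad:charac-ext}, reducing the problem to recognizing when a closed set $C_1 \in \cs_1$ fails to extend $C_2$. By definition, $\Bm = \min(\cs_1 \setminus \Ext(C_2) \colon \U_1)$, so it suffices to describe exactly the closed sets of $\cs_1$ that do not yield an extension of $C_2$.

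First, I would show the following: a closed set $C_1 \in \cs_1$ lies in $\cs_1 \setminus \Ext(C_2) \colon \U_1$ if and only if there exists some implication $A \imp b \in \is[\U_1, \U_2]$ with $\cl_1(A) \subseteq C_1$ and $b \notin C_2$. The backward direction is immediate from Proposition \ref{prop:trad:charac-ext}: since $A \subseteq \cl_1(A) \subseteq C_1$ but $b \notin C_2$, the pair $C_1 \cup C_2$ violates the extension condition. For the forward direction, if $C_1 \cup C_2 \notin \cs$, again by Proposition \ref{prop:trad:charac-ext} there must exist $A \imp b \in \is[\U_1, \U_2]$ with $A \subseteq C_1$ and $b \notin C_2$; since $C_1$ is closed in $\cs_1$ and $A \subseteq C_1 \subseteq \U_1$, monotonicity and idempotence of $\cl_1$ give $\cl_1(A) \subseteq C_1$.

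Next, I would observe that for each implication $A \imp b \in \is[\U_1, \U_2]$ with $b \notin C_2$, the closed set $\cl_1(A)$ itself belongs to $\cs_1 \setminus \Ext(C_2) \colon \U_1$, because $A \subseteq \cl_1(A)$ and $b \notin C_2$ witnesses failure of the extension criterion. Thus the set
\[
  F := \{\cl_1(A) \mid A \imp b \in \is[\U_1, \U_2],\ b \notin C_2\}
\]
is contained in $\cs_1 \setminus \Ext(C_2) \colon \U_1$, and from the equivalence above, every element of $\cs_1 \setminus \Ext(C_2) \colon \U_1$ contains some element of $F$. In other words, $\cs_1 \setminus \Ext(C_2) \colon \U_1$ is precisely the filter of $\cs_1$ generated by $F$.

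Finally, taking minimal elements on both sides yields $\Bm = \min(F)$, which is exactly the claim. The only subtlety worth checking is that $\min$ of a filter of $\cs_1$ generated by a family $F \subseteq \cs_1$ equals $\min(F)$: any element of $\min(F)$ is already in the filter and minimal there, and conversely any minimal element of the filter is itself in $F$ (otherwise it would strictly contain some element of $F$, contradicting minimality). I do not foresee a real obstacle here; the argument is essentially bookkeeping once Proposition \ref{prop:trad:charac-ext} is invoked to translate extension failures into implications of $\is[\U_1, \U_2]$.
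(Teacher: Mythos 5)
Your proof is correct and follows essentially the same route as the paper's: both arguments reduce everything to Proposition~\ref{prop:trad:charac-ext} together with the observation that for a closed set $C_1 \in \cs_1$ one has $A \subseteq C_1$ if and only if $\cl_1(A) \subseteq C_1$. You merely repackage the minimality bookkeeping — identifying $\cs_1 \setminus \Ext(C_2) \colon \U_1$ as the filter of $\cs_1$ generated by $F$ and then taking minimal elements — where the paper argues the two inclusions directly by a case analysis on covers and on whether some $\cl_1(A)$ sits strictly below $C_1$; the content is the same.
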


\begin{proof}
	We show the if part.
	We denote by $\cl_1$ the closure operator associated to $\is[\U_1]$.
	Let $C_1\in \min\{\cl_1(A) \mid A \imp b \in \is[\U_1, \U_2], b\notin C_2\}$.
	We show that for any closed set $C_1' \subseteq C_1$ in $\cs_1$, $C_1'$ 
	contributes 
	to an extension of $C_2$.
	It is sufficient to show this property to the case where $C_1' \prec C_1$ as 
	$\Ext(C_2) \colon \U_1$ is an ideal of $\cs_1$ by Proposition 
	\ref{prop:trad:ideals}.
	Hence, consider a closed set $C_1'$ in $\cs_1$ such that $C_1' \prec C_1$.
	Note that such $C_1'$ exists since $\emptyset \in \cs_1$  and no implication 
	$A \imp 
	b$ 
	in $\is$ has $A = \emptyset$ so that $\emptyset \subset \cl_1(A)$ for any 
	implication 
	$A 
	\imp b$ of $\is[\U_1, \U_2]$ such that $b \notin C_2$.
	Then, by construction of $C_1'$, for any $A \imp b$ in $\is[\U_1, \U_2]$ such 
	that $b 
	\notin C_2$, we have $\cl_1(A) \nsubseteq C_1'$.
	As $\cl_1$ is a closure operator, it is monotone and
	$\cl_1(A) \nsubseteq \cl_1(C_1') = C_1'$ entails $A \nsubseteq C_1'$ for 
	any such implication $A \imp b$.
	Therefore $C_1' \in \Ext(C_2) \colon \U_1$ and $C_1\in \Bm$.
	
	We prove the only if part using contrapositive.
	Assume $C_1\notin \min\{\cl_1(A) \allowbreak \mid A \imp b\in 
	\is[\U_1, \U_2]$, $v \notin C_2\}$.
	We have two cases.
	First, for any implication $A \imp b$ in $\is[\U_1, \U_2]$ such that $b \notin 
	C_2$, 
	$\cl_1(A) \nsubseteq C_1$.
	Since $\cl_1$ is monotone and $C_1$ is closed in $\cs_1$, we have $A 
	\nsubseteq C_1$ 
	and 
	$C_1 \in \Ext(C_2) \colon \U_1$ by Lemma \ref{lem:trad:hereditary}.
	Hence $C_1\notin \Bm(C_2)$.
	In the second case, there is an implication $A \imp b$ with $b \notin C_2$ in 
	$\is[\U_1, 
	\U_2]$ such that $\cl_1(A) \subseteq C_1$ which implies $C_1\notin 
	\Ext(C_2) \colon \U_1$.
	If $\cl_1(A) \subset C_1$, then clearly $C_1\notin \Bm$ as $\cl_1(A)  
	\in \cs_1$ and $\cl_1(A) \notin \Ext(C_2) \colon \U_1$.
	Hence, assume that $C = \cl_1(A)$.
	Since $C_1 \notin \min\{\cl_1(A) \allowbreak \mid A \imp b \in 
	\is[\U_1, \U_2], b \notin C_2\}$ by hypothesis, there exists another 
	implication $A' 
	\imp 
	b'
	\in \is[\U_1, \U_2]$ such that $b' \notin C_2$ and $\cl_1(A') \subset C_1$.
	Hence $\cl_1(A') \notin \Ext(C_2) \colon \U_1$ and $C_1\notin \Bm$ as 
	it is not an inclusion-wise minimum closed set which does not belong to 
	$\Ext(C_2) 
	\colon \U_1$.
\end{proof}

We can build $\Bm$ in polynomial time from $\is$ using Proposition 
\ref{prop:trad:charac-ext} and $\is[\U_1, \U_2]$: we compute $\cl_1(A)$ for every 
implication $A \imp b$ in $\is[\U_1, \U_2]$ and we keep the closed sets (in 
$\cs_1$) that 
are inclusion-wise minimal.
Therefore, the problem \csmc{MaxExt} relates to the following generation version 
of 
dualization.

\Problem{Lower dualization in closure systems (LDual($\alpha$))}
{A representation $\alpha$ for a closure system $\cs$ over $\U$, an antichain 
$\Bm$ of 
	$\cs$}
{The antichain $\Bp$ dual to $\Bm$.}

When $\alpha$ is an implicational base $\is$ or the set of meet-irreducible 
elements 
$\M$, the problem \csmc{LDual}($\alpha$) is impossible to solve in 
output-polynomial 
time unless $\P = \NP$ \cite{babin2017dualization,defrain2020dualization}.
However, in \csmc{MaxExt} we have access to both $\is_1$ and $\M_1$ so that the 
version 
of \csmc{LDual} we have to consider is the one where $\alpha$ is both an 
implicational 
base and a set of meet-irreducible elements, that is \csmc{LDual}($\is, \M$).
This version of \csmc{LDual} is open, even if not harder than \csmc{SID} 
\cite{babin2017dualization}.
When $\is = \emptyset$, \ie when the closure system is Boolean, the problem 
reduces to 
hypergraph dualization.

Now, we describe an algorithm for solving \csmc{CCM} in the presence of acyclic 
splits.
First, we have $\card{\M} \geq \card{\M_1} + \card{\M_2}$ due to Theorem 
\ref{thm:trad:split-meet}.
Furthermore, each $M \in \M$ arise from a unique element of 
$M' \in \M_1 \cup \M_2$, and each $M' \in \M_1 \cup \M_2$ is used to construct at 
least 
one new meet-irreducible element $M \in \M$.
Therefore, the algorithm will output every meet-irreducible element only once.
Furthermore, the space needed to store intermediate solutions is bounded by the 
size of 
the output $\M$ which prevents an exponential blow up during the execution.

The algorithm proceeds as follows.
If $\is$ has no acyclic split, we use routines such as in
\cite{mannila1992design, beaudou2017algorithms} to compute $\M$.
When $\U$ is a singleton, the unique meet-irreducible to find is 
$\emptyset$ and hence no call to other algorithm is required.
Otherwise, we find an acyclic split $(\U_1, \U_2)$ of $\is$ and we recursively 
call the 
algorithm on $\is[\U_1]$ and $\is[\U_2]$.
Then, we compute $\M$ using $\is$, $\M_1$, $\M_2$ and by solving \csmc{MaxExt}.
Observe that it takes polynomial time in the size of $\is$ and $\U$ to compute an 
acyclic split, if it exists:
\begin{itemize}
	\item compute the premise-connected components of $\is$;
	\item construct a directed graph on these components, with an arc from a 
	component 
	$C_1$ to $C_2$ if there is an implication $A \imp b$ in $\is$ such that $A 
	\subseteq 
	C_1$ and $b \in C_2$;
	\item then, an acyclic split exists if and only if there are at least two 
	strongly 
	connected components, and each non-trivial bipartition of the strongly 
	connected 
	components will represent an acyclic split.
\end{itemize}
Thus, the algorithm \ctt{BuildTree} can be adapted to find a decomposition with 
acyclic splits or return \csf{FAIL} if not possible in polynomial time.

\begin{example}[Running example]
	First, we compute a decomposition of $\is$ in terms of acyclic splits.
	We obtain the $\is$-tree illustrated in Figure \ref{fig:trad:over-ex-tree}. 
	
	\begin{figure}[h!]
		\centering 
		\includegraphics[scale=1.0, page=2]{Figures/coeur.pdf}%
		\caption{The $\is$-tree of $\is$.}
		\label{fig:trad:over-ex-tree}	
	\end{figure}
	
	Then, we apply Theorem \ref{thm:trad:split-meet} bottom-up to construct the 
	the set $\M$ of meet-irreducible elements of $\cs$.
	This part is shown in  Figure \ref{fig:trad:over-ex-decomposition}.
	For readability, we highlighted at each step which closed sets are part of 
	$\cs_2$ 
	and 
	also the two types of meet-irreducible elements of Theorem 
	\ref{thm:trad:split-meet}.
	
	\begin{figure}[h!]
		\centering 
		\includegraphics[scale=0.7, page=6]{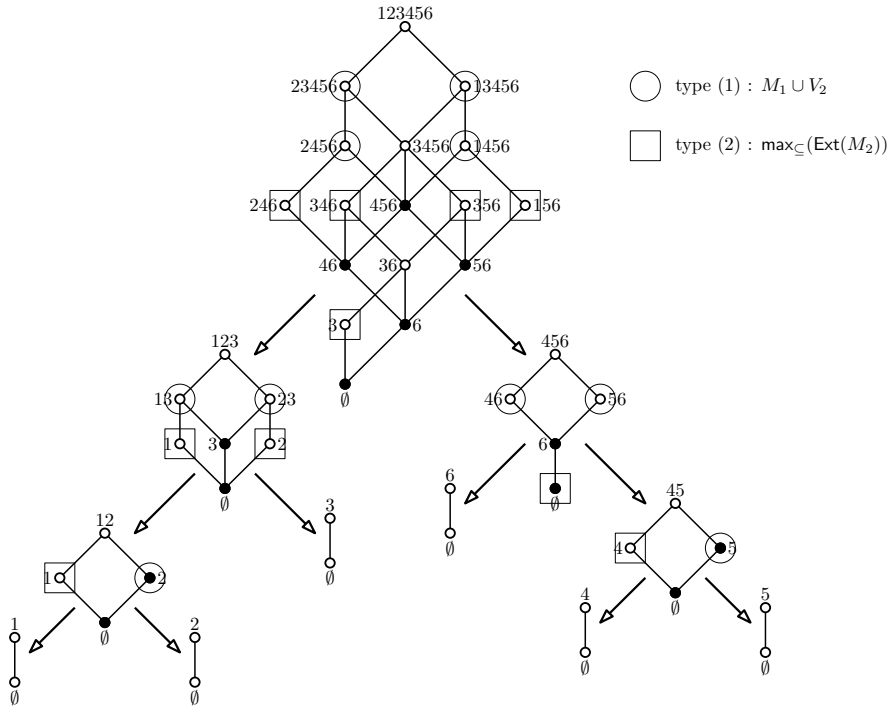}%
		\caption{Recursive computation of $\M$ using a decomposition by acyclic 
		splits.}
		\label{fig:trad:over-ex-decomposition}
	\end{figure}
	
\end{example}

To conclude, we derive a class of implicational bases where our strategy can be 
applied 
to obtain the meet-irreducible elements in output quasi-polynomial time.

\begin{theorem}
	Let $\is$ be an implicational base over $\U$.
	Assume there exists a full partition $\U_1, 
	\dots, \U_k$ of $\U$ such that for every implication $A \imp b \in \is$, $A 
	\subseteq 
	\U_i$ and $b \in \U_j$ for some $1 \leq i < j \leq k$.
	Then \csmc{CCM} can be solved in output-quasipolynomial time.
\end{theorem}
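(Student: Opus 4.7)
The plan is to apply the acyclic split decomposition iteratively, reducing \csmc{CCM} on $\is$ to a cascade of hypergraph dualization instances on Boolean sub-lattices. First I would verify that $(\U_1, \U_2 \cup \dots \cup \U_k)$ is an acyclic split of $\is$: the hypothesis $A \subseteq \U_i$, $b \in \U_j$, $i < j$ forces every implication to sit either entirely inside $\U_2 \cup \dots \cup \U_k$ (when $i \geq 2$), or to have its premise in $\U_1$ and its conclusion in $\U_2 \cup \dots \cup \U_k$ (when $i = 1$). In particular $\is[\U_1] = \emptyset$, so $\cs_1 = \pow{\U_1}$ is Boolean, and $\U_2 \cup \dots \cup \U_k$ is closed in $\cs$. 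Since $\is[\U_2 \cup \dots \cup \U_k]$ inherits the same ranked partition $\U_2, \dots, \U_k$, this split can be applied recursively, producing a chain of nested acyclic splits of depth $k$.

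Let $\cs^{(i)}$ denote the closure system of $\is[\U_i \cup \dots \cup \U_k]$, with meet-irreducibles $\M^{(i)}$, and write $\U_i^{+} := \U_{i+1} \cup \dots \cup \U_k$. The algorithm proceeds bottom-up. At the base $i = k$, $\cs^{(k)} = \pow{\U_k}$ gives $\M^{(k)} = \{\U_k \setminus \{u\} \mid u \in \U_k\}$. For $i < k$, \autoref{thm:trad:split-meet} applied to the acyclic split $(\U_i, \U_i^+)$ yields
\[
\M^{(i)} = \{(\U_i \setminus \{u\}) \cup \U_i^+ \mid u \in \U_i\} \;\cup\; \bigcup_{M \in \M^{(i+1)}} \max(\Ext(M)).
\]
Computing $\max(\Ext(M))$ for each $M \in \M^{(i+1)}$ is an instance of \csmc{MaxExt}. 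Since $\cs_1 = \pow{\U_i}$ is Boolean and its closure operator is the identity on subsets of $\U_i$, \autoref{prop:trad:dual-ext} produces the negative border $\Bm(M)$ directly as the inclusion-minimal elements of $\{A \mid A \imp b \in \is[\U_i, \U_i^+],\; b \notin M\}$. Dualizing $\Bm(M)$ to obtain $\Bp(M) = \max(\Ext(M) \colon \U_i)$ is then a pure hypergraph dualization, and the maximal extensions are $\{C_1 \cup M \mid C_1 \in \Bp(M)\}$.

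For the complexity, each hypergraph dualization runs in output-quasipolynomial time by the algorithm of Fredman and Khachiyan \cite{fredman1996complexity}. The key estimate will be that $|\M^{(i)}| \leq |\M|$ for every $i$. By \autoref{thm:trad:split-meet}, distinct $M, M' \in \M^{(i+1)}$ produce disjoint sets of maximal extensions in $\M^{(i)}$, since any $C \in \max(\Ext(M))$ satisfies $C \cap \U_i^+ = M$; and these are disjoint from the type-1 meet-irreducibles. Hence $|\M^{(i+1)}| \leq |\M^{(i)}|$, so $|\M^{(i)}| \leq |\M^{(1)}| = |\M|$ for every $i$. Across the $k \leq |\U|$ levels, the algorithm performs at most $|\U| \cdot |\M|$ dualizations, each on an input of size $O(|\is| + |\M|)$ with output at most $|\M|$. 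The total running time is therefore bounded by $|\U| \cdot |\M| \cdot (|\is| + |\M|)^{O(\log(|\is| + |\M|))}$, which is output-quasipolynomial in $|\U| + |\is| + |\M|$.

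The main obstacle is controlling the sizes of the intermediate families $\M^{(i)}$ so that repeated dualizations do not blow up the total cost: the size of an intermediate $\cs^{(i)}$ may itself be exponential in $|\M|$, so one must argue \emph{at the level of meet-irreducibles} that no intermediate family exceeds the final output. Once the injection $\M^{(i+1)} \hookrightarrow \M^{(i)}$ is in place, the rest of the proof is a direct combination of \autoref{thm:trad:split-meet}, \autoref{prop:trad:dual-ext}, and the Fredman--Khachiyan algorithm.
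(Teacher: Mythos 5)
Your proof is correct and follows essentially the same route as the paper's: decompose $\is$ by the $k-1$ acyclic splits with $\is[\U_i]=\emptyset$ on the left, observe that \csmc{MaxExt} over a Boolean $\cs_1$ is exactly hypergraph dualization, and invoke Fredman--Khachiyan. The only difference is that you spell out the bookkeeping the paper leaves implicit, in particular the injection $\M^{(i+1)}\hookrightarrow\M^{(i)}$ guaranteeing that intermediate families of meet-irreducibles never exceed $\card{\M}$, which is a worthwhile addition.
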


\begin{proof}
	Observe that $\is$ is acyclic in this case.
	Then, $\is$ can be hierarchically decomposed by $k - 1$ acyclic splits such 
	that the 
	implicational base on the left of the $i$-th split is $\is[\U_i] = \emptyset$ 
	and the 
	right-one $\is[\bigcup_{j > i} \U_j]$.
	Then, \csmc{MaxExt} reduces to hypergraph dualization, and
	we can compute $\M$ from $\is$ in output-quasipolynomial time using the 
	algorithm of 
	Fredman and Khachiyan \cite{fredman1996complexity}.
\end{proof}

The class of closure systems associated to these implicational bases generalizes 
both 
distributive closure systems and ranked convex geometries 
\cite{defrain2021translating} 
since an implicational base is ranked when it further satisfies the condition that 
$A 
\subseteq \U_i$ implies $b \in \U_{i + 1}$.

\section{Discussions and open problems}
\label{sec:trad:conc}

We conclude the paper with some discussions and open questions for future work.
Splits and more notably acyclic splits are decomposition methods based on the 
syntax of 
implications.
However, two equivalent implicational bases may not share the same (acyclic) 
splits.
In fact, it is even possible to find two equivalent implicational bases where one 
has an 
acyclic split, and not the other. 
This is demonstrated by the following example.

\begin{example}
	Let $\U = \{1, 2, 3, 4\}$ and $\is = \{1 \imp 4, 124 \imp 3, 3 \imp 4\}$.
	The unique possible split is $(124, 3)$ which is not acyclic.
	Observe that $\is$ is the Duquenne-Guigues base of the closure system.
	However, the implicational base $\is' = \{1 \imp 4, 12 \imp 3, 3 \imp 4\}$, 
	which is 
	clearly equivalent to $\is$ has an acyclic split being $(12, 34)$.
\end{example}

Note that the Duquenne-Guigues base is not of interest for finding acyclic splits 
as it 
can hide possible acyclic splits, as suggested by the previous example.
In fact, the example suggests considering only minimum implicational bases whose 
left-sides are as small as possible.
However, several such bases may exist and finding the right-one might be an 
expensive 
task, whence the following question.

\begin{question}
	Is it possible to decide whether a closure system has an acyclic split in 
	polynomial 
	time from an implicational base?
\end{question}

A similar question holds for the case of meet-irreducible elements:

\begin{question}
	Is it possible to recognize an acyclic split in polynomial time from a set of 
	meet-irreducible elements?
\end{question}

In Corollary \ref{cor:trad:charac-acyclic-from-meet}, we give a first step towards 
a 
characterization of acyclic splits from meet-irreducible elements.
The statement in Corollary \ref{cor:trad:charac-acyclic-from-meet} does consider 
the 
representation of closed sets by meet-irreducible elements.
Nonetheless, this characterization needs to be checked on every closed set of 
$\cs_2$.
In order to recognize an acyclic split from a set of meet-irreducible elements 
only, an 
idea would be to replace the statement by this one:
\begin{itemize}
	\item[] for every $M_2, M_2' \in \M_2$ such that $M_2 \subseteq M_2'$,
	$\Ext(M_2) \colon \U_1 \subseteq \Ext(M_2') \colon \U_1$.
\end{itemize}
Unfortunately, this latter condition is not sufficient, as demonstrated by the 
next 
example.

\begin{example}
	Let $\U_1 = \{4, 5\}$, $\U_2 = \{1, 2, 3\}$ and consider the closure systems 
	$\cs_1$ 
	and 
	$\cs_2$ given in Figure \ref{fig:trad:fail-base}.
	\begin{figure}[ht!]
		\centering 
		\includegraphics[scale=1.0, page=1]{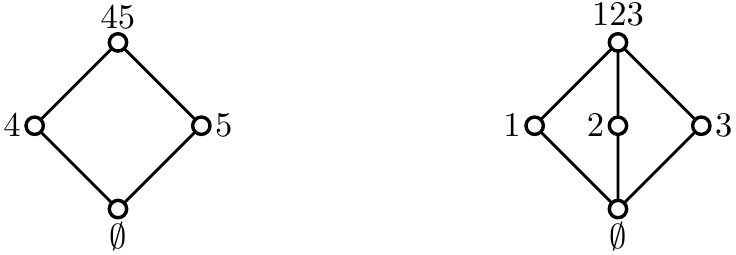}%
		\caption{The closure systems $\cs_1$ and $\cs_2$.}
		\label{fig:trad:fail-base}
	\end{figure}

	An implicational base for $\cs_1$ is $\is_1 = \emptyset$ and $\is_2 = \{12 
	\imp 3, 13 
	\imp 2, 23 \imp 1\}$ is an implicational base for $\cs_2$.
	We have $\M_1 = \{4, 5\}$ and $\M_2 = \{1, 2, 3\}$.
	Now let $\U = \U_1 \cup \U_2$ and consider the closure system $\cs$ of Figure 
	\ref{fig:trad:fail-cs} and the pair $(\U_1, \U_2)$.
	
	\begin{figure}[ht!]
		\centering 
		\includegraphics[scale=1.0, page=2]{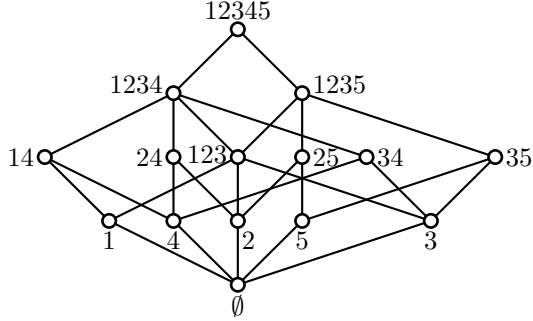}%
		\caption{The closure system $\cs$, failing Corollary 	
			\ref{cor:trad:charac-acyclic-from-meet}.}
		\label{fig:trad:fail-cs}
	\end{figure}
	
	We have $\M = \{1234, 1235 \} \cup \{14, 24, 25, 34, 35\}$.
	As $\M_2$ is an antichain, the condition given above is satisfied.
	However, Corollary \ref{cor:trad:charac-acyclic-from-meet} fails because 
	$\max(\Ext(\emptyset) \colon \U_1) \nsubseteq \Ext(1) \colon \U_1$.
	Hence, $(\U_1, \U_2)$ is not an acyclic split for $\cs$.
\end{example}

When $(\U_1, \U_2)$ is an acyclic split of $\cs$ and $\U_2$ is a singleton 
element, the 
construction of $\cs$ can be interpreted as the duplication of an ideal of $\cs_1$.
This puts the light on a possible link between (acyclic) splits and lower-bounded 
lattices \cite{freese1995free, adaricheva2013ordered}.
In particular, we know from \cite{adaricheva2013ordered} that the non-left-unit 
part of 
the $D$-base of a lower bounded lattice is acyclic.
As left-unit implications play no role in the existence of splits, there should 
exist a 
H-decomposition of the $D$-base by \emph{``almost acyclic''} splits.

\begin{example}
	Let $\U = \{1, 2, 3\}$ and $\is = \{12 \imp 3, 3 \imp 1\}$.
	The associated closure system is (lower) bounded.
	Its $D$-base is precisely $\is$.
	It has no acyclic split when we consider $3 \imp 1$, but it has a split $(12, 
	3)$ 
	which becomes acyclic once $3 \imp 1$ is removed.
\end{example}

Thus, we are naturally lead to the next question.

\begin{question}
	Can implicational bases of lower-bounded closure systems be characterized by 
	the 
	existence of a particular $\is$-tree? 
\end{question}

Answering this question would allow extending Theorem \ref{thm:trad:split-meet} to 
take 
into account unitary implications creating cycles.

\paragraph{Acknowledgments.} We are thankful to the reviewers for their helpful comments.
The second author is funded by the CNRS, France, ProFan project. 
This research is also supported by the French government IDEXISITE initiative 
16-IDEX-0001 (CAP 20-25).

\bibliographystyle{alpha}
\bibliography{biblio}

\end{document}